\documentclass[11pt,letterpaper]{amsart}

\usepackage[
    top=1in, bottom=1in, inner=1in, outer=1in,
    marginparwidth=2cm %necessary to avoid breaking fixme notes.
    ]{geometry}

\usepackage{amssymb,latexsym, amsmath, amsxtra, mathrsfs, bm}
\usepackage[dvips]{graphics}
\usepackage[T1]{fontenc}

\usepackage[all]{xy}
\usepackage{xcolor}
\usepackage{subcaption}
\usepackage{verbatim}
\usepackage[abs]{overpic}
\usepackage[hidelinks]{hyperref}
\usepackage{mathtools}
\usepackage{enumitem}

\DeclareMathAlphabet{\mathbbold}{U}{bbold}{m}{n}

\allowdisplaybreaks[3]

\makeatletter
\def\th@plain{%
	\thm@notefont{}% same as heading font
	\itshape % body font
}
\def\th@definition{%
	\thm@notefont{}% same as heading font
	\normalfont % body font
}
\makeatother
\theoremstyle{plain}
        \newtheorem{theorem}{Theorem}[section]
        \newtheorem*{theorem*}{Theorem}
        \newtheorem{thmx}{Theorem}

        \newtheorem{lemma}[theorem]{Lemma}
        \newtheorem{prop}[theorem]{Proposition}

\theoremstyle{definition}
        \newtheorem{definition}[theorem]{Definition}
        \newtheorem{rem}[theorem]{Remark}

\theoremstyle{remark}

        \newtheorem*{notation}{Notation}

\numberwithin{equation}{section}
\numberwithin{theorem}{section}
\numberwithin{table}{section}
\numberwithin{figure}{section}

\renewcommand{\le}{\leqslant}

\renewcommand{\ge}{\geqslant}

\newcommand{\supp}{\operatorname{supp}}

\newcommand{\R}{\mathbb{R}}
    
\newcommand{\C}{\mathbb{C}}      

\newcommand{\Q}{\mathbb{Q}}
\newcommand{\N}{\mathbb{N}}      
\newcommand{\Z}{\mathbb{Z}}      
\newcommand{\T}{\mathbb{T}}     % Torus

\providecommand{\abs}[1]{\lvert#1\rvert}
\providecommand{\Absbig}[1]{\bigl\lvert#1\bigr\rvert}

\providecommand{\norm}[1]{\|#1\|}
\providecommand{\Normbig}[1]{\bigl\|#1\bigr\|}
\providecommand{\Normbigg}[1]{\biggl\|#1\biggr\|}

\providecommand{\NormBigg}[1]{\Biggl\|#1\Biggr\|}
\renewcommand{\:}{\colon}

\newcommand{\I}{\mathbf{i}}
\newcommand{\wnorm}[1]{\norm{#1}_{A}}

\newcommand{\E} {\mathbf{E}}

\renewcommand{\=}{\coloneqq}

\newcommand{\wh}{\widehat}

\newcommand{\cB}{\mathcal{B}}

\newcommand{\cD}{\mathcal{D}}

\newcommand{\cF}{\mathcal{F}}

\newcommand{\cM}{\mathcal{M}}

\newcommand{\cO}{\mathcal{O}}

\newcommand{\oB}{\overline{B}}

\newcommand{\Var}{\operatorname{Var}}

\begin{document}

\title[Explicit exposure of Haar measure]
{Explicit exposure of Haar measure}

\makeatletter
% Remove \uppercase and \author@andify so \author prints its argument as-is
\renewcommand{\@setauthors}{%
  \begingroup
  \trivlist
  \centering\normalsize \@topsep30\p@\relax
  \advance\@topsep by -\baselineskip
  \item\relax
  \authors
  \vspace{14pt}
  \endtrivlist
  \endgroup
}
\makeatother

\author[Yixuan~Huang, Oliver~Jenkinson, Zhiqiang~Li, and Hang~Zhao]{%
  \begin{tabular}{c}Yixuan~Huang\\[-3pt]\Small Peking University
  \end{tabular}\;
  \begin{tabular}{c}Oliver~Jenkinson\\[-3pt]\Small Queen Mary University of London
  \end{tabular}\;
  \begin{tabular}{c}Zhiqiang~Li\\[-3pt]\Small Peking University  \end{tabular}\;
  \begin{tabular}{c}Hang~Zhao\\[-3pt]\Small Neijiang Normal University  \end{tabular}
}

\subjclass[2020]{Primary: 37A99; Secondary: 22D40, 37A05, 37A20, 37A25, 37A46, 37E10, 43A25.}

\keywords{Ergodic optimization, maximizing measure, Haar measure, Lebesgue measure, exposed point}

\begin{abstract}
We solve the problem of explicitly constructing a
continuous function whose unique maximizing measure for the doubling map is Lebesgue measure.
More generally, given a nontrivial compact metrizable abelian group and a continuous surjective endomorphism for which normalised Haar measure is ergodic, we explicitly construct a continuous function on the group for which Haar measure is the unique invariant maximizing measure.
The function is the uniform limit of a recursively defined sequence of
trigonometric polynomials with rational coefficients; every parameter of the recursion is
given by a closed formula, every step is exact, and the rate of convergence is explicit.
 In specific cases, we further obtain a uniformly convergent Fourier expansion in the classical frequency order, each of whose coefficients is rational and computable exactly, by a finite procedure.
\end{abstract}

\maketitle

\setcounter{tocdepth}{2}
\thispagestyle{empty}
\vspace{-18pt}
%\tableofcontents

\section{Introduction}

\subsection{Ergodic optimization and the inverse problem}
\label{ss:intro background}

For any ergodic measure $m$ for a topological dynamical system $T\:X\to X$ (a continuous self-map of a compact metric space $(X,d)$), it is known (see \cite{Je06b}) that there exist continuous functions $f\:X\to\R$ such that $\int\! f\, \mathrm{d}m$ is \emph{strictly} larger than
$\int\! f\, \mathrm{d}\mu$ for all other $T$-invariant probability measures $\mu$.
In other words, every ergodic measure is a unique $f$-maximizing measure for some continuous $f$.

If the support $\supp(m)$ carries no invariant measure other than $m$, then examples of such continuous $f$ can be readily found, by arranging that $f$ attains its maximum value precisely on $\supp(m)$, for example by setting $f(x)\= -d(x,\supp(m))$.
If $\supp(m)$ is not strictly ergodic, then the problem of determining those continuous $f$ whose unique maximizing measure is $m$ is more difficult.
To make the question more concrete, it was formulated as Problem 3.9 in \cite{Je06a} (see also the discussion around \cite[Corollary 1]{Je06b}) in the particular case of $T(x)=2x \pmod 1$ on the circle $X=\R/\Z$, and taking $m$ to be Lebesgue measure; in this particular form, the problem has been publicised during various subsequent lectures and problem sessions\footnote{For example, the problem was discussed at the 2004 CIRM Ecole Pluri-th\'ematique de Th\'eorie Ergodique
that led to \cite{Je06a}, 
and at problem sessions during the 2006 BIRS workshop on Measurable Dynamics: Theory \& Applications (\url{https://www.birs.ca/workshops/2006/06w5079/report06w5079.pdf}), and the 2011 BIRS workshop on Ergodic Optimization (\url{https://www.birs.ca/workshops/2011/11w5039/report11w5039.pdf}).}.

For hyperbolic systems such as the doubling map, it is known that any continuous function $f$ with Lebesgue measure $m$ as its unique maximizing measure is necessarily rather irregular: $f$ cannot be H\"older continuous, nor more generally Dini continuous
(i.e.,~satisfy $\int_0^{1/2} \omega_f(s)s^{-1} \, \mathrm{d}s< +\infty$ for modulus of continuity $\omega_f$), nor be in the class of Walters functions (cf.~\cite{Bou01, Wa78}). 
This is because, for functions $f$ in these regularity classes, it is known to be possible to modify $f$, by adding a continuous coboundary (which has zero integral for every invariant measure) such that 
the set of points at which the resulting function attains its maximum contains the support of every $f$-maximizing measure
(see e.g.~\cite{Bou00,Bou01,Bou11, CLT01, CG95, LT03, Sa99}).
Consequently, if a fully supported measure such as $m$ were $f$-maximizing then so would all other invariant measures be, thus precluding uniqueness. 

\subsection{Setting and main result}
\label{ss:intro main}

Throughout the article, $G$ denotes a nontrivial compact metrizable abelian group, $T\:G\to G$ a continuous surjective endomorphism, and $m_G$ normalised Haar measure on $G$; 
we assume throughout that
$m_G$ is ergodic for $T$.
We write $\N\=\{1,\,2,\,\ldots\}$, $\N_0\=\N\cup\{0\}$, and
$\T\=\R/\Z$ for the circle group; for $H$ a compact abelian group, $\wh H$ denotes its dual
(character) group, with further notation and basic facts recalled in
Section~\ref{s:separation}.

For $f\in C(G,\R)$, the quantity
\begin{equation*}
Q(T,f)\=\sup\biggl\{\int \!f\,\mathrm{d}\mu:
\mu\in\cM(G,T)\biggr\}
\end{equation*}
is called the \emph{maximum ergodic average} of $f$, where
$\cM(G,T)$ denotes the space of $T$-invariant Borel probability
measures on $G$. A measure $\mu\in\cM(G,T)$ is called
\emph{$f$-maximizing} if $\int\!f\,\mathrm{d}\mu=Q(T,f)$, and the
collection of all $f$-maximizing measures is denoted by
$\cM_{\max}(f)$.

Ergodic optimization studies $\cM_{\max}(f)$ for a prescribed
potential $f$. Here we consider the inverse problem: given
$\mu\in\cM(G,T)$, construct $f\in C(G,\R)$ such that
\begin{equation*}\cM_{\max}(f)=\{\mu\}.\end{equation*}
In this case, we say that $f$ \emph{exposes} $\mu$.

We first record three cases in which the sequence constructed in the
proof of Theorem~\ref{thm:main} below can, after relocating its
frequencies, be assembled into a classical Fourier expansion, in the
natural frequency order, with each coefficient determined exactly by a
finite procedure; see Section~\ref{s:Fourier series}. We begin with
expanding circle maps, where taking $d=2$ recovers
\cite[Problem~3.9]{Je06a}; see
Subsection~\ref{ss:effective Fourier construction}.

\begin{thmx}[Fourier exposure for expanding circle maps]
\label{thm:circle}
For an integer $d$ with $\abs{d}\ge 2$, let $T\colon\T\to\T$ be given by
$T(x)=d\, x\pmod 1$. Then there exists a function
$\widetilde F\in C(\T,\R)$ of the form
\begin{equation*}
\widetilde F(x)=\sum_{k=1}^{+\infty}c_k\cos(2\pi kx),
\qquad c_k\in\Q,
\end{equation*}
such that the Fourier partial sums converge uniformly to
$\widetilde F$ on $\T$, and Lebesgue measure is the unique
$T$-invariant maximizing measure for $\widetilde F$.
Moreover, each coefficient $c_k$ is determined exactly by a finite
procedure.
\end{thmx}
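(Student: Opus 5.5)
The plan is to construct $\widetilde F$ as a uniformly convergent sum of cosine polynomials $F=\sum_{n\ge1}P_n$, built recursively. Each $P_n$ is a trigonometric polynomial with rational coefficients and zero mean. Each new block kills the invariant measures that earlier blocks failed to separate from Lebesgue measure $m$.

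\textbf{Basic reduction.} Since $\int P_n\,\mathrm{d}m=0$, the target property becomes: $\int F\,\mathrm{d}\mu<0$ for every $\mu\in\cM(\T,T)\setminus\{m\}$. An invariant $\mu\ne m$ has some nonzero Fourier coefficient $\wh\mu(k)$ with $k\neq0$. By invariance, $\wh\mu(d^jk)=\wh\mu(k)$ for all $j$. Write $k=d^a r$ with $d\nmid r$. Then $\mu$ is detected on the orbit class of the primitive frequency $r$.

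\textbf{Building the blocks.} For each $N$, I would take $P_N$ to be a negative multiple of a Fej\'er-type averaged kernel along $T$-orbits. Concretely, set
\[
P_N=-\varepsilon_N\Bigl(\bigl|\tfrac1{M_N}\textstyle\sum_{|r|\le R_N}\sum_{j<M_N} e_{d^j r}\bigr|^2\text{-type nonnegative kernel}\Bigr)+(\text{its mean}),
\]
with the kernel chosen as follows. Its integral against any invariant $\mu$ is a sum $\sum \lambda_{r}\abs{\wh\mu(r)}^2$-like expression with nonnegative weights, which is positive unless $\wh\mu(r)=0$ for all $|r|\le R_N$. The workable version replaces the square by a positive-definite Fej\'er kernel $K_N\ge0$ with rational coefficients. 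One uses $\int K_N\circ(\text{ergodic averages})\,\mathrm{d}\mu\ge \int K_N\,\mathrm{d}m$, with strictness controlled by the low Fourier coefficients of $\mu$; this is where invariance is used via Birkhoff averaging of characters.

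The parameters $\varepsilon_N,R_N,M_N$ would be given by closed formulas so that $\sum\norm{P_N}_\infty<\infty$. This gives uniform convergence and continuity, and a strictly negative integral for every $\mu\neq m$, since some block sees a nonzero coefficient. By symmetrizing in $x\mapsto -x$ (which commutes with $T$), everything becomes a cosine series.

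\textbf{Fourier reordering.} The $P_N$ overlap in frequencies, so the coefficient $c_k$ is the sum of the $k$-th coefficients over all $N$. I would choose the frequency supports so that only finitely many $P_N$ contain a given $k$; for instance, require $\min\supp\wh P_N\to\infty$ through the choice of $R_N$ and the orbit offsets. Then $c_k\in\Q$ is a finite sum, computed by a finite procedure.

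\textbf{Main obstacle.} The hard step is uniform convergence of the \emph{classical} partial sums in natural frequency order. Summability of $\sum\norm{P_N}_\infty$ only gives convergence of the block sums. To handle the blocks, I would arrange that the frequency ranges of successive blocks are disjoint and increasing, so that each block occupies an interval $[A_N,B_N]$. Any partial sum $S_K$ then equals a finite sum of complete blocks plus one truncated block $S_K P_N$.

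It therefore suffices to have $\norm{S_K P_N}_\infty\le C\log(B_N)\norm{P_N}_\infty$, by the Lebesgue constant bound, together with $\sum_N \log(B_N)\norm{P_N}_\infty<\infty$. Because $B_N$ grows exponentially in the orbit length, $\log B_N\approx M_N\log\abs d$. So I would pick $\varepsilon_N$ decaying fast enough, e.g. $\varepsilon_N\lesssim 2^{-N}/(M_N\log\abs d)$, while keeping disjointness. The disjointness may conflict with needing each block to include all low primitive frequencies, and I expect this is the point requiring the real care.
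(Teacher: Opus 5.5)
There is a genuine gap: the blocks $P_N$ you describe cannot exist. You want each $P_N$ to be a trigonometric polynomial with $\int P_N\,\mathrm{d}m=0$ and $\int P_N\,\mathrm{d}\mu\le0$ for every $\mu\in\cM(\T,T)$, with strict inequality whenever some low coefficient $\wh\mu(r)$ is nonzero. This is exactly the obstruction recalled in the introduction.

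Suppose $\int P_N\,\mathrm{d}\mu\le0=\int P_N\,\mathrm{d}m$ for all invariant $\mu$. Then Lebesgue measure is $P_N$-maximizing. Since $P_N$ is Lipschitz, the Ma\~n\'e--Conze--Guivarc'h sub-action lemma gives a continuous $u$ with $P_N+u-u\circ T\le0$. Integrating against the fully supported $m$ forces $P_N+u-u\circ T\equiv0$, so $\int P_N\,\mathrm{d}\mu=0$ for \emph{every} invariant $\mu$. Your $F$ would then be invisible to all invariant measures.

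For your kernel concretely: Cauchy--Schwarz only gives $\int\abs{B_{r,N}}^2\,\mathrm{d}\mu\ge\abs{\wh\mu(r)}^2$. But $\int\abs{B_{r,N}}^2\,\mathrm{d}m=1/N$ and $\int\abs{B_{r,N}}^2\,\mathrm{d}\delta_0=1$, so by the argument above some invariant $\mu$ has $\int\abs{B_{r,N}}^2\,\mathrm{d}\mu<1/N$. Every detector therefore has a positive part on undetected measures; this is why Proposition~\ref{p:separation} only gives $S_\rho<\rho/4$. With summable amplitudes, the positive contributions $\sim\varepsilon_k\rho_k$ of earlier blocks can swamp the gap $\sim\varepsilon_n\rho_n$ of a later one, so adding detectors alone cannot work.

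The missing idea is the flattening step, Proposition~\ref{p:polynomial-flatten}. After adding $a_n\cD_n$, the paper subtracts $\cB_{d_n,\oB_n}(M_n^{-1}S_{M_n}H_n)$ minus its Haar mean. Three ingredients make this work: the coboundary identity $H-M^{-1}S_MH=U_{M,H}-U_{M,H}\circ T$, the bound $\norm{M^{-1}S_MH}_{L^2}\le M^{-1/2}\wnorm H$, and the Bernstein estimate. Together they push $Q(T,F_n)$ below $b_n=\delta_{n+1}/64$, while raising any invariant integral by at most $C_n<b_n$. So the next detector's gap $\delta_{n+1}/4$ beats the residual positivity. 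The tail $\norm{F-F_n}_\infty<\frac{16}{105}\delta_n$ also stays below the gap $\frac7{32}\delta_n$ already secured. No individual step is nonpositive on all invariant measures; only the maximum ergodic average is controlled.

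Your reordering argument has a related problem. The Lebesgue-constant route needs $\sum_N\norm{P_N}_\infty\log B_N<\infty$. Once the blocks carry a flattening correction, the degree explodes relative to the amplitude. In the paper's recursion, $\deg\Delta_n\ge\abs d^{M_n-1}$ with $M_n>(2\oB_n/b_n)^2\gtrsim b_n^{-2}$. Meanwhile $\norm{\Delta_n}_\infty\approx a_n$ and $b_n<a_{n+1}$, so $\norm{\Delta_n}_\infty\log\deg\Delta_n\to\infty$.

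The paper instead makes the truncation error independent of the degree. It splits $\Delta_n$ into $K_n=1+\lceil2^n\wnorm{\Delta_n}\rceil$ copies $K_n^{-1}\Delta_n\circ T^{J_{n,s}}$. Closed-form relocation times place these copies in disjoint, increasing frequency blocks. Composition with $T^J$ changes neither sup norms nor invariant integrals, and a cutoff inside one block costs at most that block's Wiener norm, which is $<2^{-n}$.

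Relocation also removes your worry about disjointness versus ``containing the low primitive frequencies''. Since $\wh\mu(d^Jr)=\wh\mu(r)$ for invariant $\mu$, a detector can be moved to arbitrarily high frequencies at no cost.
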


The same phenomenon persists in higher dimensions. Theorem~\ref{thm:main} below, stated for an arbitrary compact metrizable abelian group,
applies in particular to every toral endomorphism
$T_B(x)=Bx\pmod{\Z^m}$, $B\in M_m(\Z)$ with $\det B\neq0$, for which Lebesgue measure is ergodic, i.e., for which no eigenvalue of $B$ is a root of unity (cf.~\cite{Hal43, Roh49, SR88}). We record separately, as
Theorem~\ref{thm:torus}, the case of automorphisms
$B\in\operatorname{GL}(m,\Z)$, for which the coefficients can in addition be obtained explicitly and exactly, as in
Theorem~\ref{thm:circle}.

For Fourier series on $\T^m\=(\R/\Z)^m$, we use the standard square partial sums; see \cite[Definition~3.2.3, p.~184]{Gra14}. Namely, for
$F(x)\=\sum_{k\in\Z^m}c_k\exp(2\pi\I\langle k,x\rangle)$, and $R\in \N_0$, the $R$-th square partial sum is
\begin{equation}
\label{eq:T^d partial sum}
\Sigma_RF(x)\=\sum_{\norm{k}_\infty\le R}
c_k\exp(2\pi\I\langle k,x\rangle).
\end{equation}

\begin{thmx}[Fourier exposure for ergodic toral automorphisms]
\label{thm:torus}
For an integer $m\ge2$, let
$B\in\operatorname{GL}(m,\Z)$ and suppose that
the toral automorphism $T_B\colon\T^m\to\T^m$ given by $T_B(x)=Bx\pmod{\Z^m}$ is ergodic with respect to Lebesgue measure.
Then there exists a function $\widetilde F\in C(\T^m,\R)$ of the form
\begin{equation*}
\widetilde F(x)=
\sum_{k\in\Z^m\smallsetminus\{0\}}
c_k\exp(2\pi\I\langle k,x\rangle),
\qquad c_k\in\Q,
\end{equation*}
such that the square partial sums of the Fourier series converge
uniformly to $\widetilde F$ on $\T^m$, and Lebesgue measure is the unique $T_B$-invariant maximizing measure for $\widetilde F$.
Moreover, each coefficient $c_k$ is determined exactly by a finite
procedure.
\end{thmx}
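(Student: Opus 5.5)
Theorem~\ref{thm:torus} will be derived from the general construction behind Theorem~\ref{thm:main}, specialised to $G=\T^m$ and $T=T_B$. We then reorganise the resulting series into square partial sums. That construction yields $F=\sum_{n}P_n$. Each $P_n$ is a real trigonometric polynomial with rational coefficients and zero constant term, the sum converges uniformly at an explicit rate, and Lebesgue measure is the unique maximizing measure. The idea of the construction is the following. Lebesgue measure $m$ is characterised among invariant measures by $\hat\mu(k)=0$ for all $k\ne0$. Each block $P_n$ penalises deviation of $\hat\mu$ from zero on a finite $B^{T}$-invariant-up-to-orbit set of frequencies. Because $B\in\operatorname{GL}(m,\Z)$ and ergodicity holds, every $B^{T}$-orbit in $\Z^m\smallsetminus\{0\}$ is infinite. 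This lets us subtract coboundaries $g-g\circ T_B$ to move frequencies along orbits without changing integrals against invariant measures.

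The key point for the automorphism case is that $B^T$ is a bijection of $\Z^m\smallsetminus\{0\}$. Relocating a frequency $k$ to $(B^{T})^jk$ by adding a coboundary is therefore exact and invertible, and each frequency is hit by at most one relocated term. The steps are as follows.
\begin{enumerate}
\item Take the blocks $P_n$ from Theorem~\ref{thm:main}, with explicit rational coefficients and explicit sup-norm bounds $\norm{P_n}_\infty\le\varepsilon_n$, $\sum\varepsilon_n<\infty$.
\item Apply frequency relocations along $B^T$-orbits so that the supports of distinct blocks lie in disjoint annuli $\{R_{n-1}<\norm{k}_\infty\le R_n\}$. This is possible because each orbit is infinite and escapes to infinity in $\norm{\cdot}_\infty$. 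Relocation preserves the integral against every invariant measure, so the maximizing set is unchanged. Relocation may enlarge sup norms, so we relocate by choosing the new block $\widetilde P_n=P_n\circ T_B^{j_n}$ for a suitable $j_n$. Composition with the automorphism preserves sup norms exactly, and $P_n-P_n\circ T_B^{j_n}$ is a coboundary.
\item Set $\widetilde F=\sum_n\widetilde P_n$. Its Fourier coefficients are those of the unique block whose annulus contains $k$, and these are rational and computable by a finite search over $n$, since $R_n$ is explicit.
\end{enumerate}

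It remains to prove uniform convergence of the square partial sums $\Sigma_R\widetilde F$. For $R_{n-1}\le R< R_n$ we have $\Sigma_R\widetilde F=\sum_{i<n}\widetilde P_i+\Sigma_R\widetilde P_n$. So it suffices to show that $\norm{\Sigma_R\widetilde P_n}_\infty\to0$ uniformly in $R$. This is the main obstacle. Partial sums of a single block can be large, of order the Lebesgue constant $(\log N_n)^m$ times $\norm{P_n}_\infty$, or worse when the block's support is skewed by $B^{T}$. I would handle it by choosing the exponents $j_n$ and block sizes so that each block's support lies in a thin shell relative to square truncation. Then either only a controlled number of sub-pieces are cut by the truncation, or we further split $P_n$ into its $B^T$-orbit pieces and interleave them. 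Each piece is of the form $c(\cos 2\pi\langle k,x\rangle)$ with explicit $\abs{c}$, so partial sums of a block are bounded by the $\ell^1$ norm of its coefficients. The fallback is therefore to require, in the construction, the stronger summable bound $\sum_n\norm{\wh{P_n}}_{\ell^1}<\infty$ rather than a sup-norm bound. This gives absolute convergence and settles uniform convergence of the square partial sums outright. I expect verifying that the construction of Theorem~\ref{thm:main} can be arranged with $\ell^1$-summable coefficients, while still making Lebesgue measure the unique maximizing measure, to be the genuinely delicate point. Absolute convergence would make $\widetilde F$ lie in the Wiener algebra, and one must check this is compatible with the non-Dini irregularity that any such exposing function must have.
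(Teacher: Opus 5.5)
\textbf{There is a genuine gap: the uniform convergence of the square partial sums is not proved.} Your relocation step is the same as the paper's: you compose each increment with a power of $T_B$ so that it lands in a fresh $\ell^\infty$-shell, which changes neither sup norms nor invariant integrals. But you leave open the step you yourself flag as the main obstacle, and that is where the real work lies. With a single relocated copy of each increment $\Delta_n=F_n-F_{n-1}$, a square truncation that cuts through this copy can cost as much as $\wnorm{\Delta_n}$. Composition with $T_B^{j}$ preserves the Wiener norm, so no choice of $j_n$ or of shell thickness helps. Nothing makes these Wiener norms small or summable; the only available bound, $B_n$, carries a factor $2^{d_n}$. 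Splitting $\Delta_n$ into orbit or frequency pieces and interleaving them fails for the same reason. A sub-sum of the Fourier series of $\Delta_n$ is controlled only by $\wnorm{\Delta_n}$, not by $\alpha_n=\norm{\Delta_n}_\infty$, and $\alpha_n$ is the only summable quantity you have.

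The missing idea is to split by \emph{amplitude} rather than by frequency. Replace $\Delta_n$ by the $K_n=1+\lceil 2^n\wnorm{\Delta_n}\rceil$ functions $K_n^{-1}\Delta_n\circ T_B^{J_{n,s}}$, $1\le s\le K_n$, placed in successive disjoint shells. Each is a complete scaled copy of $\Delta_n$. Hence their sum has the same invariant integrals as $\Delta_n$, any collection of them has sup norm at most $\alpha_n$, and each has Wiener norm below $2^{-n}$. A square truncation cuts at most one copy, so $\norm{\widetilde F-\Sigma_R\widetilde F}_\infty\le 2^{-n}+\alpha_n+\sum_{j>n}\alpha_j$. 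This tends to $0$ using only the bound $\alpha_n<\frac87a_n$ of Proposition~\ref{p:increment estimate}.

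Your fallback of arranging $\sum_n\wnorm{P_n}<\infty$ cannot come out of the general construction. That construction also covers $T(x)=2x$ on $\T$, where no function with absolutely convergent Fourier series exposes Lebesgue measure $m$. Here is why. Take such an $f$ with $\wh f(0)=0$, and for each odd $k$ sum $\wh f(2^jk)$ over $j\ge0$. This produces a continuous $g=\sum_{k\text{ odd}}c_k\exp(2\pi\I kx)$ with $\int g\,\mathrm{d}\nu=\int f\,\mathrm{d}\nu$ for every invariant $\nu$, and $g(x+\frac12)=-g(x)$. If $g\equiv0$, every invariant measure is $f$-maximizing. Otherwise put $G=\frac12+g/(4\norm{g}_\infty)$, so that $G(z)+G(z+\frac12)=1$, and let $\mathcal{L}h(y)=\sum_{2z=y}G(z)h(z)$. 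A fixed point $\nu$ of $\mathcal{L}^*$ is $T$-invariant and satisfies $\int g\,\mathrm{d}\nu=\int\mathcal{L}g\,\mathrm{d}\nu=\int(2G-1)g\,\mathrm{d}\nu\ge0$. Moreover $\nu\neq m$, since $\mathcal{L}^*m=2G\,m$ is equal to $m$ only when $G\equiv\frac12$. So $m$ is either not maximizing or not the unique maximizing measure. For toral automorphisms you give no argument that a Wiener-algebra exposing function exists, so the fallback does not close the gap either.
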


The construction applies equally to symbolic dynamics, for any
alphabet $\Z/k\Z$ and for either one-sided or two-sided shifts;
we present here
the case $X\=(\Z/2\Z)^{\N_0}$, i.e.,~the one-sided full shift on two symbols. 
We use the standard Walsh ordering of
$\wh X$; see \cite[p.~99]{Fol16}. For $k\in\N_0$, write
\begin{equation}
\label{eq:walsh ordering}
k=\sum_{j=0}^{+\infty}\varepsilon_j(k)2^j,\qquad
\varepsilon_j(k)\in\{0,\,1\}.
\end{equation}
For $x=(x_j)_{j\ge0}\in X$, the $k$-th Walsh function $w_k(x)\=
(-1)^{\sum_{j=0}^{+\infty}\varepsilon_j(k)x_j}$.
Then $\wh X=\{w_k:k\in\N_0\}$ and $w_0=1$.
The identification of the Walsh system with the character group of the dyadic group goes back to Fine~\cite{Fin49}.

\begin{thmx}[Fourier exposure for the one-sided full shift]
\label{thm:shift}
Let $T\colon X\to X$ be the one-sided
shift on $X\=(\Z/2\Z)^{\N_0}$. Then there exists a function $\widetilde F\in C(X,\R)$ of the form
\begin{equation*}
\widetilde F(x)=\sum_{k=1}^{+\infty}c_k w_k(x),
\qquad c_k\in\Q,
\end{equation*}
such that the partial sums of the Walsh--Fourier series converge uniformly to $\widetilde F$ on $X$, and the $(1/2,1/2)$-Bernoulli measure is the unique $T$-invariant maximizing measure for $\widetilde F$. Moreover, each coefficient $c_k$ is determined exactly by a finite procedure.
\end{thmx}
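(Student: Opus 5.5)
The plan is to derive Theorem~\ref{thm:shift} from the general construction of Theorem~\ref{thm:main}, specialised to $G=X=(\Z/2\Z)^{\N_0}$ with $T$ the shift. Here $m_G$ is the $(1/2,1/2)$-Bernoulli measure, which is mixing and hence ergodic, so the hypotheses are met. The dual group is $\wh X=\{w_k\}$, and the dual endomorphism acts by $w_k\circ T=w_{2k}$, since shifting the coordinates corresponds to doubling the index in the Walsh ordering \eqref{eq:walsh ordering}. Theorem~\ref{thm:main} supplies a recursively defined sequence of real trigonometric polynomials $F_n=\sum_{k\in S_n}a^{(n)}_k w_k$ with rational coefficients and finite frequency sets $S_n\subset\N$. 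These converge uniformly at an explicit rate to some $F$ that exposes $m_G$. The nonzero frequencies matter because Haar measure must integrate each $w_k$, $k\ge1$, to zero.

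The Walsh partial sums of the limit need not converge uniformly, so the first step is to relocate frequencies. We replace each added block $P_n=F_n-F_{n-1}$ by a cohomologous polynomial $\widetilde P_n=P_n+g_n-g_n\circ T$. Here $g_n$ is chosen so that every frequency $k$ is moved along its orbit $k\mapsto 2k$ (or backward to the odd part of $k$) into a dyadic window $[2^{N_n},2^{N_{n+1}})$ of block $n$. The choice of window fixes, for each Walsh frequency, the block it lives in. Adding a coboundary changes no integral $\int\cdot\,\mathrm d\mu$ for $\mu\in\cM(X,T)$. Hence if $\widetilde F=\sum_n \widetilde P_n$ converges uniformly, then $\widetilde F-F$ integrates to zero against every invariant measure. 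This follows because $\sum_n (g_n-g_n\circ T)$ converges uniformly, or at least its partial sums integrate to zero and pass to the uniform limit. Consequently $\cM_{\max}(\widetilde F)=\cM_{\max}(F)=\{m_G\}$. The coefficients stay rational and are computed finitely: $c_k$ is the coefficient of $w_k$ in the unique block whose window contains $k$, and determining it requires running the recursion only up to that block.

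The analytic step is to choose the windows $N_n$ growing fast enough that the blocks have disjoint Walsh supports arranged in increasing order. It remains to show that every Walsh partial sum $S_K\widetilde F$ is uniformly close to $\widetilde F$. For $2^{N_n}\le K<2^{N_{n+1}}$ we have
\[
S_K\widetilde F=\sum_{j<n}\widetilde P_j+S_K\widetilde P_n .
\]
So it suffices to control $\norm{S_K\widetilde P_n}_\infty$ uniformly in $K$ and to show this bound tends to zero. A useful fact is that the Walsh–Dirichlet kernel at dyadic $K=2^N$ is $2^N$ times the indicator of a cylinder, so $S_{2^N}$ is a conditional expectation with norm $1$. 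For general $K$, the operator $S_K$ decomposes into at most $N_{n+1}$ dyadic pieces, each of the form $w\cdot(\text{conditional expectation})$, giving $\norm{S_K P}_\infty\le N_{n+1}\norm{P}_\infty$. It therefore suffices to have $\norm{\widetilde P_n}_\infty\cdot N_{n+1}$ summable. We achieve this either by taking $\norm{\widetilde P_n}_\infty$ decreasing faster than the logarithm of the frequency range, exploiting the explicit convergence rate in Theorem~\ref{thm:main} by passing to a subsequence, or by placing each block inside a single dyadic window, where partial sums are controlled by conditional expectations with constant $1$.

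The main obstacle is controlling the sup norm of the relocated blocks $\widetilde P_n$: moving frequencies by coboundaries can increase $\norm{\cdot}_\infty$. The first idea to try is to shift the whole polynomial, $\widetilde P_n=P_n\circ T^{r_n}$, which is cohomologous to $P_n$ via $g=\sum_{i<r_n}P_n\circ T^i$ and has the same sup norm. This maps frequency $k$ to $2^{r_n}k$, but it multiplies only by a power of $2$ and does not compress the range. Each block $P_n$ may span a range $[1,M_n]$ wider than one dyadic window, so we accept the $\log$-loss bound above. The fix is to thin the sequence so that $\norm{P_n}_\infty\log_2 M_n\le 2^{-n}$, choosing $r_n$ so that the windows $[2^{r_n},2^{r_n}M_n]$ are disjoint and increasing. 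The convergence $\sum_n(g_n-g_n\circ T)$ need not hold uniformly. Instead, the partial sums $\sum_{n\le N}\widetilde P_n$ and $\sum_{n\le N}P_n$ have equal integrals against every invariant $\mu$, and both sides converge uniformly, which suffices.
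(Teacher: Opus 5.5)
There is a genuine gap, and it is in the uniform convergence of the Walsh partial sums. The rest of your reduction is sound: $m_X$ is the Bernoulli measure, $w_k\circ T=w_{2k}$, and shifting a whole block $P\mapsto P\circ T^{r}$ preserves its sup norm and all invariant integrals. Passing to the limit through the partial sums $\sum_{n\le N}\widetilde P_n$ is also exactly how the paper transfers exposure to $\widetilde F$.

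Write $\Sigma_L P=\sum_{k<L}\wh P(w_k)\,w_k$. Your bound $\norm{\Sigma_L P}_\infty\le(1+\log_2 M)\norm{P}_\infty$ for Walsh support in $[1,M]$ is essentially right. The problem is the condition it needs: that $\norm{\Delta_n}_\infty$ (your $P_n=F_n-F_{n-1}$) times the logarithm of its Walsh range tends to $0$. This is false for the sequence of Theorem~\ref{thm:main}.
\begin{itemize}
\item The sup norm is of order $a_n$. We have $\cD_n(e_G)=-(1-2^{-R(\rho_n)})(1-1/N(\rho_n))\le-21/32$ and $\norm{Q_n}_\infty<\frac27\delta_n\le\frac17a_n$, so $\norm{\Delta_n}_\infty>a_n/2$.
\item The Walsh range is enormous. $\Delta_n$ contains $-\cB_{d_n,\oB_n}(M_n^{-1}S_{M_n}H_n)$, with $\cB_{d_n,\oB_n}$ strictly increasing. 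Through $H_n\circ T^{M_n-1}$ this term depends on a coordinate $x_j$ with $j\ge M_n-1$. Hence $\Delta_n$ has Walsh frequencies $\ge2^{M_n-1}$, where $M_n$ is the Birkhoff length of \eqref{eq:Mn}.
\item The two scales are incomparable. By \eqref{eq:Mn}, $M_n\ge(2\oB_n/b_n)^2$, and $\oB_n\ge B_{n-1}\ge2^{d_{n-1}}\oB_{n-1}$ grows like a tower, whereas $a_n^{-1}=2^{O(n^2)}$.
\end{itemize}

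Thinning makes this worse, not better. The merged block $F_{n_j}-F_{n_{j-1}}$ still has sup norm above $a_{n_{j-1}+1}/3$, because its first increment dominates the rest. Its range, however, is that of $F_{n_j}$.

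Retuning the amplitudes does not help either. Flattening forces $M_n\gtrsim(\oB_n/b_n)^2$, while $b_n$ must be small compared with $a_{n+1}\rho_{n+1}<a_n$ to protect the next gap. This gives $a_nM_n\gtrsim\oB_n^2/a_n$.

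Your fallback also fails. For $P$ supported in one dyadic window $[2^N,2^{N+1})$ and $2^N<L<2^{N+1}$, one has $\Sigma_L P=w_{2^N}\,\Sigma_{L-2^N}(w_{2^N}P)$. This has the same logarithmic Lebesgue constant, not constant $1$. Moreover, squeezing a block into one window requires moving different frequencies by different amounts. Such moves control only the Wiener norm, not the sup norm.

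The missing idea is \emph{splitting}. The paper replaces $\Delta_n$ by $\widetilde\Delta_n=K_n^{-1}\sum_{s=1}^{K_n}\Delta_n\circ T^{J_{n,s}}$, with $K_n=1+\lceil2^n\wnorm{\Delta_n}\rceil$. Each copy sits in its own frequency block, and the blocks are pairwise disjoint and increasing. For the shift this is done by $J_{n,s+1}=J_{n,s}+e_n$, with $e_n$ a depth bound for $\Delta_n$. This has three consequences:
\begin{itemize}
\item $\widetilde\Delta_n$ has the same invariant integrals as $\Delta_n$, and $\norm{\widetilde\Delta_n}_\infty\le\alpha_n$.
\item A cutoff inside any single copy costs at most $\wnorm{\Delta_n}/K_n<2^{-n}$.
\item Hence every partial sum lies within $2^{-n}+\sum_{j\ge n}\alpha_j$ of $\widetilde F$.
\end{itemize}
With splitting, even your logarithmic bound would suffice, using about $2^n(1+\operatorname{depth}\Delta_n)$ copies. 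Without it, your argument would need a cutoff estimate for $\Delta_n$ far sharper than the worst-case one.
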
 

\begin{rem}
\label{rem:exact coefficients}
The coefficients in Theorems~\ref{thm:circle}, \ref{thm:torus}, and~\ref{thm:shift}
satisfy a statement different from, and in a precise sense stronger
than, the computability of $\widetilde F$. To say that $c_k$ is
\emph{determined exactly by a finite procedure} is to say that there is
an algorithm which, given $k$, returns the \emph{exact} value of $c_k$
after finitely many arithmetic operations, rather than an approximation
to it. Such a statement is meaningful only because each $c_k$ is
rational; no analogue can hold for the value $\widetilde F(x)$, which is
in general irrational even at rational $x$, and for which exact
computation is not a meaningful requirement. Exactness of the $c_k$,
together with the uniform convergence of the partial sums in the
prescribed order, implies in particular that $\widetilde F$ is
computable in the sense of Remark~\ref{rem: computable}
below: the series is summed directly, with every term exact and with an
explicit bound on the truncation error at each stage.
\end{rem}

The three theorems above are in fact special cases, obtained by relocating frequencies, of
a single construction that works for an arbitrary compact metrizable abelian group:

\begin{thmx}[Uniformly convergent trigonometric exposure of Haar measure]
\label{thm:main}
There is a sequence $\{F_n\}_{n\in\N_0}$ of real-valued trigonometric polynomials on $G$
with rational Fourier coefficients, produced by an explicit recursion whose parameters are given by closed formulas
in the step number alone, such that $\{F_n\}$ converges
uniformly to a function $F\in C(G,\R)$ for which the normalised Haar measure $m_G$ is the
unique $T$-invariant maximizing measure. More precisely, 
\begin{equation*}\int\!F\,\mathrm{d}m_G=0
\quad\text{and}\quad
\int\!F\,\mathrm{d}\nu<0
\quad\text{for all }
\nu\in\cM(G,T)\smallsetminus\{m_G\}.\end{equation*}
\end{thmx}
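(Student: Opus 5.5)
Since $m_G$ is ergodic, every nontrivial character $\chi\in\wh G$ has $\chi\circ T^j\neq\chi$ for all $j\ge1$. So $\wh G\smallsetminus\{1\}$ splits into infinite forward $T$-orbits $\chi,\chi\circ T,\chi\circ T^2,\dots$, and these orbits are pairwise disjoint or eventually merge. An invariant $\nu$ equals $m_G$ iff $\wh\nu(\chi)=0$ for every nontrivial $\chi$. Invariance gives $\wh\nu(\chi\circ T^j)=\wh\nu(\chi)$, so it suffices to test one character per orbit. Enumerate representatives of the countably many grand orbits as $\chi_1,\chi_2,\dots$ (the group is metrizable, so $\wh G$ is countable). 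The aim is
\begin{equation*}
F=\sum_n F_n-\text{type corrections},
\end{equation*}
with $\int F\,\mathrm{d}m_G=0$ and $\int F\,\mathrm{d}\nu\le -\sum_n a_n\abs{\wh\nu(\chi_n)}^2$ for suitable $a_n>0$. Since $\int F\,\mathrm{d}\nu$ is linear in $\nu$, this quadratic bound cannot hold literally. I will instead aim for a negative bound that is strict whenever some $\wh\nu(\chi_n)\neq0$.

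The linear substitute is the following. Take the real part of a long Birkhoff-type average $-\mathrm{Re}\,\frac1N\sum_{j<N}\chi\circ T^j$. It integrates to $-\mathrm{Re}\,\wh\nu(\chi)$ against any invariant $\nu$. This is only sign-sensitive, but it can be made to detect nonvanishing by also using rotated versions $\mathrm{Re}(\omega\chi)$ with rational $\omega$ on the unit circle (cos/sin combinations with rational coefficients). The key device will be a \emph{separation lemma}, presumably what Section~\ref{s:separation} supplies. For each finite stage $n$ it should give a trigonometric polynomial $P_n$, with rational coefficients and frequencies in the orbits of $\chi_1,\dots,\chi_n$, such that $\int P_n\,\mathrm{d}m_G=0$, $\norm{P_n}_\infty\le1$, and $\int P_n\,\mathrm{d}\nu\le -c_n\max_{k\le n}\abs{\wh\nu(\chi_k)}^{p}+\varepsilon_n$ for invariant $\nu$. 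This is achievable because for an invariant $\nu$ the vector $(\wh\nu(\chi_k\circ T^j))_j$ is constant along each orbit. One can then replace $\int\abs{\cdot}$ by $\int$ of a polynomial in characters applied to the orbit, for instance $-\abs{\frac1N\sum_j\chi\circ T^j}^2$. The latter is a trigonometric polynomial whose $\nu$-integral is close to $-\abs{\wh\nu(\chi)}^2$ when $\nu$ is ergodic, by the ergodic theorem. For non-ergodic $\nu$ I will use the ergodic decomposition: the integral is at most $-\abs{\wh\nu(\chi)}^2$ up to error, by Jensen.

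With this in hand I will set
\begin{equation*}
F_n=F_{n-1}+2^{-n}\Bigl(-\bigl|\tfrac1{N_n}\textstyle\sum_{j<N_n}\chi_n\circ T^j\bigr|^2+\tfrac1{N_n}\Bigr),
\end{equation*}
with the constant $1/N_n$ ensuring $\int F_n\,\mathrm{d}m_G=0$. Here $N_n$ is given by a closed formula such as $N_n=4^n$, and the coefficients are visibly rational. Uniform convergence is immediate from the factor $2^{-n}$. The limit $F$ has $\int F\,\mathrm{d}m_G=0$. For invariant $\nu$ and ergodic decomposition $\nu=\int\mu\,\mathrm{d}\tau(\mu)$, the integral $\int F\,\mathrm{d}\nu$ equals
\begin{equation*}
\sum_n2^{-n}\Bigl(\tfrac1{N_n}-\int\Bigl|\tfrac1{N_n}\textstyle\sum_j\chi_n\circ T^j\Bigr|^2\mathrm{d}\nu\Bigr).
\end{equation*}

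The main obstacle is sign control. The term $\int\abs{A_N\chi}^2\,\mathrm{d}\nu$ is $\ge\abs{\wh\nu(\chi)}^2$ by Cauchy--Schwarz, but it can be \emph{smaller} than $1/N$ for some $\nu\neq m_G$. Positive contributions $2^{-n}/N_n$ from stages with $\wh\nu(\chi_n)=0$ could then outweigh the negative ones. To handle this, I will compute $\int\abs{A_N\chi}^2\,\mathrm{d}\nu$ exactly as $\frac1{N^2}\sum_{i,j}\wh\nu(\chi^{\pm}\text{-differences})$ and exploit that the off-diagonal terms are Fourier coefficients of $\nu$ at other nontrivial characters. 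I will then arrange the enumeration so that the overall sum telescopes into $-\sum_n b_n\abs{\wh\nu(\psi_n)}^2$ plus nonpositive terms, where the $\psi_n$ exhaust $\wh G\smallsetminus\{1\}$. If that fails, I will instead choose $N_n$ growing fast enough and use that $\nu\neq m_G$ forces some $\wh\nu(\chi_k)\neq0$. The tail stages then have to be shown to contribute at most a controllable fraction, which is exactly where an explicit quantitative estimate in $n$ is needed.
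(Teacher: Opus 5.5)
\paragraph{A genuine gap: nothing in the proposal makes $\int F\,\mathrm{d}\nu$ negative.}
This is the point you yourself call the main obstacle. Your $F=\sum_n2^{-n}\bigl(N_n^{-1}-\abs{N_n^{-1}\sum_{j<N_n}\chi_n\circ T^j}^2\bigr)$ is exactly $\sum_n2^{-n}K_{r_n,N_n}$, with $\gamma_{r_n}=\chi_n$ and $K_{r,N}$ the single-frequency detectors of Definition~\ref{d:fourier detector}.

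The only per-term estimate you have is Lemma~\ref{lem:single detector}(ii), namely $\int K_{r,N}\,\mathrm{d}\nu\le N^{-1}-\abs{\wh\nu(\gamma_r)}^2$. Summing gives $\int F\,\mathrm{d}\nu\le\sum_n2^{-n}N_n^{-1}-\sum_n2^{-n}\abs{\wh\nu(\chi_n)}^2$. This is inconclusive whenever the second sum lies below the first. The second sum is weak$^*$ continuous and vanishes at $m_G$, while the first is a fixed positive constant. So the bound is inconclusive for every invariant $\nu\neq m_G$ close enough to $m_G$, however fast you let $N_n$ grow.

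Neither proposed remedy helps:
\begin{itemize}
\item \emph{Telescoping the cross terms.} The exact value is $\int K_{r,N}\,\mathrm{d}\nu=-N^{-2}\sum_{i\neq j}\wh\nu\bigl((A^i\gamma_r)(A^j\gamma_r)^{-1}\bigr)$. This depends \emph{linearly}, with either sign, on Fourier coefficients of $\nu$ at difference characters, which can lie in the orbits of other representatives $\chi_k$. Your gain at $\chi_k$ is only quadratic in $\abs{\wh\nu(\chi_k)}$, so there is no reason for the cross terms to organise into nonpositive quantities.
\item \emph{Ergodic-theorem approximation.} It is not uniform in $\nu$, so it cannot be used with $N_n$ fixed in advance.
\end{itemize}

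Structurally, each partial sum is a trigonometric polynomial with zero Haar mean, so its maximum ergodic average is $\ge0$. For the doubling map it is strictly positive unless the partial sum is a coboundary, by the Ma\~n\'e--Conze--Guivarc'h argument recalled in Subsection~\ref{ss:intro background}. Your scheme gives no control on the size of these maximum ergodic averages. Hence nothing shows that later detectors, whose gains are arbitrarily small, compensate for them.

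\paragraph{The missing ingredient.}
What is missing is quantitative control of $Q(T,\cdot)$ for each partial sum before the next detector is added. The paper achieves this in three steps.
\begin{itemize}
\item \emph{Detectors by scale, not by character.} The function $S_\rho$ satisfies $\sup_GS_\rho<\rho/4$, and $\int S_\rho\,\mathrm{d}\nu<-\rho/4$ \emph{uniformly} on the weak$^*$ compact set $\{D\ge\rho\}$ (Proposition~\ref{p:separation}).
\item \emph{Flattening} (Proposition~\ref{p:polynomial-flatten}). After forming $H_n=F_{n-1}+a_n\cD_n$, set $P=M_n^{-1}S_{M_n}H_n$.
  \begin{itemize}
  \item $H_n$ differs from $P$ by a coboundary (Lemma~\ref{lem:finite coboundary}).
  \item $\norm{P}_{L^2}\le M_n^{-1/2}\wnorm{H_n}$ is small (Lemma~\ref{lem:L2 estimate}).
  \item Hence $F_n=H_n+C_n-\cB_{d_n,\oB_n}(P)\le C_n+(\text{coboundary})$, so $Q(T,F_n)<b_n=\delta_{n+1}/64$.
  \item Meanwhile the Haar mean stays $0$, the coefficients stay rational, and no invariant integral rises by more than $C_n<b_n$.
  \end{itemize}
\item \emph{Fast-decaying amplitudes.} The choice $a_{n+1}=\delta_n/8$ gives $\norm{F-F_n}_\infty<\frac{16}{105}\delta_n$. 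This cannot undo the stage-$n$ gap $\int F_n\,\mathrm{d}\nu<-\frac7{32}\delta_n$ on $\{D\ge2^{-n}\}$.
\end{itemize}
Without the flattening step, or some substitute bounding the maximum ergodic averages of the partial sums, the argument does not close.

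\paragraph{A side correction.}
Linearity of $\nu\mapsto\int F\,\mathrm{d}\nu$ does not forbid a bound by a negative concave quantity such as $-\sum_na_n\abs{\wh\nu(\chi_n)}^2$. The paper's final estimate is of exactly this type, $\int F\,\mathrm{d}\nu<-\phi(D(\nu))$.
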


\begin{rem}\label{rem: computable}

We use \emph{computable} in the sense of computable analysis
(see e.g.~\cite{PER89}). Informally, a real number is computable if
an algorithm can produce a rational approximation to it within any
prescribed error, and a function is computable if a single algorithm
does this uniformly in the input and the requested accuracy, using
sufficiently accurate approximations of the input.

For an abstract compact metrizable abelian group, computability is not
intrinsic until effective presentations of the group and its dual have
been fixed. The following statement is therefore conditional on such
presentations. Suppose that $G$, its dual group $\Gamma$, and the dual
endomorphism $A=\wh T$ are given with compatible effective
presentations: the enumeration
$\Gamma\smallsetminus\{1\}\=\{\gamma_r:r\in\N\}$ is computable, the group
operations and equality in $\Gamma$ are decidable, the map $A$ is
computable, and the characters can be evaluated effectively, uniformly
in the character and the point of evaluation.

Indeed, the recursive construction in
Subsection~\ref{ss:inductive construction} uses only finitely many group operations in $\Gamma$, compositions with powers of $T$, rational arithmetic, composition with polynomials in $\Q[t]$, and Haar integration of finite trigonometric polynomials; the last operation is exact, since the Haar integral is the coefficient of the trivial character. See also Lemma~\ref{lem:rational fourier closure}. Consequently, given $N$, an algorithm produces the complete finite Fourier expansion of $F_N$, and $F_N$ can be evaluated
at any point to any prescribed accuracy by a finite computation. Moreover, Proposition~\ref{p:increment estimate} gives the explicit bound $\norm{F-F_N}_\infty<\frac{16}{105}\delta_N$. Since $\delta_N$ is explicitly defined, this bound gives a computable rate of convergence: from a target accuracy $\varepsilon>0$, one can compute an $N$ such that $\norm{F-F_N}_\infty<\varepsilon/2$, and then evaluate $F_N$ within $\varepsilon/2$. It follows that $F$ is computable in the above sense; see e.g.~\cite{PER89}.
\end{rem}

\subsection{Strategy}
\label{ss:intro strategy}

The non-compact set $\cM(G,T)\smallsetminus\{m_G\}$ is exhausted by the sets
$\{\nu\in\cM(G,T):D(\nu)\ge\rho\}$, $\rho\downarrow0$, where $D$ is a weak$^*$ continuous
gauge on $\cM(G,T)$, built from the Fourier coefficients of $\nu$, vanishing only at $m_G$
(Definition~\ref{def:separation function D} and Lemma~\ref{lem:D gauge}). Each of these sets
is weak$^*$ compact, so on each of them separation from $m_G$ can be witnessed by a single,
finite trigonometric polynomial; the difficulty is to add together infinitely many such
polynomials, at all scales $\rho\downarrow0$ simultaneously, without the later ones
destroying the gaps created by the earlier ones.

The construction alternates two operations to overcome this. First, from finite
character-orbit averages $B_{r,N}$ we build single-frequency detectors $K_{r,N}$
(Definition~\ref{d:fourier detector}), and combine finitely many of these, following a
truncation of $D$, into a trigonometric polynomial $S_\rho$ which has zero Haar mean, is
everywhere at most $\rho/4$, and has integral below $-\rho/4$ against every invariant
measure $\nu$ with $D(\nu)\ge\rho$ (Proposition~\ref{p:separation}). Second, we flatten: a
finite Birkhoff average is cohomologous to the observable itself and is small in
$L^2(m_G)$, so subtracting a Bernstein polynomial approximation of its positive part lowers
the maximum ergodic average below any prescribed threshold, while preserving the Haar mean,
the rationality of the Fourier coefficients, and every gap already created
(Proposition~\ref{p:polynomial-flatten}). Alternating these two operations at the scales
$\rho_n=2^{-n}$, with an explicit schedule of amplitudes, produces the sequence $\{F_n\}$
and proves Theorem~\ref{thm:main} (Section~\ref{s:induction}).

To obtain the Fourier series of Theorems~\ref{thm:circle}, \ref{thm:shift}
and~\ref{thm:torus}, Section~\ref{s:Fourier series} relocates the frequencies of the
increments $F_n-F_{n-1}$: each increment is split into many relocated copies, so that no
truncation of the resulting series can remove more than a prescribed amount from a single
block of frequencies.

\subsection{Organisation}

Section~\ref{s:separation} recalls the relevant Fourier-analytic preliminaries, introduces
the separation function $D$ of Definition~\ref{def:separation function D}, and constructs
from finite character-orbit averages the detectors $K_{r,N}$ and $S_\rho$ of
Proposition~\ref{p:separation}. Section~\ref{sec:averaging} develops the flattening
procedure of Proposition~\ref{p:polynomial-flatten}. Section~\ref{s:induction} combines the
two into the recursive construction of Subsection~\ref{ss:inductive construction}, proving
Theorem~\ref{thm:main}. Section~\ref{s:Fourier series} relocates frequencies, carrying this
out in Subsection~\ref{ss:effective Fourier construction} explicitly for expanding circle
maps, the one-sided full shift, and ergodic toral automorphisms, thereby proving
Theorems~\ref{thm:circle}, \ref{thm:shift} and~\ref{thm:torus}.

\section{Detectors and quantitative separation}
\label{s:separation}

Fourier coefficients separate invariant measures from Haar measure $m_G$.
We first construct a detector for a single Fourier coefficient
in Lemma~\ref{lem:single detector} and then combine finitely many such
detectors to obtain a quantitative gap at a prescribed scale in
Proposition~\ref{p:separation}.

We first recall some standard notation and facts from Fourier analysis on compact abelian groups; see \cite{Fol16}.

\begin{notation}
We identify $\T=\R/\Z$ with the circle group $\{z\in \C:\abs{z}=1\}$ in the usual way.
A \emph{character} of $G$ is a continuous group
homomorphism $\gamma\:G\to\T$. We write $\Gamma\=\wh G$ for the group of all characters of $G$, with pointwise multiplication, so that
\begin{equation*}
(\gamma\eta)(x)=\gamma(x)\eta(x)
\quad\text{and}\quad
\gamma^{-1}(x)=\overline{\gamma(x)}.
\end{equation*}
Denote $A\=\wh T$, and $A\gamma\=\gamma\circ T$.
Since $T$ is surjective, $m_G$ is $T$-invariant and $A$ is
injective. For a Borel probability measure $\nu$ on $G$ and
$\gamma\in\Gamma$, define $\wh\nu(\gamma)\=\int \! \overline\gamma\,\mathrm{d}\nu$.
If $\nu\in\cM(G,T)$, then
\begin{equation}\label{eq:fourier invariance}
 \wh\nu(A\gamma)=\wh\nu(\gamma)
 \quad\text{for all }\gamma\in\Gamma.
\end{equation}
\end{notation}

The orthogonality of characters with respect to Haar measure is standard;  see \cite[Proposition~4.4]{Fol16}. In particular, for every nontrivial $\gamma\in\Gamma$,
\begin{equation}\label{eq:orthogonality}
\int\!\overline\gamma\,\mathrm{d}m_G=0.
\end{equation}

We require the familiar criterion that $m_G$ is $T$-ergodic if
and only if $A$ has no nontrivial finite orbit on $\Gamma$ (cf.~\cite{Hal43, Roh49, SR88}; we include a proof for completeness).

\begin{lemma}[Distinct character]
\label{lem:distinct character}
The dual endomorphism $A=\wh T$ is injective. Moreover, for every
nontrivial $\gamma\in\Gamma$, the characters
$\gamma$, $A\gamma$, $A^2\gamma$, $\ldots$ are pairwise distinct.
\end{lemma}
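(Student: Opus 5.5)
Injectivity of $A$ comes from surjectivity of $T$. If $A\gamma=A\eta$, then $\gamma\circ T=\eta\circ T$. Since $T$ is onto, every $y\in G$ has the form $y=T(x)$, and so $\gamma(y)=\eta(y)$ for all $y$. Hence $\gamma=\eta$. The same argument shows $A\gamma=1$ forces $\gamma=1$: the kernel is trivial, so $A$ maps nontrivial characters to nontrivial characters.

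For the second assertion I argue by contradiction, using ergodicity of $m_G$. Suppose $\gamma\neq1$ and $A^i\gamma=A^j\gamma$ for some $0\le i<j$. By injectivity of $A$ (iterated), cancelling $A^i$ gives $A^p\gamma=\gamma$ with $p\=j-i\ge1$. In other words, $\gamma\circ T^p=\gamma$. The orbit $\gamma, A\gamma,\ldots,A^{p-1}\gamma$ then consists of finitely many characters, each nontrivial by the kernel remark above. Form
\begin{equation*}
h\=\sum_{k=0}^{p-1}A^k\gamma=\sum_{k=0}^{p-1}\gamma\circ T^k .
\end{equation*}
Since $A^p\gamma=\gamma$, we get $h\circ T=h$, so $h$ is a bounded $T$-invariant function.

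By ergodicity of $m_G$, $h$ is $m_G$-a.e.\ constant, and the constant equals $\int h\,\mathrm{d}m_G=0$ by \eqref{eq:orthogonality}. Continuity of $h$ and full support of Haar measure then give $h\equiv0$. Now I use $L^2(m_G)$-orthogonality of distinct characters. I may assume $p$ is the minimal period, so the summands $A^k\gamma$, $0\le k<p$, are pairwise distinct. Then
\begin{equation*}
\int h\,\overline\gamma\,\mathrm{d}m_G=1\neq0,
\end{equation*}
which contradicts $h\equiv0$.

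The main point needing care is reducing to a genuine periodic orbit (from $A^i\gamma=A^j\gamma$ to $A^p\gamma=\gamma$), which is exactly where injectivity is used. The rest is standard. An alternative to the ergodicity step is to take the invariant set $\{x:\gamma(x)=\gamma(T^{p}x)\ldots\}$ directly, but the $L^2$ argument above is cleaner.
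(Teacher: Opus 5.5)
Your proof is correct and follows the paper's argument: injectivity of $A$ from surjectivity of $T$, cancellation of $A^i$ to reduce to a periodic orbit $A^p\gamma=\gamma$, and ergodicity forcing the orbit sum to equal the constant $\int h\,\mathrm{d}m_G=0$. The only difference is the final contradiction. The paper evaluates at the identity, $H(e_G)=p>0$, and needs continuity and full support to make that evaluation legitimate. You instead pair with $\overline\gamma$ and use minimality of $p$, which needs only $h=0$ $m_G$-a.e., so your continuity and full-support step is in fact redundant.
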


\begin{proof}
The injectivity of $A$ follows from the surjectivity of $T$, as noted above.
Suppose that $A^j\gamma=A^k\gamma$ for some $0\le j<k$. 
Since $A$, and hence $A^j$, is injective, 
$A^j(A^{k-j}\gamma) = A^j\gamma$ forces 
$A^{k-j}\gamma = \gamma$, so $A^p\gamma = \gamma$ for some least $p \ge 1$.
Injectivity also gives
$A^\ell\gamma\neq1$ for every $\ell\ge0$, since $A^\ell 1=1$ and $\gamma\neq1$.
The function
$H\=\sum_{\ell=0}^{p-1}A^\ell\gamma$ 
satisfies $H\circ T=H$,
so by ergodicity $H$ is constant $m_G$-almost everywhere. Since $m_G$ has full support and $H$ is continuous, $H$ is a constant function.
By \eqref{eq:orthogonality}, $\int\!H\,\mathrm{d}m_G=0$, so $H\equiv0$,
contradicting the fact that $H(e_G)=p>0$.
\end{proof}

Since $G$ is compact and metrizable, its dual group $\Gamma=\wh G$ is
countable. Let $1\in\Gamma$ denote the trivial character. Since $G$ is nontrivial, $\Gamma\neq\{1\}$, and
Lemma~\ref{lem:distinct character} then shows that $\Gamma$ is infinite. Fix an
enumeration
\begin{equation}\label{eq:enumeration}
\Gamma\smallsetminus\{1\}\=
\{\gamma_r:r\in\N\},
\end{equation}
in which each nontrivial character occurs exactly once, and set $\gamma_0\=1$.

\begin{definition}
\label{def:separation function D}
Define the \emph{separation function}
$D\:\cM(G,T)\to[0,1]$ by
$D(\nu)\=
\sum_{r=1}^{+\infty}
2^{-r}\abs{\wh\nu(\gamma_r)}^2$,
for $\nu\in\cM(G,T)$.
\end{definition}

\begin{lemma}[Quadratic Fourier separation]
\label{lem:D gauge}
The separation function $D$ in Definition~\ref{def:separation function D} is weak$^*$ continuous.
Moreover, for $\nu\in\cM(G,T)$,
$D(\nu)=0$ if and only if $\nu=m_G$.
In particular,
$D(\nu)>0$ for every
$\nu\in\cM(G,T)\smallsetminus\{m_G\}$.
\end{lemma}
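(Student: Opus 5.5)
Weak$^*$ continuity will come from uniform convergence of continuous functions. For each fixed $r$, the map $\nu\mapsto\wh\nu(\gamma_r)=\int\!\overline{\gamma_r}\,\mathrm{d}\nu$ is weak$^*$ continuous on the space of Borel probability measures, since $\overline{\gamma_r}$ is continuous: apply the definition of weak$^*$ convergence to its real and imaginary parts. Hence each term $\nu\mapsto 2^{-r}\abs{\wh\nu(\gamma_r)}^2$ is weak$^*$ continuous. Because $\abs{\gamma_r}\equiv1$, we have $\abs{\wh\nu(\gamma_r)}\le1$, so the $r$-th term is bounded by $2^{-r}$ uniformly in $\nu$. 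The Weierstrass M-test then shows the series converges uniformly on $\cM(G,T)$, and a uniform limit of continuous functions is continuous. The same bound gives $0\le D(\nu)\le\sum_r 2^{-r}=1$, so $D$ indeed takes values in $[0,1]$.

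For the characterization of the zero set, first suppose $\nu=m_G$. Then \eqref{eq:orthogonality} gives $\wh{m_G}(\gamma_r)=0$ for every $r\ge1$, since each $\gamma_r$ is nontrivial, so $D(m_G)=0$.

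Conversely, suppose $D(\nu)=0$. Since every term of the series is nonnegative, each $\wh\nu(\gamma_r)$ vanishes for $r\ge1$. Together with $\wh\nu(1)=1=\wh{m_G}(1)$, this means $\nu$ and $m_G$ have identical Fourier coefficients on all of $\Gamma$, because the enumeration \eqref{eq:enumeration} covers every nontrivial character. By the uniqueness theorem for Fourier--Stieltjes transforms on compact abelian groups (see \cite{Fol16}), $\nu=m_G$. To make that step self-contained, I would argue as follows. Trigonometric polynomials are uniformly dense in $C(G)$ by Stone--Weierstrass: they form a self-adjoint algebra containing the constants, and they separate points by Pontryagin duality (Peter--Weyl). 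So $\int\!f\,\mathrm{d}\nu=\int\!f\,\mathrm{d}m_G$ for every $f\in C(G)$, and the Riesz representation theorem gives $\nu=m_G$.

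The final assertion, that $D(\nu)>0$ for every $\nu\in\cM(G,T)\smallsetminus\{m_G\}$, is the contrapositive of this converse, combined with $D\ge0$.

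None of these steps is a real obstacle. The only point needing care is the appeal to the uniqueness (density) theorem, which I would cite rather than reprove. Note also that $T$-invariance of $\nu$ plays no role in this lemma: it holds for all Borel probability measures.
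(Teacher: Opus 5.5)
Your proposal is correct and follows the paper's proof: the uniform bound $2^{-r}$ on each term gives uniform convergence and hence weak$^*$ continuity, and $D(\nu)=0$ forces every nontrivial Fourier coefficient to vanish, so $\nu=m_G$ by uniqueness of Fourier--Stieltjes transforms. You also check the easy direction $D(m_G)=0$ explicitly via orthogonality, which the paper leaves implicit, and you sketch the uniqueness theorem via Stone--Weierstrass and Riesz, which the paper simply cites.
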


\begin{proof}
For each $r\in\N$, the map
$\nu\mapsto\wh\nu(\gamma_r)$ is weak$^*$ continuous. Since $2^{-r}\abs{\wh\nu(\gamma_r)}^2\le2^{-r}$
for every $r\in\N$ and every probability measure $\nu$, the series
defining $D$ converges uniformly. Hence $D$ is weak$^*$ continuous. If $D(\nu)=0$, then every nontrivial Fourier coefficient of $\nu$ vanishes, hence $\nu$ and $m_G$ have the same Fourier coefficients. By uniqueness
of Fourier coefficients for finite Borel measures on compact abelian
groups, $\nu=m_G$.
\end{proof}

The function $D\:\cM(G,T)\to[0,1]$ 
measures how far an invariant measure is from $m_G$, but it is not itself an
observable. We now convert it into one, beginning with the detector attached to a single
Fourier coefficient.
We first fix some notation.

\begin{notation}
A \emph{trigonometric polynomial} on $G$ is a function of the form
$P=\sum_{\gamma\in E}a_\gamma\gamma$, where $E\subseteq\Gamma$ is finite and
$a_\gamma\in\C$. For $\gamma\in\Gamma$ we write
$\wh P(\gamma)\=\int_G P\,\overline\gamma\,\mathrm{d}m_G$, so that
$\wh P(\gamma)=a_\gamma$ for $\gamma\in E$ and $\wh P(\gamma)=0$ otherwise, and
we set $\supp\wh P\=\bigl\{\gamma\in\Gamma:\wh P(\gamma)\neq0\bigr\}$. 
Recall that the
\emph{Wiener algebra} $A(G)$ consists of those $f\in C(G,\C)$ whose Fourier coefficients
$(\wh f(\gamma))_{\gamma\in\Gamma}$ lie in $\ell^1(\Gamma)$, equipped with the norm
\begin{equation}\label{eq:wiener norm}
\wnorm{f}\=\sum_{\gamma\in\Gamma}\abs{\wh f(\gamma)};
\end{equation}
it is a commutative Banach algebra under pointwise multiplication, so that
$\norm{f}_\infty\le\norm{f}_A$ and $\norm{f_1f_2}_A\le\norm{f_1}_A\norm{f_2}_A$ for
$f,f_1,f_2\in A(G)$. 
Every trigonometric polynomial lies in $A(G)$. 

Let $\cF_{\Q}(G)$ denote the set of inversion-invariant trigonometric
polynomials with rational Fourier coefficients; equivalently,
\begin{equation*}
\cF_{\Q}(G)=\bigl\{P:\wh P(\gamma)\in\Q,\,
\wh P\bigl(\gamma^{-1}\bigr)=\wh P(\gamma),\,
\#\{\gamma:\wh P(\gamma)\neq0\}<+\infty\bigr\},
\end{equation*}
that is, the real-valued trigonometric polynomials all of whose Fourier
coefficients are rational.
With respect to the fixed enumeration
$\Gamma\smallsetminus\{1\}=\{\gamma_r:r\in\N\}$ together with $\gamma_0\=1$,
we may equivalently write $P=\sum_{r=0}^{+\infty}a_r\gamma_r$, where only
finitely many $a_r$ are nonzero, and hence
$\wnorm{P}=\sum_{r=0}^{+\infty}\abs{a_r}$.
\end{notation}

\begin{lemma}
\label{lem:rational fourier closure}
Suppose $P$, $Q\in\cF_{\Q}(G)$, $j\in\N_0$, and $R\in\Q[t]$. Then $P+Q$, $PQ$, $P\circ T^j$, and $R\circ P$ all belong to $\cF_{\Q}(G)$.
Moreover, 
\begin{equation*}
\supp\wh{\bigl(P\circ T^j\bigr)}=A^j\bigl(\supp\wh P\bigr)
\qquad\text{and}\qquad
\wnorm{P\circ T^j}=\wnorm{P}.
\end{equation*}
\end{lemma}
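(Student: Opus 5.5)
We work directly with Fourier expansions, using the characterisation of $\cF_{\Q}(G)$ as the finitely supported coefficient families $(a_\gamma)$ with $a_\gamma\in\Q$ and $a_{\gamma^{-1}}=a_\gamma$.

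\emph{Sums and products.} For $P+Q$ the claim is immediate, since $\wh{(P+Q)}(\gamma)=\wh P(\gamma)+\wh Q(\gamma)$. For the product we use that characters multiply: $PQ=\sum_{\gamma,\eta}\wh P(\gamma)\wh Q(\eta)\,\gamma\eta$, so
\begin{equation*}
\wh{(PQ)}(\chi)=\sum_{\gamma\eta=\chi}\wh P(\gamma)\wh Q(\eta),
\end{equation*}
a finite sum of rationals. The support lies in the finite set $(\supp\wh P)(\supp\wh Q)$. Inversion invariance follows by reindexing $(\gamma,\eta)\mapsto(\gamma^{-1},\eta^{-1})$, which is a bijection of the pairs with $\gamma\eta=\chi$ onto those with $\gamma\eta=\chi^{-1}$.

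\emph{Polynomial composition.} Write $R(t)=\sum_i q_it^i$ with $q_i\in\Q$. Then $R\circ P=\sum_i q_iP^i$, where $P^0$ is the constant $1=\gamma_0$, which lies in $\cF_{\Q}(G)$. The claim follows by induction from closure under products, rational scalars and sums.

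\emph{Composition with $T^j$.} We have $P\circ T^j=\sum_\gamma \wh P(\gamma)\,(\gamma\circ T^j)=\sum_\gamma\wh P(\gamma)\,A^j\gamma$. The key point is that $A^j$ is injective by Lemma~\ref{lem:distinct character}, so no two terms merge. Hence
\begin{equation*}
\wh{(P\circ T^j)}(A^j\gamma)=\wh P(\gamma),
\end{equation*}
and all other coefficients vanish. This gives $\supp\wh{(P\circ T^j)}=A^j(\supp\wh P)$, shows the coefficients are rational, and shows the multiset of coefficient moduli is unchanged, so $\wnorm{P\circ T^j}=\wnorm P$. Inversion invariance follows because $A^j$ is a homomorphism: $A^j(\gamma^{-1})=(A^j\gamma)^{-1}$, and this carries the equal coefficients at $\gamma$ and $\gamma^{-1}$ to $A^j\gamma$ and its inverse.

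\emph{Main obstacle.} There is none of substance. The only step needing care is the injectivity of $A^j$, which is what makes the Wiener-norm equality exact rather than just an inequality. It is already available from Lemma~\ref{lem:distinct character}.
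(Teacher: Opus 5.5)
Your proposal is correct and follows the same route as the paper: closure under sums and products comes from characters being closed under multiplication and inversion, and the support and Wiener-norm identities come from $\gamma\circ T^j=A^j\gamma$ together with injectivity of $A$, which makes $\gamma\mapsto A^j\gamma$ a coefficient-preserving bijection of supports. You simply write out the routine details that the paper compresses into \emph{follows directly from the definition}: the convolution formula and reindexing for products, the reduction of $R\circ P$ to powers of $P$, and inversion invariance via $A^j(\gamma^{-1})=(A^j\gamma)^{-1}$.
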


\begin{proof}
It follows directly from the definition, since characters are closed under multiplication and inversion. The support identity follows from $\gamma\circ T^j=A^j\gamma$ for all $\gamma\in \Gamma$.
Since $A$ is injective by Lemma~\ref{lem:distinct character}, this identity shows moreover that $\gamma\mapsto\gamma\circ T^j$ is a bijection from $\supp\wh P$ onto $\supp\wh{(P\circ T^j)}$, so $\wnorm{P\circ T^j}=\sum_{\gamma\in\supp\wh P}\abs{\wh P(\gamma)}=\wnorm P$.
\end{proof}

\begin{definition}
\label{d:fourier detector}
    Let $r$, $N\in\N$. For each $x\in G$, define
    $B_{r,N}(x)\=N^{-1}\sum_{j=0}^{N-1}\overline{A^j\gamma_r}(x)$, and define the real-valued function
    \begin{equation}\label{eq:fourier detector}
 K_{r,N}(x)\=\frac1N-\abs{B_{r,N}(x)}^2 = -\frac1{N^2}
\sum_{0\le\ell<j<N}
\bigl[\bigl(A^j\gamma_r \bigr)\bigl(A^\ell\gamma_r \bigr)^{-1}(x) +\bigl(A^\ell\gamma_r \bigr) \bigl(A^j\gamma_r \bigr)^{-1}(x)\bigr].
\end{equation}
In particular, $K_{r,N}\in\cF_{\Q}(G)$.
\end{definition}

\begin{lemma}[A real-valued Fourier detector]\label{lem:single detector}
Let $r$, $N\in\N$. 
Then the following hold:
\begin{enumerate}[label=\rm{(\roman*)}]
\smallskip
\item $\norm{K_{r,N}}_{\infty}\le 1$ and $K_{r,N}\le 1/N$.
\smallskip
\item $\int \!K_{r,N}\,\mathrm{d}m_G=0$ and $\int \!K_{r,N}\,\mathrm{d}\nu\le 1/N-\abs{\wh\nu(\gamma_r)}^2$ for every $\nu\in\cM(G,T)$.
\smallskip
\item $\wnorm{K_{r,N}}<1$.
\end{enumerate}
\end{lemma}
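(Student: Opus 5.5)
The plan is to read everything off the two expressions for $K_{r,N}$ in \eqref{eq:fourier detector}, using Lemma~\ref{lem:distinct character} to control the frequencies that occur. The first step is to expand $\abs{B_{r,N}}^2 = N^{-2}\sum_{j,\ell}\overline{A^j\gamma_r}\,A^\ell\gamma_r$ and check that this agrees with the second expression in \eqref{eq:fourier detector}. The diagonal terms $j=\ell$ contribute exactly $N\cdot N^{-2}=1/N$. The off-diagonal terms pair up into the quotients $(A^j\gamma_r)(A^\ell\gamma_r)^{-1}$ and their inverses.

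For (i), note that $\abs{B_{r,N}}\le 1$, since $B_{r,N}$ is an average of unimodular functions. Hence
\begin{equation*}
1/N-1\le K_{r,N}\le 1/N\le 1 .
\end{equation*}
Since $1/N-1\ge -1$, this gives $\norm{K_{r,N}}_\infty\le1$.

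For (ii), start with the Haar mean. For $\ell\ne j$, the character $(A^j\gamma_r)(A^\ell\gamma_r)^{-1}$ is nontrivial, because $A^j\gamma_r\ne A^\ell\gamma_r$ by Lemma~\ref{lem:distinct character} (recall $\gamma_r\ne1$). By \eqref{eq:orthogonality} each off-diagonal term then integrates to zero. It follows that $\int K_{r,N}\,\mathrm{d}m_G=0$.

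For the bound against $\nu\in\cM(G,T)$, apply Cauchy--Schwarz (Jensen) to get
\begin{equation*}
\int\abs{B_{r,N}}^2\,\mathrm{d}\nu\ge\Bigl\lvert\int B_{r,N}\,\mathrm{d}\nu\Bigr\rvert^2 .
\end{equation*}
By \eqref{eq:fourier invariance}, $\int B_{r,N}\,\mathrm{d}\nu = N^{-1}\sum_j\wh\nu(A^j\gamma_r)=\wh\nu(\gamma_r)$. Therefore $\int K_{r,N}\,\mathrm{d}\nu\le 1/N-\abs{\wh\nu(\gamma_r)}^2$.

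For (iii), bound the Wiener norm from the second expression. There are $N(N-1)$ terms, each a character with coefficient of modulus $N^{-2}$, so by the triangle inequality $\wnorm{K_{r,N}}\le N(N-1)/N^2=1-1/N<1$. No distinctness of the quotient characters is needed here, since coincidences only merge coefficients and can only decrease the $\ell^1$ norm. If $N=1$ the sum is empty, $K_{r,1}=0$, and the bound is immediate.

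The only non-routine point is verifying that the off-diagonal characters are nontrivial, which is exactly Lemma~\ref{lem:distinct character}; everything else is direct computation.
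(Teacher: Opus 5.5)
Your proof is correct and follows the paper's argument essentially step for step: the pointwise bound $\abs{B_{r,N}}\le1$ gives (i); in (ii), distinctness of the $A^j\gamma_r$ (Lemma~\ref{lem:distinct character}) with orthogonality gives the zero Haar mean, and Cauchy--Schwarz with \eqref{eq:fourier invariance} gives the bound against $\nu$; the triangle inequality over the $N(N-1)$ terms of \eqref{eq:fourier detector} gives (iii). The only cosmetic difference is that the paper obtains the Haar mean from $\int\abs{B_{r,N}}^2\,\mathrm{d}m_G=1/N$ by orthonormality, rather than integrating the off-diagonal quotient characters one at a time as you do.
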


\begin{proof}
(i) For each character $\gamma\in\Gamma$, $\norm{\gamma}_\infty=1$, thus $\norm{B_{r,N}}_\infty \le1$. Therefore,
$K_{r,N}(x)\in[1/N-1,\,1/N]$ for all $x\in G$. This proves (i).

\smallskip
(ii) By Lemma~\ref{lem:distinct character}, the characters $A^j\gamma_r$ are distinct for $0\le j<N$. Thus their orthogonality gives
$\int \abs{B_{r,N}}^2\,\mathrm{d}m_G=1/N$,
so $K_{r,N}$ has zero Haar mean. 
By \eqref{eq:fourier invariance},
$\int\! B_{r,N}\,\mathrm{d}\nu=
\frac1N\sum_{j=0}^{N-1}\wh\nu(A^j\gamma_r)
=\wh\nu(\gamma_r)$.
Thus, by the Cauchy--Schwarz inequality,
$\int\!\abs{B_{r,N}}^2\,\mathrm{d}\nu
\ge\Absbig{\int \!B_{r,N}\,\mathrm{d}\nu}^2=\abs{\wh\nu(\gamma_r)}^2$,
which gives $\int\!K_{r,N}\,\mathrm{d}\nu\le 1/N-\abs{\wh\nu(\gamma_r)}^2$.

\smallskip
(iii) By \eqref{eq:fourier detector}, $\wnorm{K_{r,N}}\le (N-1)/N<1$.
\end{proof}

At each scale $\rho$, the following function $S_\rho$ is designed to separate $m_G$
from all measures $\nu$ satisfying $D(\nu)\ge\rho$.

\begin{notation}
For $0<\rho\le1$, set
\begin{equation}\label{eq:defi of R N}
R(\rho)\=\bigl\lceil\log_2 (2/\rho)\bigr\rceil,\qquad
N(\rho)\=\bigl\lceil 4/\rho\bigr\rceil,
\end{equation}
and define
\begin{equation}\label{eq:defi of S_rho}
 S_\rho\=
 \sum_{r=1}^{R(\rho)}
 2^{-r}K_{r,N(\rho)}.
\end{equation}
\end{notation}

It follows directly from Lemma~\ref{lem:rational fourier closure} that $S_\rho\in\cF_\Q$. 

\begin{prop}[Inversion-invariant trigonometric polynomial separation]
\label{p:separation}
Suppose $\rho\in(0,1]$. Then the following hold:
\begin{enumerate}[label=\rm{(\roman*)}]
    \smallskip
    \item $\int\!S_\rho\,\mathrm{d}m_G=0$, $\norm{S_\rho}_{\infty}<1$, and $\wnorm{S_\rho}<1$. 
    \smallskip
    \item For each $x\in G$, $S_\rho(x)<\rho/4$.
    \smallskip
    \item For each $\nu\in\cM(G,T)$ with $D(\nu)\ge\rho$, we have $\int\! S_\rho\,\mathrm{d}\nu<-\rho/4$.
\end{enumerate}
\end{prop}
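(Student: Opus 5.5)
The plan is to read everything off Lemma~\ref{lem:single detector}, using linearity of $\rho\mapsto S_\rho$ in the $K_{r,N}$ and the geometric weights $2^{-r}$; no new ingredient is needed. Throughout, write $R\=R(\rho)$ and $N\=N(\rho)$.

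For (i), the Haar mean vanishes term by term by Lemma~\ref{lem:single detector}(ii). For the norm bounds, Lemma~\ref{lem:single detector}(i) and (iii) give
\begin{equation*}
\norm{S_\rho}_\infty\le\wnorm{S_\rho}\le\sum_{r=1}^{R}2^{-r}\wnorm{K_{r,N}}<\sum_{r=1}^{R}2^{-r}<1.
\end{equation*}

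For (ii), use $K_{r,N}\le 1/N$ pointwise, which gives
\begin{equation*}
S_\rho(x)\le N^{-1}\sum_{r=1}^R2^{-r}<1/N\le\rho/4,
\end{equation*}
since $N=\lceil 4/\rho\rceil\ge4/\rho$. The inequality is strict because $\sum_{r\le R}2^{-r}=1-2^{-R}<1$.

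For (iii), fix $\nu\in\cM(G,T)$ with $D(\nu)\ge\rho$. By Lemma~\ref{lem:single detector}(ii),
\begin{equation*}
\int\! S_\rho\,\mathrm{d}\nu\le\sum_{r=1}^R2^{-r}\Bigl(\frac1N-\abs{\wh\nu(\gamma_r)}^2\Bigr)<\frac1N-\sum_{r=1}^R2^{-r}\abs{\wh\nu(\gamma_r)}^2 .
\end{equation*}
The main step is to compare the truncated sum with $D(\nu)$. Since $\abs{\wh\nu(\gamma_r)}\le1$, the tail satisfies $\sum_{r>R}2^{-r}\abs{\wh\nu(\gamma_r)}^2\le2^{-R}\le\rho/2$, because $R\ge\log_2(2/\rho)$. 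Hence the truncated sum is at least $D(\nu)-\rho/2\ge\rho/2$. Combined with $1/N\le\rho/4$, this yields
\begin{equation*}
\int\! S_\rho\,\mathrm{d}\nu<\frac{\rho}{4}-\frac{\rho}{2}=-\frac{\rho}{4}.
\end{equation*}

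The only point needing care is strictness, which comes from the factor $1-2^{-R}<1$ multiplying $1/N$. This is legitimate because $1/N>0$: replacing $(1-2^{-R})/N$ by $1/N$ is a strict increase. So the bound is strict as claimed, with no further obstacle.
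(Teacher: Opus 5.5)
Your proposal is correct and follows essentially the same route as the paper: you apply Lemma~\ref{lem:single detector} term by term, get strictness from $\sum_{r\le R(\rho)}2^{-r}<1$ (with $1/N(\rho)\le\rho/4$), and control the truncation of $D$ via the tail bound $2^{-R(\rho)}\le\rho/2$. The only cosmetic difference is that you bound $\norm{S_\rho}_\infty$ through $\wnorm{S_\rho}$, whereas the paper uses $\norm{K_{r,N}}_\infty\le1$ directly.
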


\begin{proof}
(i) By Lemma~\ref{lem:single detector}, for all $r$, $N\in \N$, $\norm{K_{r,N}}_\infty\le1$, $\int\! K_{r,N}\,\mathrm{d}m_G=0$, and $\wnorm{K_{r,N}}<1$. Thus $\int\! S_\rho \,\mathrm{d}m_G=0$, $\norm{S_\rho}_\infty \le \sum_{i=1}^{R(\rho)}2^{-i}<1$, and $\wnorm{S_\rho}<\sum_{i=1}^{R(\rho)}2^{-i}<1$.

\smallskip
(ii) It follows from Lemma~\ref{lem:single detector}~(i) and \eqref{eq:defi of R N} that $K_{r,N(\rho)}\le N(\rho)^{-1}\le \rho/4$. Then by \eqref{eq:defi of S_rho}, we see that for each $x\in G$,
$S_\rho(x)\le\frac{\rho}{4}\sum_{r=1}^{R(\rho)}2^{-r}<\rho/4$.

\smallskip
(iii) Let $\nu\in\cM(G,T)$ satisfy $D(\nu)\ge\rho$. Then
\begin{equation}
\label{eq:truncation of D}
 \sum_{r=1}^{R(\rho)}2^{-r}\abs{\wh\nu(\gamma_r)}^2
 \ge \rho-2^{-R(\rho)}\ge\frac\rho2.
\end{equation}
It then follows from Lemma~\ref{lem:single detector}~(ii) and \eqref{eq:truncation of D} that 
\begin{equation*}
\int\!S_\rho\,\mathrm{d}\nu
\le\sum_{r=1}^{R(\rho)}2^{-r}
\bigl(N(\rho)^{-1}-
\abs{\wh \nu (\gamma_r)}^2
\bigr)<\frac{\rho}{4}-\frac{\rho}{2}
=-\frac{\rho}{4}.
\qedhere
\end{equation*}
\end{proof}

\section{Quantitative averaging and positive-part flattening}
\label{sec:averaging}

The detectors $S_\rho$, $0<\rho\le1$, constructed in Proposition~\ref{p:separation}
separate Haar measure from every invariant measure at scale $\rho$. Since no single
scale detects every invariant measure other than $m_G$, an exposing function must
combine detectors from infinitely many scales, and the difficulty in doing so is to
preserve the gaps already created by earlier detectors: at each stage, the newly added
detector must contribute a definite negative amount to the measures it detects, while
whatever positive amount it contributes to the remaining invariant measures must be
kept under control.

To control this positive contribution, we combine two estimates: an $L^2$ bound for
finite Birkhoff averages, together with the fact that a finite Birkhoff average is
cohomologous to the original observable (Lemmas~\ref{lem:L2 estimate}
and~\ref{lem:finite coboundary}); and a Bernstein polynomial approximation of the
positive-part function, with an explicit error bound
(Lemma~\ref{lem:bernstein approximation}). Together these results underlie
Proposition~\ref{p:polynomial-flatten}, which supplies the flattening procedure used
in the recursion.

\begin{notation}
Throughout, $\norm{\cdot}_{L^2}$ denotes the $L^2(m_G)$-norm.

For $M\in\N$, $H\in C(G,\R)$, and $x\in G$, we write $S_MH(x)\=\sum_{j=0}^{M-1}H(T^jx)$.
\end{notation}

\begin{lemma}[A finite $L^2$ estimate]
\label{lem:L2 estimate}
Suppose $H\in \cF_\Q$ satisfies $\int\!H\,\mathrm{d}m_G=0$. Then for each $M\in \N$,
\begin{equation}\label{ineq:L2 of Birkhoff averages}
\Normbig{M^{-1} S_MH}_{L^2}\le
M^{-1/2}\wnorm{H}.
\end{equation}
\end{lemma}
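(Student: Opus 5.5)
We expand $H$ in characters and use Lemma~\ref{lem:distinct character} to bound how many terms of the Birkhoff sum can share a frequency. Write $H=\sum_{\gamma\in E}a_\gamma\gamma$ with $E=\supp\wh H$ finite. Since $\int H\,\mathrm{d}m_G=0$, we have $1\notin E$. Then
\begin{equation*}
M^{-1}S_MH=\sum_{\gamma\in E}a_\gamma\,M^{-1}\sum_{j=0}^{M-1}A^j\gamma
=\sum_{\gamma\in E}a_\gamma\,\Phi_\gamma,
\qquad \Phi_\gamma\=M^{-1}\sum_{j=0}^{M-1}A^j\gamma .
\end{equation*}
Rather than computing $\norm{M^{-1}S_MH}_{L^2}^2$ directly, we apply the triangle inequality in $L^2(m_G)$:
\begin{equation*}
\norm{M^{-1}S_MH}_{L^2}\le\sum_{\gamma\in E}\abs{a_\gamma}\,\norm{\Phi_\gamma}_{L^2}.
\end{equation*}
This reduces the lemma to the single-character bound $\norm{\Phi_\gamma}_{L^2}\le M^{-1/2}$ for each nontrivial $\gamma$.

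For the single-character bound, fix a nontrivial $\gamma$. By Lemma~\ref{lem:distinct character}, the characters $\gamma,A\gamma,\ldots,A^{M-1}\gamma$ are pairwise distinct. They are therefore orthonormal in $L^2(m_G)$ by \eqref{eq:orthogonality}, since $(A^j\gamma)\overline{(A^\ell\gamma)}$ is a nontrivial character for $j\neq\ell$. Hence
\begin{equation*}
\norm{\Phi_\gamma}_{L^2}^2=M^{-2}\cdot M=M^{-1}.
\end{equation*}
This is the same computation as $\int\abs{B_{r,N}}^2\,\mathrm{d}m_G=1/N$ in the proof of Lemma~\ref{lem:single detector}~(ii). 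Summing over $E$ gives $\norm{M^{-1}S_MH}_{L^2}\le M^{-1/2}\sum_{\gamma\in E}\abs{a_\gamma}=M^{-1/2}\wnorm{H}$, which is the claim.

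There is no real obstacle here. The only point needing care is that the triangle inequality must be applied \emph{before} expanding the square. Different $\gamma,\gamma'\in E$ may have overlapping orbits, so cross terms between $\Phi_\gamma$ and $\Phi_{\gamma'}$ need not vanish. Handling the frequencies separately sidesteps this, at the cost of the $\ell^1$ (Wiener) norm in place of an $\ell^2$ norm, which is exactly the form the statement asks for. The hypothesis $1\notin E$, coming from the zero Haar mean, is essential: the trivial character is fixed by $A$, and its orbit average would not decay.
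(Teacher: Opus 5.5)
Your proposal is correct and follows essentially the same route as the paper. The paper also expands $H$ in nontrivial characters, applies the triangle inequality in $L^2(m_G)$, and bounds each orbit average by $M^{-1/2}$ using the orthonormality of the pairwise distinct characters $\gamma, A\gamma, \ldots, A^{M-1}\gamma$ from Lemma~\ref{lem:distinct character}.
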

\begin{proof}
Let $H=\sum_{r=1}^{+\infty} a_r\gamma_r$, where $a_r\neq 0$ for finitely many $r\in \N$. Then 
\begin{equation*}
\Normbig{M^{-1}S_MH}_{L^2}
=\Normbigg{\sum_{r=1}^{+\infty}a_r (M^{-1}\sum_{k=0}^{M-1}A^k\gamma_r)}_{L^2}
\le\sum_{r=1}^{+\infty}\abs{a_r} \Normbigg{M^{-1}\sum_{k=0}^{M-1}A^k\gamma_r}_{L^2}
\le M^{-1/2}\wnorm{H},
\end{equation*}
where the final inequality follows from Lemma~\ref{lem:distinct character},
which makes $\gamma_r,A\gamma_r,\ldots,A^{M-1}\gamma_r$ pairwise distinct, and
hence orthonormal in $L^2(m_G)$ by \eqref{eq:orthogonality}.
\end{proof}

The finite $L^2$ estimate above controls the finite Birkhoff average
$M^{-1}S_MH$. We shall also use the standard fact that a finite Birkhoff
average is cohomologous to the original observable; see, for example,
\cite[Section~1]{Boc18}. For $H\in C(G,\R)$ and $M\in\N$, define
\begin{equation}\label{eq:defi of U}
U_{M,H}
\=\frac1M\sum_{j=0}^{M-2}(M-1-j)H\circ T^j,
\end{equation}
with $U_{1,H}\=0$.

\begin{lemma}[A finite coboundary identity]
\label{lem:finite coboundary}
For $H\in C(G,\R)$ and $M\in\N$, the following hold:
\begin{enumerate}[label=\rm{(\roman*)}]
\smallskip
\item $H-M^{-1}S_MH=U_{M,H}-U_{M,H}\circ T$.
\smallskip
\item If $H\in\cF_\Q$, then $U_{M,H}\in\cF_\Q$ and $\norm{U_{M,H}}_\infty \le (M-1)\wnorm{H}/2$.
\end{enumerate}
\end{lemma}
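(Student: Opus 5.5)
For part (i) I will verify the identity by direct telescoping. Write $U_{M,H}=M^{-1}\sum_{j=0}^{M-2}(M-1-j)H\circ T^j$. Then
\begin{equation*}
U_{M,H}\circ T=\frac1M\sum_{j=1}^{M-1}(M-j)H\circ T^{j},
\end{equation*}
after reindexing $j+1\mapsto j$. Subtracting, the $j=0$ term of $U_{M,H}$ contributes $\frac{M-1}{M}H$. For $1\le j\le M-2$ the coefficient of $H\circ T^j$ is $\frac{(M-1-j)-(M-j)}{M}=-\frac1M$. The $j=M-1$ term contributes only from $U_{M,H}\circ T$, namely $-\frac1M H\circ T^{M-1}$. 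Altogether
\begin{equation*}
U_{M,H}-U_{M,H}\circ T=H-\frac1M\sum_{j=0}^{M-1}H\circ T^j=H-M^{-1}S_MH.
\end{equation*}
The case $M=1$ is trivial, since $U_{1,H}=0$ and $H-S_1H=0$. I will check $M=2$ separately, where the middle range of $j$ is empty, but the same bookkeeping applies.

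For part (ii), membership in $\cF_\Q$ follows from Lemma~\ref{lem:rational fourier closure}. Each $H\circ T^j$ lies in $\cF_\Q(G)$, and $U_{M,H}$ is a finite linear combination of these with rational coefficients $(M-1-j)/M$. Scalar multiplication by a rational preserves rationality and inversion-invariance of the coefficients, which is immediate from the definition, or equivalently comes from composing with the polynomial $R(t)=ct\in\Q[t]$.

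For the norm bound I will use $\norm{H\circ T^j}_\infty=\norm{H}_\infty\le\wnorm{H}$. The triangle inequality then gives
\begin{equation*}
\norm{U_{M,H}}_\infty\le\frac{\wnorm H}{M}\sum_{j=0}^{M-2}(M-1-j)=\frac{\wnorm H}{M}\cdot\frac{M(M-1)}{2}=\frac{(M-1)\wnorm H}{2}.
\end{equation*}
Alternatively, one could bound $\wnorm{U_{M,H}}$ using the equality $\wnorm{H\circ T^j}=\wnorm H$ from Lemma~\ref{lem:rational fourier closure}, but the sup-norm estimate suffices.

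There is no real obstacle here. The only point needing care is the index bookkeeping in the telescoping for part (i), in particular the endpoint terms $j=0$ and $j=M-1$.
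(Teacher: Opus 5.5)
Your proposal is correct and follows the paper's proof. Part (i) is the direct telescoping computation that the paper summarizes as "follows directly from the definition," and part (ii) uses Lemma~\ref{lem:rational fourier closure} for membership in $\cF_\Q$ together with the same triangle-inequality estimate $\frac1M\sum_{j=0}^{M-2}(M-1-j)\norm{H}_\infty=\frac{M-1}{2}\norm{H}_\infty\le\frac{M-1}{2}\wnorm{H}$.
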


\begin{proof}
(i)  The assertion follows directly from the definition. 

\smallskip
(ii) By Lemma~\ref{lem:rational fourier closure}, $U_{M,H}\in \cF_\Q$, and the inequality comes from 
\begin{equation*}\norm{U_{M,H}}_\infty\le
\frac1M \sum_{j=0}^{M-2}(M-1-j)\norm{H}_\infty
=\frac{M-1}{2}\norm{H}_\infty
\le \frac{M-1}{2}\wnorm{H}. \qedhere
\end{equation*}
\end{proof}

To preserve $\cF_{\Q}$ throughout the construction, we
approximate $t_+\=\max\{t,\,0\}$ by a Bernstein polynomial. Let $d\ge1$ be an integer and $U>0$. Set
$y_j\=U(2j/d-1)$ for $0\le j\le d$.
For $t\in[-U,U]$, put
$u\=(t+U)/(2U)$,
and define
\begin{equation}\label{eq:bernstein approximation}
\cB_{d,U}(t)\=
\sum_{j=0}^{d}
{y_j}_+
\binom dj
u^j(1-u)^{d-j}.
\end{equation}
Thus $\cB_{d,U}$ is the Bernstein polynomial of the
positive-part function, after identifying $[-U,U]$ affinely with
$[0,1]$.

\begin{lemma}[Bernstein approximation of the positive part]
\label{lem:bernstein approximation}
Suppose $U>0$, $d\in \N$, and $t\in[-U,U]$. Then $0\le \cB_{d,U}(t)-t_+ \le U/\sqrt d$. Moreover, if $U\in\Q$, then $\cB_{d,U}\in\Q[t]$.
\end{lemma}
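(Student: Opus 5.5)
Since $\cB_{d,U}$ is the classical Bernstein polynomial of $g(u)\=\bigl(U(2u-1)\bigr)_+$ on $[0,1]$, pulled back by the affine change of variable $u=(t+U)/(2U)$, the plan is to apply the standard probabilistic representation and then use convexity for the lower bound and a variance estimate for the upper bound. Fix $t\in[-U,U]$ and set $u=(t+U)/(2U)\in[0,1]$. Let $X$ be a binomial random variable with parameters $d$ and $u$, and put $Y\=U(2X/d-1)$. Then, by \eqref{eq:bernstein approximation},
\begin{equation*}
\cB_{d,U}(t)=\E\bigl[Y_+\bigr],
\qquad
\E[Y]=U(2u-1)=t,
\qquad
\Var(Y)=\frac{4U^2}{d^2}\,du(1-u)\le\frac{U^2}{d}.
\end{equation*}

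For the lower bound I will use convexity of $s\mapsto s_+$. Jensen's inequality gives $\E[Y_+]\ge(\E Y)_+=t_+$, so $\cB_{d,U}(t)-t_+\ge0$.

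For the upper bound, the key point is that $s\mapsto s_+$ is $1$-Lipschitz. Hence $Y_+-t_+\le\abs{Y-t}$ pointwise, and taking expectations and applying Cauchy--Schwarz yields
\begin{equation*}
\cB_{d,U}(t)-t_+\le\E\abs{Y-t}\le\sqrt{\Var(Y)}\le U/\sqrt d.
\end{equation*}
If I wanted to avoid probabilistic language, the same argument could be phrased as a sum over the Bernstein weights $\binom dj u^j(1-u)^{d-j}$, using the classical identities $\sum_j\binom dj u^j(1-u)^{d-j}=1$, $\sum_j (j/d)\binom dj u^j(1-u)^{d-j}=u$, and $\sum_j (j/d-u)^2\binom dj u^j(1-u)^{d-j}=u(1-u)/d$.

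For rationality, suppose $U\in\Q$. Then each $y_j=U(2j/d-1)$ is rational, so each ${y_j}_+$ is rational, and $u=(t+U)/(2U)$ is a polynomial in $t$ with rational coefficients. Therefore each term ${y_j}_+\binom dj u^j(1-u)^{d-j}$ lies in $\Q[t]$, and so does their finite sum $\cB_{d,U}$.

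The main point requiring care is the variance bound. This uses $u(1-u)\le 1/4$, which produces $\Var(Y)\le U^2/d$ and hence the stated constant $U/\sqrt d$ with no loss. The rest of the argument is routine.
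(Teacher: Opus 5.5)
Your proposal is correct and follows essentially the same route as the paper: the binomial representation $\cB_{d,U}(t)=\E(Y_+)$ with $\E Y=t$ and $\Var(Y)=4U^2u(1-u)/d\le U^2/d$, Jensen's inequality for the lower bound, the $1$-Lipschitz property of $s\mapsto s_+$ together with Cauchy--Schwarz for the upper bound, and rationality read off directly from the definition.
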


\begin{proof}
Let $X\sim\operatorname{Bin}(d,u)$, that is,
$\mathbb P(X=j)=\binom dj u^j(1-u)^{d-j}$ for $0\le j\le d$. Set $Y\=U\bigl(\frac{2X}{d}-1\bigr)$. Then
$\cB_{d,U}(t)=\E(Y_+)$, $\E Y=t$, and
$\Var(Y)=4U^2u(1-u)/d\le U^2/d$. 
Since $x\mapsto x_+$ is convex, Jensen's inequality gives $\cB_{d,U}(t)-t_+=\E(Y_+)-(\E Y)_+\ge 0$; since $x\mapsto x_+$ is $1$-Lipschitz, $\E(Y_+)-(\E Y)_+\le\E\abs{Y-\E Y}$, and the Cauchy--Schwarz inequality then gives $\E\abs{Y-\E Y}\le\sqrt{\Var(Y)}\le U/\sqrt d$.
Finally $\cB_{d,U}\in \Q[t]$ follows directly from the definition.
\end{proof}

Combining the preceding three lemmas yields the flattening estimate used
below.

\begin{prop}[Positive-part flattening]
\label{p:polynomial-flatten}
Let $H\in\cF_\Q$ satisfy
$\int\!H\,\mathrm{d}m_G=0$, and let $\theta$, $\eta>0$ and
$B\in\Q_{>0}$ satisfy $\wnorm{H}\le B$.
Suppose that $M\in\N$ satisfies
\begin{equation}\label{eq:recursive birkhoff average}
\sup_G M^{-1}S_MH<\theta
\quad\text{and}\quad
M^{-1/2}B< \eta/2.
\end{equation}
Set $d\=1+\bigl\lceil(2B/\eta)^2\bigr\rceil$, and define
\begin{equation}\label{eq:polynomial flatten correction}
P_M\=M^{-1}S_MH,\qquad
p_M\=\int\!\cB_{d,B}(P_M)\,\mathrm{d}m_G,\qquad
Q_M\=p_M-\cB_{d,B}(P_M).
\end{equation}
Then the following hold:
\begin{enumerate}[label=\rm{(\roman*)}]
\smallskip
\item
$0\le p_M<\eta$.
\smallskip
\item
$\norm{Q_M}_\infty<\theta+\eta$ and, for every $K\in\N$,
\begin{equation*}\sup_G K^{-1}S_K(H+Q_M)<
\eta+\frac{(M-1)B}{K}.\end{equation*}
In particular, $Q(T,H+Q_M)<\eta$.
\smallskip
\item
$H+Q_M\in\cF_\Q$,
$\int (H+Q_M)\,\mathrm{d}m_G=0$ and
$\wnorm{H+Q_M}<\bigl(1+2^d\bigr)B+\eta$.
\end{enumerate}
\end{prop}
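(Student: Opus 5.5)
The proof rests on one decomposition. First I would check that the Bernstein approximation applies. Since $\norm{P_M}_\infty\le\norm{H}_\infty\le\wnorm{H}\le B$, the function $P_M$ takes values in $[-B,B]$. So Lemma~\ref{lem:bernstein approximation} gives
\begin{equation*}
\cB_{d,B}(P_M)=(P_M)_+ + e,\qquad 0\le e\le B/\sqrt d<\eta/2,
\end{equation*}
where the last inequality holds because $d>(2B/\eta)^2$. Throughout, $(P_M)_+$ denotes the positive part of $P_M$ taken pointwise.

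For (i), integrating the decomposition gives $p_M=\int(P_M)_+\,\mathrm{d}m_G+\int e\,\mathrm{d}m_G\ge0$. For the upper bound I would use $\int(P_M)_+\,\mathrm{d}m_G\le\norm{P_M}_{L^1}\le\norm{P_M}_{L^2}$. Lemma~\ref{lem:L2 estimate} bounds the last quantity by $M^{-1/2}\wnorm{H}\le M^{-1/2}B<\eta/2$, using the hypothesis \eqref{eq:recursive birkhoff average}. Hence $p_M<\eta/2+\eta/2=\eta$.

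For (ii), the sup bound comes first. From the decomposition, $Q_M=p_M-(P_M)_+-e$. This gives $Q_M\le p_M<\eta$. It also gives $Q_M\ge-(P_M)_+-e>-\theta-\eta/2$, since $P_M<\theta$ everywhere by \eqref{eq:recursive birkhoff average}. Together these yield $\norm{Q_M}_\infty<\theta+\eta$.

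The main step is the Birkhoff-sum estimate, and here Lemma~\ref{lem:finite coboundary}(i) is the key. With $U\=U_{M,H}$ it gives $H=P_M+U-U\circ T$, and therefore
\begin{equation*}
H+Q_M=\bigl(P_M-(P_M)_+\bigr)-e+p_M+(U-U\circ T)\le p_M+U-U\circ T,
\end{equation*}
because $P_M-(P_M)_+\le0$ and $e\ge0$. Summing along an orbit telescopes the coboundary, so
\begin{equation*}
K^{-1}S_K(H+Q_M)\le p_M+K^{-1}\bigl(U-U\circ T^K\bigr)\le p_M+2\norm{U}_\infty/K.
\end{equation*}
Lemma~\ref{lem:finite coboundary}(ii) gives $\norm{U}_\infty\le(M-1)B/2$, and $p_M<\eta$ by (i), so the bound becomes $\eta+(M-1)B/K$ with strict inequality.

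For the statement about $Q(T,H+Q_M)$ I would argue directly rather than let $K\to\infty$. The coboundary integrates to zero against every $\mu\in\cM(G,T)$, so $\int(H+Q_M)\,\mathrm{d}\mu\le p_M<\eta$ for every such $\mu$. The supremum over the weak$^*$ compact set $\cM(G,T)$ is attained, so it is also at most $p_M<\eta$, which gives the strict inequality.

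For (iii), membership in $\cF_\Q$ follows from Lemma~\ref{lem:rational fourier closure}. The function $P_M$ lies in $\cF_\Q$ because $M^{-1}\in\Q$. Since $B\in\Q$, Lemma~\ref{lem:bernstein approximation} gives $\cB_{d,B}\in\Q[t]$, so $\cB_{d,B}(P_M)\in\cF_\Q$. The constant $p_M$ is its trivial-character coefficient and is therefore rational. The mean-zero property holds because $\int Q_M\,\mathrm{d}m_G=0$ by construction and $\int H\,\mathrm{d}m_G=0$ by hypothesis.

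For the Wiener norm I would set $u=(P_M+B)/(2B)$ and note that $1-u=(B-P_M)/(2B)$. Both have $A$-norm at most $(B+\wnorm{P_M})/(2B)\le1$, using $\wnorm{P_M}\le\wnorm H\le B$ by Lemma~\ref{lem:rational fourier closure}. Submultiplicativity of the $A$-norm then bounds $\wnorm{\cB_{d,B}(P_M)}$ by
\begin{equation*}
\sum_{j=0}^{d}(y_j)_+\binom dj\le B\,2^d.
\end{equation*}
Hence $\wnorm{H+Q_M}\le B+p_M+B2^d<(1+2^d)B+\eta$. Every step here is routine once the decomposition $H=P_M+U-U\circ T$ is in place; the only delicate point is the sign bookkeeping in (ii).
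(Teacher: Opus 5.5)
Your proposal is correct and follows the paper's proof essentially step for step. It uses the same Bernstein error bound $B/\sqrt d<\eta/2$, the same $L^2$ estimate giving $p_M<\eta$, the same inequality $H+Q_M\le p_M+U_{M,H}-U_{M,H}\circ T$ from Lemma~\ref{lem:finite coboundary} for both the Birkhoff-sum bound and the invariant-measure bound, and the same Wiener-norm estimate via $(P_M+B)/(2B)$ and submultiplicativity; your appeal to attainment of the supremum over $\cM(G,T)$ is harmless but unnecessary, since every invariant integral is already at most $p_M<\eta$.
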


\begin{proof}
(i) Since $\wnorm{P_M}\le \wnorm{H}\le B$, we have
$\norm{P_M}_\infty\le B$. Lemma~\ref{lem:bernstein approximation} gives
$0\le\cB_{d,B}(P_M)-(P_M)_+\le B/\sqrt{d}<\eta/2$, thus $p_M\ge 0$ and
\begin{equation*}
p_M=\int\!\cB_{d,B}(P_M)\,\mathrm{d}m_G
\le \int\!\Bigl(\abs{P_M}+\frac{\eta}{2}\Bigr)\,\mathrm{d}m_G
\le \norm{P_M}_{L^2}+\frac{\eta}{2}
<\eta,
\end{equation*}
where the middle inequality follows from the Cauchy--Schwarz inequality and $m_G(G)=1$,
while the last inequality follows from Lemma~\ref{lem:L2 estimate} together with
\eqref{eq:recursive birkhoff average}, which give
$\norm{P_M}_{L^2}\le M^{-1/2}\wnorm{H}\le M^{-1/2}B<\eta/2$.

\smallskip
(ii) By \eqref{eq:recursive birkhoff average}, $P_M<\theta$.
Together with Lemma~\ref{lem:bernstein approximation},
$\cB_{d,B}(P_M)\in [0,\theta+\eta/2)$. Since $p_M\in [0,\eta)$,
we have $-(\theta+\eta/2)<Q_M <\eta$ and thus
$\norm{Q_M}_\infty<\theta +\eta$. By
Lemma~\ref{lem:finite coboundary} and
Lemma~\ref{lem:bernstein approximation},
\begin{equation*}
H+Q_M = p_M+\bigl(P_M-\cB_{d,B}(P_M)\bigr) +U_{M,H}-U_{M,H}\circ T \le p_M+U_{M,H}-U_{M,H}\circ T.
\end{equation*}
Hence, for every $K\in\N$,
\begin{equation*}
K^{-1}S_K(H+Q_M)
\le p_M + \frac{\bigl(U_{M,H}-U_{M,H}\circ T^K\bigr)}{K} 
\le p_M+\frac{2\norm{U_{M,H}}_\infty}{K}
\le \eta+\frac{(M-1)B}{K}.
\end{equation*}
Let $\nu\in\cM(G,T)$. By $T$-invariance,
$\int (H+Q_M)\,\mathrm{d}\nu
\le\int (p_M+U_{M,H}-U_{M,H}\circ T)\,\mathrm{d}\nu=
p_M<\eta$.
Hence $Q(T,H+Q_M)<\eta$.

\smallskip
(iii)
Since $B\in\Q$, Lemma~\ref{lem:bernstein approximation} shows that
$\cB_{d,B}\in \Q[t]$. By Lemma~\ref{lem:rational fourier closure},
$\cB_{d,B}(P_M)\in \cF_{\Q}$. Its Haar integral $p_M$ is therefore rational, being its Fourier
coefficient at the trivial character.
Consequently, $H+Q_M=H+p_M-\cB_{d,B}(P_M)\in\cF_\Q$. By \eqref{eq:polynomial flatten correction} and
$\int\!H\,\mathrm{d}m_G=0$, we have
$\int (H+Q_M)\,\mathrm{d}m_G=0$.

Set $V\=(P_M+B)/(2B)$.
Since $\wnorm{P_M}\le \wnorm{H}\le B$, we have
$\wnorm{V}\le1$ and $\wnorm{1-V}\le1$. Hence, by
\eqref{eq:bernstein approximation}, and \eqref{eq:wiener norm}, we have
\begin{equation*}
\wnorm{\cB_{d,B}(P_M)}\le
\sum_{j=0}^d {y_j}_+\binom dj
\wnorm{V}^j\wnorm{1-V}^{d-j}\le
B\sum_{j=0}^d\binom dj=
2^dB.
\end{equation*}
Therefore, 
$\wnorm{H+Q_M}\le
\wnorm{H}+p_M+\wnorm{\cB_{d,B}(P_M)}<
B+\eta+2^dB=(1+2^d)B+\eta$.
\end{proof}

\section{The explicit construction}
\label{s:induction}

In this section, we construct the sequence $\{F_n\}_{n\in\N_0}$ using
Propositions~\ref{p:separation} and~\ref{p:polynomial-flatten}.
At each step, we first add a detector to enlarge the class of invariant measures on which a negative gap is obtained, and then apply the
flattening procedure to control the positive contribution introduced at the new step without destroying the gaps obtained previously.

\subsection{Scales and parameter choices}\label{ss:parameter pre}

Suppose $n\in\N$. We set $\rho_n\=2^{-n}$ as the detection scale and
consider $\cD_n\=S_{\rho_n}$ (cf.~\eqref{eq:defi of S_rho}). Recall
from Proposition~\ref{p:separation} that
\begin{equation}\label{ineq:properties of Dn recalled}
\int\!\cD_n\,\mathrm{d}m_G=0,
\qquad
\norm{\cD_n}_\infty<1,
\qquad
\wnorm{\cD_n}<1,
\qquad
\sup_{x\in G}\cD_n(x)<\frac{\rho_n}{4},
\end{equation}
and that for each $\nu\in\cM(G,T)$ satisfying $D(\nu)\ge\rho_n$,
\begin{equation}\label{ineq:Dn as detector}
\int\!\cD_n\,\mathrm{d}\nu
<-\frac{\rho_n}{4}.
\end{equation}

As $\rho_n\downarrow0$, the detectors $\cD_n$
separate $m_G$ from the nested sets
$\{\nu\in\cM(G,T):D(\nu)\ge\rho_n\}$,
whose union is $\cM(G,T)\smallsetminus\{m_G\}$. At step $n$, we add the
scaled detector $a_n\cD_n$, while the choice of $a_n$ ensures that the
new perturbation does not destroy the gaps created at previous steps.

Starting with $a_1\=\frac14$ and $\delta_1\=a_1\rho_1$, we
inductively define $a_{n+1}\=\delta_n/8$ and
$\delta_{n+1}\=a_{n+1}\rho_{n+1}$ for each $n\in\N$. Explicitly,
for each $n\in\N$, we have
\begin{equation}
\label{eq:defi of a}
a_n=2^{-\bigl(\frac{n(n-1)}2+3n-1\bigr)}
\quad\text{and}\quad
\delta_n=
2^{-\bigl(\frac{n(n-1)}2+4n-1\bigr)}.
\end{equation}
For each $n\in\N_0$, we also consider
\begin{equation}
\label{eq:b}
b_n\=\frac{\delta_{n+1}}{64}
=2^{-\bigl(\frac{n(n+1)}2+4n+9\bigr)}.
\end{equation}

\subsection{The inductive construction}\label{ss:inductive construction}

Let $F_0$ be the constant zero function on $G$, and set
$B_0\=0$, $\Theta_0\=0$.
Then \eqref{eq:recursive property} and \eqref{eq:inductive birkhoff bound} hold for
$n=1$: indeed $F_0\in\cF_\Q$, $\int\!F_0\,\mathrm{d}m_G=0$, $Q(T,F_0)=0<b_0=2^{-9}$,
$\norm{F_0}_{A}=0=B_0$, and $\sup_G K^{-1}S_KF_0=0<b_0$ for every $K\in\N$.

Suppose $(F_{n-1},\Theta_{n-1},B_{n-1})$ with $B_{n-1},\Theta_{n-1}\in \Q$ has been constructed for some
$n\in\N$ and satisfies
\begin{equation}
\label{eq:recursive property}
F_{n-1}\in\cF_\Q,
\qquad \int\!F_{n-1}\,\mathrm{d}m_G=0,
\qquad Q(T,F_{n-1})<b_{n-1},
\qquad \wnorm{F_{n-1}}\le B_{n-1},
\end{equation}
and, for every $K\in\N$,
\begin{equation}
\label{eq:inductive birkhoff bound}
\sup_G K^{-1}S_KF_{n-1}
< b_{n-1}+\frac{\Theta_{n-1}}{K}.
\end{equation}
We now construct $(F_n,\Theta_n,B_n)$ with $B_{n},\Theta_{n}\in \Q$.

We first add the detector and form the intermediate potential
\begin{equation}\label{eq:from Fn-1 to Hn}
H_n\=F_{n-1}+a_n\cD_n.
\end{equation}
By \eqref{ineq:Dn as detector}, we see that for each
$\nu\in\cM(G,T)$ satisfying $D(\nu)\ge\rho_n$,
\begin{equation}\label{eq:exposure property of Hn}
\int\!H_n\,\mathrm{d}\nu<
Q(T,F_{n-1})-\frac{\delta_n}{4}
<-\frac{15}{64}\delta_n,
\end{equation}
where the last inequality follows from the hypothesis that
$Q(T,F_{n-1})<b_{n-1}$.

Set $\oB_n\=B_{n-1}+a_n\in\Q$. By \eqref{eq:recursive property},
\eqref{eq:from Fn-1 to Hn} and \eqref{ineq:properties of Dn recalled} we have
$\wnorm{H_n}<\oB_n$, and, for every $K\in\N$,
\begin{equation}
\label{eq:birkhoff bound for Hn}
\sup_G K^{-1}S_KH_n
<b_{n-1}+\frac{\Theta_{n-1}}{K}
+\frac{\delta_n}{4}.
\end{equation}

To make sure that $Q(T,F_n)$ is sufficiently small relative to
$\delta_{n+1}$, we use the flattening procedure to construct $F_n$
from $H_n$. Note that
$\int\!H_n\,\mathrm{d}m_G=0$ by the induction hypothesis and
\eqref{ineq:properties of Dn recalled}. We now apply
Proposition~\ref{p:polynomial-flatten} to $H_n$ with
$2b_{n-1}+\delta_n/4$ in place of $\theta$, with $b_n$ in place of $\eta$,
with $\oB_n$ in place of $B$, and with $M_n$ in place of $M$.
Accordingly, we set
\begin{equation}\label{eq:Mn}
M_n\=1+\Bigl\lceil
\max\Bigl\{\Theta_{n-1}/b_{n-1},\,
\bigl(2\oB_n/b_n\bigr)^2
\Bigr\}\Bigr\rceil.
\end{equation}
It follows from \eqref{eq:birkhoff bound for Hn} and \eqref{eq:Mn}
that $\sup_G M_n^{-1}S_{M_n}H_n<
2b_{n-1}+ \delta_n/4$.
Moreover, by \eqref{eq:Mn}, $M_n^{-1/2}\oB_n< b_n/2$. We then set the Bernstein polynomial degree
\begin{equation}\label{eq:Bernstein degree}
d_n\=1+\Bigl\lceil
\bigl(2\oB_n/b_n\bigr)^2
\Bigr\rceil,
\end{equation}
and consider
\begin{equation*}C_n\=\int\!\cB_{d_n,\oB_n}
\bigl(M_n^{-1}S_{M_n}H_n\bigr)\,\mathrm{d}m_G,\qquad
Q_n\=C_n-\cB_{d_n,\oB_n}
\bigl(M_n^{-1}S_{M_n}H_n\bigr).\end{equation*}
We then define
\begin{equation}\label{eq:recursion construction}
F_n\=H_n+Q_n=
H_n+C_n-\cB_{d_n,\oB_n}
\bigl(M_n^{-1}S_{M_n}H_n\bigr).
\end{equation}
By Proposition~\ref{p:polynomial-flatten}, we see that
$F_n\in\cF_\Q$, $C_n<b_n$,
$Q(T,F_n)<b_n$. Moreover
\begin{equation}\label{eq:Qn infty bound}
\norm{Q_n}_\infty<
2b_{n-1}+\frac{\delta_n}{4}+b_n,
\end{equation}
and, for every $K\in\N$,
\begin{equation}\label{eq:Fn birkhoff bound}
\sup_G K^{-1}S_KF_n <
b_n+\frac{(M_n-1)\oB_n}{K}.
\end{equation}
Moreover, $\wnorm{F_n}< (1+2^{d_n})\oB_n+b_n$.
Thus, we define\begin{equation}
\label{eq:B Theta n}
\begin{aligned}
B_n&\= (1+2^{d_n})\oB_n+b_n
=(1+2^{d_n})(B_{n-1}+a_n)+b_n
\in \Q\\
\Theta_n&\=(M_n-1)\oB_n
=(M_n-1)(B_{n-1}+a_n)\in\Q.
\end{aligned}
\end{equation}
This completes the inductive step.

\subsection{Exposure}
\label{ss:exposure}
In this subsection, we prove Theorem~\ref{thm:main} and record the properties of $\{F_n\}_{n\in \N_0}$ for later reference.

\begin{prop}[Properties of the constructed sequence]
\label{p:recursive construction}
Let $\{F_n\}_{n\in\N_0}$, $\{\Theta_n\}_{n\in\N_0}$ and
$\{B_n\}_{n\in\N_0}$ be the sequences inductively defined in
Subsection~\ref{ss:inductive construction}.
Then, for each $n\in\N$,
\begin{equation}\label{eq:basic properties of Fn}
F_n\in\cF_\Q,
\qquad
\int\!F_n\,\mathrm{d}m_G=0,
\qquad
Q(T,F_n)<b_n
\quad\text{and}\quad
\wnorm{F_n}<B_n
\end{equation}
and, for every $K\in\N$,
\begin{equation}\label{eq:finite Birkhoff estimate of Fn}
\sup_G K^{-1}S_KF_n<
b_n+\frac{\Theta_n}{K}.
\end{equation}
\end{prop}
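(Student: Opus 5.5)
The proof will be an induction on $n\in\N$. The invariant is the pair of conditions \eqref{eq:recursive property}--\eqref{eq:inductive birkhoff bound} at index $n-1$, with $B_{n-1},\Theta_{n-1}\in\Q$. The base case $n=1$ (index $0$) was checked directly for $F_0\equiv0$, $B_0=\Theta_0=0$ at the start of Subsection~\ref{ss:inductive construction}. So the task is to show that if \eqref{eq:recursive property}--\eqref{eq:inductive birkhoff bound} hold at index $n-1$, then the construction is well defined and \eqref{eq:basic properties of Fn}--\eqref{eq:finite Birkhoff estimate of Fn} hold at index $n$. The statement at index $n$ is \eqref{eq:recursive property}--\eqref{eq:inductive birkhoff bound} at index $n$, with the strict inequality $\wnorm{F_n}<B_n$ implying the non-strict one needed later. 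This closes the induction.

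For the inductive step I would first verify the hypotheses of Proposition~\ref{p:polynomial-flatten} for $H=H_n$, $\theta=2b_{n-1}+\delta_n/4$, $\eta=b_n$, $B=\oB_n$ and $M=M_n$.
\begin{itemize}
\item $H_n\in\cF_\Q$ by Lemma~\ref{lem:rational fourier closure}, since $F_{n-1}$, $\cD_n\in\cF_\Q$ and $a_n\in\Q$.
\item $\int H_n\,\mathrm{d}m_G=0$ by the induction hypothesis together with \eqref{ineq:properties of Dn recalled}.
\item $\oB_n=B_{n-1}+a_n\in\Q_{>0}$, and $\wnorm{H_n}\le\wnorm{F_{n-1}}+a_n\wnorm{\cD_n}<\oB_n$.
\end{itemize}

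Next comes the Birkhoff bound \eqref{eq:birkhoff bound for Hn}. Apply \eqref{eq:inductive birkhoff bound} to $F_{n-1}$, and use $\sup\cD_n<\rho_n/4$, which gives $\sup_G K^{-1}S_K(a_n\cD_n)<a_n\rho_n/4=\delta_n/4$. Taking $K=M_n$, the choice $M_n-1\ge\Theta_{n-1}/b_{n-1}$ in \eqref{eq:Mn} gives $\Theta_{n-1}/M_n<b_{n-1}$. Here $b_{n-1}>0$, and the degenerate case $\Theta_0=0$ is harmless. This yields $\sup_G M_n^{-1}S_{M_n}H_n<\theta$. Similarly $M_n>(2\oB_n/b_n)^2$ gives $M_n^{-1/2}\oB_n<b_n/2=\eta/2$. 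The degree $d_n$ in \eqref{eq:Bernstein degree} is exactly the $d$ of Proposition~\ref{p:polynomial-flatten}, and $F_n=H_n+Q_{M_n}$.

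The conclusions then follow from Proposition~\ref{p:polynomial-flatten}:
\begin{itemize}
\item Part~(iii) gives $F_n\in\cF_\Q$, $\int F_n\,\mathrm{d}m_G=0$ and $\wnorm{F_n}<(1+2^{d_n})\oB_n+b_n=B_n$.
\item Part~(ii) gives $Q(T,F_n)<b_n$ and $\sup_G K^{-1}S_KF_n<b_n+(M_n-1)\oB_n/K=b_n+\Theta_n/K$.
\end{itemize}
Rationality of $B_n$ and $\Theta_n$ is immediate from \eqref{eq:B Theta n}, since $a_n$, $b_n$, $B_{n-1}\in\Q$ and $d_n$, $M_n\in\N$.

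The main point needing care, rather than a real obstacle, is the bookkeeping in \eqref{eq:Mn}. The bound $\Theta_{n-1}/K$ must be absorbed into a second $b_{n-1}$ so that $\theta$ matches. This requires the inequality $\Theta_{n-1}/M_n<b_{n-1}$ to be strict, and it is, because of the $+1$ in the definition of $M_n$.
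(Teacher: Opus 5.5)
Your proposal is correct and follows the paper's own argument. The paper proves this proposition in one line, saying these are the conclusions of the inductive step in Subsection~\ref{ss:inductive construction}; you unpack that step by checking the hypotheses of Proposition~\ref{p:polynomial-flatten} with $\theta=2b_{n-1}+\delta_n/4$, $\eta=b_n$, $B=\oB_n$, $M=M_n$ and reading off parts (ii) and (iii).

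One small inaccuracy is in your closing remark. Strictness of $\Theta_{n-1}/M_n<b_{n-1}$ is not needed, because \eqref{eq:birkhoff bound for Hn} is already strict, so $\Theta_{n-1}/M_n\le b_{n-1}$ would suffice. Where the $+1$ in \eqref{eq:Mn} actually matters is in securing the strict hypothesis $M_n^{-1/2}\oB_n<b_n/2$.
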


\begin{proof}
These are precisely the conclusions of the induction in
Subsection~\ref{ss:inductive construction}.
\end{proof}

\begin{prop}[Exposure estimate]
\label{p:negative value estimate}
Let $F_n$ be as in Proposition~\ref{p:recursive construction}.
If $n\in\N$ and $\nu\in\cM(G,T)$ satisfy $D(\nu)\ge 2^{-n}$, then
\begin{equation}\label{eq:analytic stage gap}
\int\!F_n\,\mathrm{d}\nu
< -\frac7{32}\delta_n.
\end{equation}
\end{prop}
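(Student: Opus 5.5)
We decompose $F_n=H_n+Q_n$ as in \eqref{eq:recursion construction} and estimate the two integrals against $\nu$ separately. By \eqref{eq:exposure property of Hn}, which applies because $D(\nu)\ge\rho_n=2^{-n}$, we already have $\int H_n\,\mathrm{d}\nu<-\frac{15}{64}\delta_n$. It therefore suffices to show that $\int Q_n\,\mathrm{d}\nu\le\frac{1}{64}\delta_n$; then $\int F_n\,\mathrm{d}\nu<-\frac{14}{64}\delta_n=-\frac{7}{32}\delta_n$.

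To bound $\int Q_n\,\mathrm{d}\nu$, recall that $Q_n=C_n-\cB_{d_n,\oB_n}(M_n^{-1}S_{M_n}H_n)$. By Lemma~\ref{lem:bernstein approximation}, applicable because $\norm{M_n^{-1}S_{M_n}H_n}_\infty\le\wnorm{H_n}<\oB_n$, the Bernstein term satisfies $\cB_{d_n,\oB_n}(P)\ge P_+\ge0$ pointwise. Hence
\begin{equation*}
Q_n\le C_n<b_n
\end{equation*}
everywhere, the bound $C_n<b_n$ being Proposition~\ref{p:polynomial-flatten}(i) with $\eta=b_n$. So $\int Q_n\,\mathrm{d}\nu<b_n=\delta_{n+1}/64$. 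Since $\delta_{n+1}=a_{n+1}\rho_{n+1}=\delta_n\rho_{n+1}/8<\delta_n$, this gives $\int Q_n\,\mathrm{d}\nu<\delta_n/64$, which is what we need.

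The only delicate point is to use the one-sided bound $Q_n\le C_n$ rather than the two-sided bound \eqref{eq:Qn infty bound}. The latter includes the term $\delta_n/4$ and would destroy the gap. The one-sided bound costs at most $b_n$ and needs no invariance of $\nu$ beyond what already enters \eqref{eq:exposure property of Hn}. We should also check that the hypothesis $Q(T,F_{n-1})<b_{n-1}$ underlying \eqref{eq:exposure property of Hn} holds at every stage, including $n=1$; this is guaranteed by Proposition~\ref{p:recursive construction} together with the base case $F_0=0$. The arithmetic $\frac{15}{64}-\frac1{64}=\frac{7}{32}$ then closes the argument.
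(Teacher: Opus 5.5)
Your proposal is correct and follows the paper's argument. The paper likewise bounds $\int F_n\,\mathrm{d}\nu\le\int H_n\,\mathrm{d}\nu+C_n$, using the pointwise nonnegativity of the Bernstein term, and then combines $\int H_n\,\mathrm{d}\nu<-\frac{15}{64}\delta_n$ with $C_n<b_n$ and $b_n\le\delta_n/2^{10}$; your cruder bound $b_n<\delta_n/64$ gives the same constant $\frac{7}{32}$.
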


\begin{proof}
Recall from \eqref{eq:from Fn-1 to Hn} and
\eqref{eq:exposure property of Hn} that
$\int\!H_n\,\mathrm{d}\nu<-\frac{15}{64}\delta_n$.
It then follows from \eqref{eq:recursion construction} and
Proposition~\ref{p:polynomial-flatten}~(i) that
\begin{equation*}
\int\!F_n\,\mathrm{d}\nu
\le
\int\!H_n\,\mathrm{d}\nu+C_n
<
-\frac{15}{64}\delta_n+b_n
<
-\frac7{32}\delta_n,
\end{equation*}
where the last inequality follows from the fact that
$b_n\le\delta_n/2^{10}$ (cf.~\eqref{eq:b} and
\eqref{eq:defi of a}).
\end{proof}

Proposition~\ref{p:negative value estimate} gives a negative gap at each
fixed step. We next estimate the change at each subsequent step.

\begin{prop}[Increment estimate]
\label{p:increment estimate}
Let $\{a_n\}_{n\in\N}$ be the sequence defined by
\eqref{eq:defi of a}. Then, for every $n\in\N$,
\begin{equation}\label{eq:increment estimate}
\norm{F_n-F_{n-1}}_\infty
<\frac87a_n.
\end{equation}
Consequently, $\{F_n\}$ converges uniformly to a function $F\in C(G,\R)$, and
\begin{equation*}\Normbig{F-F_N}_{\infty}<\frac{16}{105}\delta_N\end{equation*}
for every $N\in\N$.
\end{prop}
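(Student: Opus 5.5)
We will work directly from the recursion \eqref{eq:recursion construction}: $F_n-F_{n-1}=a_n\cD_n+Q_n$. By \eqref{ineq:properties of Dn recalled}, $\norm{a_n\cD_n}_\infty<a_n$, and by \eqref{eq:Qn infty bound}, $\norm{Q_n}_\infty<2b_{n-1}+\delta_n/4+b_n$. So the plan is to show
\begin{equation*}
2b_{n-1}+\frac{\delta_n}{4}+b_n\le\frac17a_n
\end{equation*}
using the closed formulas \eqref{eq:defi of a} and \eqref{eq:b}.

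For this estimate we use the definitions. We have $\delta_n=a_n\rho_n=2^{-n}a_n\le a_n/2$, hence $\delta_n/4\le a_n/8$. Next, $b_{n-1}=\delta_n/64\le a_n/128$, and $b_n=\delta_{n+1}/64\le b_{n-1}$. The sum is therefore at most $a_n(1/8+3/128)=19a_n/128$. This is slightly larger than $a_n/7\approx 18.3a_n/128$, so the crude bound fails and we must be more careful.

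The refinement uses the factor $2^{-n}$ more fully. For $n\ge2$ we have $\delta_n\le a_n/4$, so the sum is at most $a_n(1/16+3/256)<a_n/7$. For $n=1$ we compute directly: $a_1=1/4$, $\delta_1=1/8$, $b_0=2^{-9}$, and $b_1=\delta_2/64$, where $\delta_2=a_2/4=\delta_1/32=2^{-8}$. Then $2b_0+\delta_1/4+b_1=2^{-8}+2^{-5}+2^{-14}$, which is below $1/28$. In both cases
\begin{equation*}
\norm{F_n-F_{n-1}}_\infty<a_n+\frac{a_n}{7}=\frac87a_n.
\end{equation*}
This is the only delicate step; the main obstacle is simply getting the constants right, and the $n=1$ case must be handled separately.

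For the consequences, note that $a_{n+1}=\delta_n/8=2^{-n}a_n/8$, so $\sum_n a_n<+\infty$. The sequence $\{F_n\}$ is therefore uniformly Cauchy and converges to some $F\in C(G,\R)$, each $F_n$ being continuous as a trigonometric polynomial. For the tail estimate,
\begin{equation*}
\norm{F-F_N}_\infty\le\frac87\sum_{n>N}a_n .
\end{equation*}
Here $a_{N+1}=\delta_N/8$, and the ratios satisfy $a_{n+1}/a_n=2^{-n}/8\le1/16$ for $n\ge N+1$. Hence $\sum_{n>N}a_n\le\frac{\delta_N}{8}\cdot\frac{16}{15}=\frac{2\delta_N}{15}$. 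Multiplying by $8/7$ gives $\frac{16}{105}\delta_N$. Strictness follows from the strict inequality in \eqref{eq:increment estimate}, since the geometric bound is strict after the first term.
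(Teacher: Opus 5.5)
Your proof is correct and follows the paper's argument: write $F_n-F_{n-1}=a_n\cD_n+Q_n$, bound the two pieces using $\norm{\cD_n}_\infty<1$ and the bound on $\norm{Q_n}_\infty$, then sum the tail geometrically using $a_{n+1}/a_n=2^{-(n+3)}\le 1/16$. The only difference is bookkeeping: the paper uses the sharper bound $b_n=2^{-(n+10)}\delta_n\le\delta_n/2048$ instead of your $b_n\le b_{n-1}$, which gives $2b_{n-1}+\delta_n/4+b_n<\frac27\delta_n\le\frac17 a_n$ uniformly in $n$ and so avoids your separate $n=1$ check.
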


\begin{proof}
Recall that $\cD_n=S_{\rho_n}$ (cf.~Subsection~\ref{ss:parameter pre}).
It follows from \eqref{eq:Qn infty bound},
\eqref{eq:recursion construction}, and
\eqref{eq:from Fn-1 to Hn} that
\begin{equation*}
\norm{F_n-F_{n-1}-a_n\cD_n}_\infty
<2b_{n-1}+\frac{\delta_n}{4}+b_n.
\end{equation*}
By \eqref{ineq:properties of Dn recalled}, we see that
$\norm{F_n-F_{n-1}}_\infty<
a_n+2b_{n-1}+ \delta_n/4+b_n$.
Moreover, by \eqref{eq:b} and \eqref{eq:defi of a},
\begin{equation*}2b_{n-1}+\frac{\delta_n}{4}+b_n
\le\biggl(\frac1{32}+\frac14+\frac1{2048}\biggr)\delta_n
<\frac27\delta_n.\end{equation*}
Consequently, \begin{equation*}\norm{F_n-F_{n-1}}_\infty
<a_n+\frac27\delta_n
=a_n+\frac27 2^{-n}a_n
<\frac87a_n.\end{equation*}
By \eqref{eq:defi of a}, $\sum_{n=1}^{+\infty} 2^{-3n+1}<+\infty$. Hence $\{F_n\}$ converges uniformly to some $F\in C(G,\R)$. By \eqref{eq:defi of a}, we have $a_{n+1}/a_{n}=2^{-(n+3)}\le 1/16$. Hence
\begin{equation*}\Normbig{F-F_N}_\infty
<\frac87\sum_{n>N}a_n
\le \frac87 \cdot\frac{a_{N+1}}{1-1/16}
\le\frac{16}{105}\delta_N .\qedhere
\end{equation*}
\end{proof}

\begin{prop}[Quantitative exposure bound]
\label{p:exposure bound}
Let $F$ be as in Proposition~\ref{p:increment estimate},
and let $D$ be the separation function in Definition~\ref{def:separation function D}. Then
\begin{equation*}
\int\!F\,\mathrm{d}m_G=0,
\end{equation*}
and for each $\nu\in\cM(G,T)\smallsetminus\{m_G\}$, 
\begin{equation}\label{eq:analytic-exposure-gap}
\int\!F\,\mathrm{d}m_G-
\int\!F\,\mathrm{d}\nu
>2^{-(n(\nu)+1)(n(\nu)+6)/2},
\end{equation}
where $n(\nu)\=\max\bigl\{
1,\,\bigl\lceil\log_2\frac1{D(\nu)}\bigl\rceil\bigr\}$.
\end{prop}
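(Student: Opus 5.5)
The plan is to compare $F$ with a single finite-stage approximant $F_n$, taking $n=n(\nu)$, and to combine the stage gap of Proposition~\ref{p:negative value estimate} with the tail bound of Proposition~\ref{p:increment estimate}.

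\emph{The Haar integral.} By Proposition~\ref{p:recursive construction}, $\int F_n\,\mathrm{d}m_G=0$ for every $n\in\N$. By Proposition~\ref{p:increment estimate}, $F_n\to F$ uniformly. Passing to the limit under the integral gives $\int F\,\mathrm{d}m_G=0$. So the left-hand side of \eqref{eq:analytic-exposure-gap} equals $-\int F\,\mathrm{d}\nu$.

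\emph{Choosing the stage.} Fix $\nu\in\cM(G,T)\smallsetminus\{m_G\}$. By Lemma~\ref{lem:D gauge}, $D(\nu)\in(0,1]$, so $n\=n(\nu)\in\N$ is well defined. We check that $D(\nu)\ge 2^{-n}$ in both cases.
\begin{itemize}
\item If $n=\lceil\log_2(1/D(\nu))\rceil$, then $n\ge\log_2(1/D(\nu))$, which gives $2^{-n}\le D(\nu)$.
\item If $n=1$ because the ceiling is at most $1$, then $\log_2(1/D(\nu))\le 1$, so again $D(\nu)\ge 1/2$.
\end{itemize}
Hence Proposition~\ref{p:negative value estimate} applies and gives $\int F_n\,\mathrm{d}\nu<-\frac7{32}\delta_n$.

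\emph{Passing to the limit.} Since $\nu$ is a probability measure,
\begin{equation*}
\int F\,\mathrm{d}\nu\le\int F_n\,\mathrm{d}\nu+\norm{F-F_n}_\infty
<-\frac{7}{32}\delta_n+\frac{16}{105}\delta_n .
\end{equation*}
A direct computation gives $\frac7{32}-\frac{16}{105}=\frac{735-512}{3360}=\frac{223}{3360}>\frac1{16}$. Therefore
\begin{equation*}
-\int F\,\mathrm{d}\nu>\frac{\delta_n}{16}.
\end{equation*}

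\emph{Matching the exponent.} By \eqref{eq:defi of a},
\begin{equation*}
\frac{\delta_n}{16}=2^{-\left(\frac{n(n-1)}2+4n+3\right)}=2^{-\frac{n^2+7n+6}{2}}=2^{-(n+1)(n+6)/2},
\end{equation*}
which is exactly the bound in \eqref{eq:analytic-exposure-gap}.

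The only delicate point is this arithmetic: the constants $\frac7{32}$ and $\frac{16}{105}$ must leave at least $\frac1{16}$, so that the exponent closes exactly. I do not expect any genuine obstacle beyond this.
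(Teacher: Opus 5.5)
Your proposal is correct and follows the paper's proof: you take $n=n(\nu)$, use $2^{-n(\nu)}\le D(\nu)$ to apply Proposition~\ref{p:negative value estimate}, add the tail bound $\frac{16}{105}\delta_{n(\nu)}$ from Proposition~\ref{p:increment estimate}, and identify $\delta_n/16=2^{-(n+1)(n+6)/2}$. Your explicit check that $\frac{7}{32}-\frac{16}{105}=\frac{223}{3360}>\frac{1}{16}$, and your treatment of the case $n(\nu)=1$, simply spell out steps the paper leaves implicit.
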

\begin{proof}
It follows from Proposition~\ref{p:recursive construction} and the
uniform convergence of $\{F_n\}$ that
$\int\!F\,\mathrm{d}m_G=0$.
Let $\nu\in\cM(G,T)\smallsetminus\{m_G\}$. By Lemma~\ref{lem:D gauge}, $D(\nu)>0$, so $n(\nu)$ is a well-defined
positive integer and
$2^{-n(\nu)}\le D(\nu)$. 
It then follows from
Propositions~\ref{p:negative value estimate} and
\ref{p:increment estimate} that
\begin{equation*}
\int\!F\,\mathrm{d}\nu\le
\int\!F_{n(\nu)}\,\mathrm{d}\nu
+\norm{F-F_{n(\nu)}}_\infty
<-\frac7{32}\delta_{n(\nu)}+\frac{16}{105}\delta_{n(\nu)}
<-\frac1{16}\delta_{n(\nu)}.
\end{equation*}
Note that by \eqref{eq:defi of a}, for each $n\in\N$,
$\frac{\delta_n}{16}=2^{-(n+1)(n+6)/2}$.
Now \eqref{eq:analytic-exposure-gap} follows.
\end{proof}

We can now combine Propositions~\ref{p:recursive construction},
\ref{p:increment estimate} and \ref{p:exposure bound} in order to prove Theorem~\ref{thm:main}:

\begin{proof}[Proof of Theorem~\ref{thm:main}]
By Proposition~\ref{p:recursive construction}, each $F_n\in\cF_\Q$; in particular $F_n$ is
a real-valued trigonometric polynomial on $G$ with rational Fourier coefficients. By
Proposition~\ref{p:increment estimate}, $\{F_n\}$ converges uniformly to a function
$F\in C(G,\R)$. By Proposition~\ref{p:exposure bound},
\begin{equation*}
\int\!F\,\mathrm{d}m_G=0
\qquad\text{and}\qquad
\int\!F\,\mathrm{d}\nu<0
\quad\text{for every }\nu\in\cM(G,T)\smallsetminus\{m_G\}.
\end{equation*}
Consequently $Q(T,F)=\sup_{\mu\in\cM(G,T)}\int\!F\,\mathrm{d}\mu=0$, and this supremum is
attained only at $\mu=m_G$; that is, $\cM_{\max}(F)=\{m_G\}$, so $m_G$ is the unique
$T$-invariant maximizing measure for $F$.
\end{proof}

\section{A uniformly convergent Fourier series}
\label{s:Fourier series}

Each finite approximant $F_n\in\cF_\Q(G)$ has a finite Fourier expansion, and hence all of its Fourier coefficients are determined explicitly by finitely many steps of the construction. It is therefore natural to ask whether the limiting exposing potential can itself be represented by a Fourier series converging in a prescribed frequency order.

To carry this out, we require the dual group $\Gamma=\wh G$ to admit a suitable enumeration compatible with the desired frequency order. Since the preceding construction imposes no ordering on the characters, we first formulate the argument abstractly under some additional assumptions and then apply it to several concrete spaces.

\subsection{A Fourier series with exact Fourier coefficients}

For every $P\in C(G,\R)$ and $J\in\N_0$ we have $\Normbig{P\circ T^J}_\infty=\norm P_\infty$,
since $T^J$ is surjective, and, by the $T$-invariance of $\nu$,
\begin{equation}\label{eq:frequency-copy-invariance}
\int\!P\circ T^J\,\mathrm{d}\nu=\int\!P\,\mathrm{d}\nu
\qquad\text{for every }\nu\in\cM(G,T).
\end{equation}

For each $n\in \N$, set
\begin{equation}\label{eq:polynomial-increment}
\Delta_n\=F_n-F_{n-1}
\quad\text{and}\quad
\alpha_n\=\norm{\Delta_n}_\infty.
\end{equation}
It follows directly from Lemma~\ref{lem:rational fourier closure} that $\Delta_n\in\cF_\Q$. We estimate the size of the increment $\Delta_n$. By
Proposition~\ref{p:increment estimate}, $\alpha_n< (8a_n)/7$ and
\eqref{eq:defi of a}, we have
\begin{equation}\label{eq:alpha summable}
\sum_{n=1}^{+\infty}\alpha_n<\frac{8}{7}\sum_{n=1}^{+\infty}a_n<\frac{8}{7}\sum_{n=1}^{+\infty}2^{-3n+1} <+\infty.
\end{equation}
Since the increments are summable in the uniform norm, we may relocate them into separated
frequency blocks: by \eqref{eq:frequency-copy-invariance}, replacing $\Delta_n$ by
$\Delta_n\circ T^{J}$ changes no invariant integral and no supremum norm, while by
Lemma~\ref{lem:rational fourier closure} it translates the frequency support by $A^{J}$.

Relocation alone, however, does not suffice. A partial sum whose cutoff falls inside the
frequency support of a single $\Delta_n$ may differ from $\Delta_n$ by as much as
$\wnorm{\Delta_n}$, and the Wiener norms $\wnorm{\Delta_n}$ are in general far larger than
the uniform norms $\alpha_n$. We therefore also \emph{split}: we replace $\Delta_n$ by
$ K_n$ relocated copies of $ K_n^{-1}\Delta_n$, placed in disjoint frequency blocks,
where $ K_n$ is large enough that each block has Wiener norm at most $2^{-n}$. The
uniform norm of the sum is unchanged, but no cutoff can now remove more than $2^{-n}$ from a
single block. Concretely, for $n\in\N$ set
\begin{equation}\label{eq:number-of-frequency-copies}
 K_n\=1+\bigl\lceil2^n\wnorm{\Delta_n}\bigr\rceil,
\end{equation}
let $J_{n,s}\in\N_0$, $1\le s\le K_n$, be integers to be specified below, and define
\begin{equation}\label{eq:spread-increment}
\widetilde\Delta_{n,s}\=\frac1{ K_n}\Delta_n\circ T^{J_{n,s}},
\qquad
\widetilde\Delta_n\=\sum_{s=1}^{ K_n}\widetilde\Delta_{n,s}
=\frac1{ K_n}\sum_{s=1}^{ K_n}\Delta_n\circ T^{J_{n,s}}.
\end{equation}
The frequency support of $\Delta_n$ is
\begin{equation}\label{eq:En}
E_n\=\bigl\{\gamma\in\Gamma\smallsetminus\{1\}:\wh{\Delta_n}(\gamma)\neq0\bigr\},
\end{equation}
which is finite, and does not contain the trivial character because
$\int\!\Delta_n\,\mathrm{d}m_G=0$. We index the relocated blocks $A^{J_{n,s}}E_n$ by the
pairs $(n,s)$ in lexicographic order; if $\Delta_n=0$ then $E_n=\emptyset$ and the
corresponding blocks are empty, and we retain them so that the indexing depends only on $n$
and $s$.

\begin{lemma}[Frequency relocation]
\label{lem:frequency relocation}
Let $E\subseteq\Gamma\smallsetminus\{1\}$ be finite and let $M\in\N_0$. Then there exists an integer $J\in\bigl[0,\,M\abs{E}\bigr]$ such that
\begin{equation}\label{eq:frequency relocation}
A^JE\cap\{\gamma_1,\,\ldots,\,\gamma_M\}=\emptyset.
\end{equation}
Equivalently, every character in $A^JE$ occurs after $\gamma_M$ in the prescribed enumeration.
\end{lemma}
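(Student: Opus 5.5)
The plan is a pigeonhole count: for each pair $(\gamma,\gamma_i)$ with $\gamma\in E$ and $1\le i\le M$, at most one exponent $J\ge0$ can satisfy $A^J\gamma=\gamma_i$. Granting this, the set of ``bad'' exponents
\begin{equation*}
\mathcal J\=\bigl\{J\in\N_0: A^JE\cap\{\gamma_1,\,\ldots,\,\gamma_M\}\neq\emptyset\bigr\}
\end{equation*}
has cardinality at most $M\abs{E}$. The interval $[0,M\abs{E}]$ contains $M\abs{E}+1$ integers, so some $J$ in it lies outside $\mathcal J$, and that $J$ satisfies \eqref{eq:frequency relocation}. The degenerate cases $M=0$ or $E=\emptyset$ are immediate, since then $J=0$ works.

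For the key claim, fix a nontrivial $\gamma\in E$ and suppose $A^J\gamma=A^{J'}\gamma$ with $J<J'$. This contradicts Lemma~\ref{lem:distinct character}, which says the characters $\gamma,A\gamma,A^2\gamma,\ldots$ are pairwise distinct because $\gamma\neq1$. Hence for each $\gamma\in E$ the map $J\mapsto A^J\gamma$ is injective. Consequently each of the $M$ characters $\gamma_1,\ldots,\gamma_M$ equals $A^J\gamma$ for at most one $J$. Summing over $\gamma\in E$ gives $\#\mathcal J\le M\abs{E}$.

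The final sentence of the lemma (``equivalently'') follows at once from the enumeration \eqref{eq:enumeration}. Every character in $A^JE$ is nontrivial, because $A$ is injective and $A1=1$, so it equals $\gamma_r$ for a unique $r\in\N$. Avoiding $\{\gamma_1,\ldots,\gamma_M\}$ then means exactly $r>M$.

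There is no real obstacle here. The only point needing care is to use the hypothesis $E\subseteq\Gamma\smallsetminus\{1\}$, which is what makes Lemma~\ref{lem:distinct character} applicable. The bound is also constructive: one can find $J$ by testing $J=0,1,\ldots$ in turn, and this terminates by $M\abs{E}$, which matches the effectiveness claims of Remark~\ref{rem: computable}.
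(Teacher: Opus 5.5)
Your proof is correct and follows essentially the same argument as the paper. Lemma~\ref{lem:distinct character} makes $J\mapsto A^J\gamma$ injective, which bounds the number of bad exponents by $M\abs{E}$; pigeonhole over the $M\abs{E}+1$ integers in $[0,M\abs{E}]$ then gives an admissible $J$; and injectivity of $A$ together with $A1=1$ gives the equivalent reformulation.
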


\begin{proof}
For each $\gamma\in E$ and $i\in\{1,\,\ldots,\,M\}$, by Lemma~\ref{lem:distinct character}, $A^J\gamma=\gamma_i$ holds for at most one $J\in \N_0$. 
Therefore, there are at most $M\abs{E}$ integers $J$ for which $A^JE\cap\{\gamma_1,\,\ldots,\,\gamma_M\} \neq \emptyset$.
Since the $M\abs{E}+1$ integers $0,1,\ldots,M\abs{E}$ therefore cannot all yield a non-empty intersection, at least
one of them satisfies \eqref{eq:frequency relocation}. Finally, $A^J\gamma\neq1$ for
$\gamma\neq1$ by injectivity of $A$, so \eqref{eq:frequency relocation} says exactly that
every character of $A^JE$ occurs after $\gamma_M$ in the enumeration.
\end{proof}

Now we define $J_{n,s}$ recursively. Let $J_{1,1}\=0$. 
Suppose that $J_{a,b}$ has been determined for all
$(a,b)<(n,s)$ in the lexicographical order. We record the Fourier
positions that have already been occupied by the previous blocks:
\begin{equation}\label{eq:union of disjoint block}
\cO_{n,s}\=\bigcup_{(a,b)<(n,s)}A^{J_{a,b}}E_a.
\end{equation}
Since $\cO_{n,s}$ is finite, we define $M_{n,s}\=
\max\{r\in\N:\gamma_r\in\cO_{n,s}\}$,
with the convention that $M_{n,s}\=0$ if
$\cO_{n,s}=\emptyset$.

By Lemma~\ref{lem:frequency relocation}, at least one integer
$J\in\{0,\,1,\,\ldots,\,M_{n,s}\abs{E_n}\}$ satisfies
\begin{equation}\label{eq:fourier box recursion}
A^{J}E_n\cap
\{\gamma_1,\,\ldots,\,\gamma_{M_{n,s}}\}
=\emptyset ;
\end{equation}
we let $J_{n,s}$ be the least such $J$.
Thus $A^{J_{n,s}}E_n$ is placed strictly after all previously occupied frequency positions.

By the recursive choice of $J_{n,s}$, the family of \emph{Fourier blocks}
$A^{J_{n,s}}E_n$, indexed by the pairs $(n,s)$ with $n\in\N$ and
$1\le s\le K_n$, has the following two properties:
\begin{enumerate}[label=\rm{(B\arabic*)}]
\smallskip
\item\label{B1} the blocks are pairwise disjoint;
\smallskip
\item\label{B2} if $(a,b)<(n,s)$ in the lexicographic order, then every character
in $A^{J_{n,s}}E_n$ occurs strictly later than every character in $A^{J_{a,b}}E_a$
with respect to the enumeration \eqref{eq:enumeration}.
\end{enumerate}
\smallskip
The proofs of Propositions~\ref{p:abstract uniform fourier}
and~\ref{p:tilde F exposure} below use the integers $J_{n,s}$ only through
\ref{B1} and \ref{B2}. Accordingly, in
Subsection~\ref{ss:effective Fourier construction} the recursive choice above is
replaced, in each of three concrete settings, by an explicit prescription, and all
that has to be checked is that \ref{B1} and \ref{B2} still hold.

For $N\in\N$, define $\widetilde F_N\=\sum_{n=1}^{N}\widetilde\Delta_n$.
Since $\Normbig{\widetilde\Delta_N}_\infty\le\norm{\Delta_N}_\infty\le\alpha_N$,
it follows from \eqref{eq:alpha summable} that
$\bigl\{\widetilde F_N\bigr\}_{N\in\N}$ converges uniformly; we denote its limit
by $\widetilde F$.
Each $\widetilde F_N$ is a finite linear combination of
characters, hence continuous, so $\widetilde F\in C(G,\R)$.

\begin{prop}[Fourier series in a prescribed order]
\label{p:abstract uniform fourier}
There exists a sequence $\{c_r\}_{r\in\N}\subseteq\Q$, with $c_r=\wh{\widetilde F}(\gamma_r)$
for every $r\in\N$, such that
\begin{equation}\label{eq:abstract fourier series}
\widetilde F(x)=\sum_{r=1}^{+\infty}c_r\gamma_r(x),
\end{equation}
with the partial sums converging uniformly on $G$ in the enumeration order
\eqref{eq:enumeration}.
\end{prop}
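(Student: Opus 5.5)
The plan is to define $c_r$ directly from the block structure, and then control an arbitrary partial sum by a complete sum of blocks plus at most one partially truncated block. By \ref{B1}, each nontrivial character $\gamma_r$ lies in at most one block $A^{J_{n,s}}E_n$. So I set $c_r\=K_n^{-1}\wh{\Delta_n}(A^{-J_{n,s}}\gamma_r)$ when $\gamma_r\in A^{J_{n,s}}E_n$, which is well defined by injectivity of $A$, and $c_r\=0$ when $\gamma_r$ lies in no block. Lemma~\ref{lem:rational fourier closure} shows that $\wh{\widetilde\Delta_{n,s}}$ is supported on $A^{J_{n,s}}E_n$ with exactly these rational values, so $c_r\in\Q$. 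Uniform convergence of $\widetilde F_N$ gives convergence in $L^2(m_G)$, so Fourier coefficients pass to the limit. Hence $\wh{\widetilde F}(\gamma_r)=\lim_N\wh{\widetilde F_N}(\gamma_r)=c_r$, since for $N\ge n$ only the single block $(n,s)$ contributes at $\gamma_r$. Also $\wh{\widetilde F}(1)=0$.

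The main step is uniform convergence of $\Sigma_R\=\sum_{r\le R}c_r\gamma_r$. Fix $R$. By \ref{B2}, the nonempty blocks are ordered in the enumeration consistently with the lexicographic order. So the set $\{\gamma_1,\ldots,\gamma_R\}$ contains all blocks $(a,b)$ preceding some index $(n,s)$, contains part of block $(n,s)$, and meets no later block; here $(n,s)$ is the first block not entirely contained in it. Empty blocks are harmless. Thus
\begin{equation*}
\Sigma_R=\widetilde F_{n-1}+\sum_{b<s}\widetilde\Delta_{n,b}+P_R,
\end{equation*}
where $P_R$ is a truncation of $\widetilde\Delta_{n,s}$ to a subset of its frequencies. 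Therefore $\norm{P_R}_\infty\le\wnorm{\widetilde\Delta_{n,s}}=K_n^{-1}\wnorm{\Delta_n}\le 2^{-n}$ by \eqref{eq:number-of-frequency-copies} and Lemma~\ref{lem:rational fourier closure}.

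This gives
\begin{equation*}
\norm{\widetilde F-\Sigma_R}_\infty\le\norm{\widetilde F-\widetilde F_{n-1}}_\infty+\Normbig{\textstyle\sum_{b<s}\widetilde\Delta_{n,b}}_\infty+2^{-n}.
\end{equation*}
The first term is at most $\sum_{m\ge n}\alpha_m$. The middle term is the delicate point, because a sum of some copies is not controlled by $\norm{\Delta_n}_\infty$ alone. I bound it crudely as $\frac{s-1}{K_n}\norm{\Delta_n}_\infty\le\alpha_n$. The whole right-hand side is then at most $2\sum_{m\ge n}\alpha_m+2^{-n}$.

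It remains to check that $n=n(R)\to\infty$ as $R\to\infty$. Every block is finite, and for each fixed $n$ there are only finitely many blocks $(a,b)$ with $a\le n$. Once $R$ exceeds the largest index occurring in those blocks, the current block index is at least $n+1$. Combined with \eqref{eq:alpha summable}, this gives $\Sigma_R\to\widetilde F$ uniformly, which proves \eqref{eq:abstract fourier series}. I expect the main obstacle to be the bookkeeping that \ref{B2} really forces the partial-sum index set to cut at most one block. This is where the strict "occurs strictly later" ordering is used: without it, a cutoff could slice several blocks at once, and the Wiener-norm bound would no longer sum.
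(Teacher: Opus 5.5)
Your proof is correct and follows essentially the same route as the paper: you define $c_r$ blockwise via \ref{B1}, use \ref{B2} to see that a cutoff slices at most one block, bound the sliced block by $K_n^{-1}\wnorm{\Delta_n}<2^{-n}$, and bound the remaining copies and later increments by uniform norms. The differences are cosmetic. You compare $\Sigma_R$ with $\widetilde F_{n-1}$, which gives $2\sum_{m\ge n}\alpha_m+2^{-n}$ instead of the paper's $2^{-n}+\sum_{m\ge n}\alpha_m$. You also identify $c_r=\wh{\widetilde F}(\gamma_r)$ through convergence of $\widetilde F_N$ rather than by term-by-term integration of the limiting series.
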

\begin{proof}
Recall from \ref{B1} that for $n\in\N,\ 1\le s\le K_n$,
the Fourier blocks $A^{J_{n,s}}E_n$
are pairwise disjoint. If $\gamma_r=A^{J_{n,s}}\gamma$ for some $n\in \N$, $1\le s \le K_n$, and $\gamma\in E_n$, set
\begin{equation*}
c_r\=\frac1{K_n}\wh{\Delta_n}(\gamma);
\end{equation*}
otherwise set $c_r\=0$. The disjointness of the blocks makes this definition well-defined.

We now prove the uniform convergence of the Fourier partial sums.
For every subset $E\subseteq E_n$,
\begin{equation*}\NormBigg{\frac1{K_n}\sum_{\gamma\in E}
\wh{\Delta_n}(\gamma)
A^{J_{n,s}}\gamma}_\infty
\le
\frac{1}{K_n}\sum_{\gamma\in E}
\Absbig{\wh{\Delta_n}(\gamma)}
\le\frac{\wnorm{\Delta_n}}{K_n}
<2^{-n}.\end{equation*}
Hence, a cutoff inside a single Fourier block creates an error of at
most $2^{-n}$.

Let $R\in\N$. If the cutoff at $\gamma_R$ falls inside the block
corresponding to $\widetilde\Delta_{n,s_0}$, then the remaining part is
bounded as follows:
\begin{equation}\label{eq:abstract partial sum}
\NormBigg{
\widetilde F-\sum_{r=1}^{R}c_r\gamma_r}_\infty
\le2^{-n}+\sum_{s>s_0}
\Normbig{\widetilde\Delta_{n,s}}_\infty
+\sum_{j>n}\Normbig{\widetilde\Delta_j}_\infty 
\le2^{-n}+\alpha_n+\sum_{j>n}\alpha_j .
\end{equation}
If the cutoff lies between two consecutive blocks, the first term is absent.

Write $g(n)\=2^{-n}+\sum_{j\ge n}\alpha_j$, so that \eqref{eq:abstract partial sum} reads
$\bigl\Vert\widetilde F-\sum_{r\le R}c_r\gamma_r\bigr\Vert_\infty\le g(n)$ whenever the
cutoff at $\gamma_R$ lies in, or immediately after, a block with first index $n$. By
\eqref{eq:alpha summable}, $g$ is non-increasing and $g(n)\to0$ as $n\to+\infty$. For each
$n$ the set $\bigcup_{(a,b)\le(n,K_n)}A^{J_{a,b}}E_a$ is finite, so for all
sufficiently large $R$ the cutoff lies beyond every block with first index at most $n$;
hence $\bigl\Vert\widetilde F-\sum_{r\le R}c_r\gamma_r\bigr\Vert_\infty\le g(n)$ for all
such $R$, and $\sum_{r=1}^{R}c_r\gamma_r\to\widetilde F$ uniformly on $G$.

Since this convergence is uniform and the characters are orthonormal in
$L^2(m_G)$, term-by-term integration gives $c_r=\wh{\widetilde F}(\gamma_r)$ for every
$r\in\N$, and $\wh{\widetilde F}(1)=0$; thus \eqref{eq:abstract fourier series} is the
Fourier series of $\widetilde F$, listed in the prescribed order, as claimed.
\end{proof}

\begin{prop}[A Fourier series exposure]
\label{p:tilde F exposure}
The normalised Haar measure $m_G$ is the unique $T$-invariant maximizing measure for $\widetilde F$ defined in Proposition~\ref{p:abstract uniform fourier}.
\end{prop}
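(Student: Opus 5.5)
The key observation is that $\widetilde F$ and $F$ have the same integral against every $T$-invariant measure. Once this is in hand, the exposure property transfers directly from Proposition~\ref{p:exposure bound}.

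Fix $\nu\in\cM(G,T)$. By \eqref{eq:spread-increment} and \eqref{eq:frequency-copy-invariance},
\begin{equation*}
\int\!\widetilde\Delta_n\,\mathrm{d}\nu=\frac1{K_n}\sum_{s=1}^{K_n}\int\!\Delta_n\circ T^{J_{n,s}}\,\mathrm{d}\nu=\int\!\Delta_n\,\mathrm{d}\nu
\end{equation*}
for every $n\in\N$. Summing over $1\le n\le N$ and using $F_0=0$ telescopes to
\begin{equation*}
\int\!\widetilde F_N\,\mathrm{d}\nu=\int\!F_N\,\mathrm{d}\nu .
\end{equation*}
Both sides converge as $N\to+\infty$. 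On the left, $\widetilde F_N\to\widetilde F$ uniformly, by the bound $\Normbig{\widetilde\Delta_n}_\infty\le\alpha_n$ and \eqref{eq:alpha summable}. On the right, $F_N\to F$ uniformly by Proposition~\ref{p:increment estimate}. Since $\nu$ is a probability measure, uniform convergence lets us pass to the limit, giving $\int\!\widetilde F\,\mathrm{d}\nu=\int\!F\,\mathrm{d}\nu$ for every $\nu\in\cM(G,T)$.

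In particular this holds for $\nu=m_G$, which is $T$-invariant since $T$ is surjective. Proposition~\ref{p:exposure bound} therefore gives $\int\!\widetilde F\,\mathrm{d}m_G=0$ and $\int\!\widetilde F\,\mathrm{d}\nu<0$ for every $\nu\in\cM(G,T)\smallsetminus\{m_G\}$. Hence $Q(T,\widetilde F)=0$, and this value is attained only at $m_G$; that is, $\cM_{\max}(\widetilde F)=\{m_G\}$.

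The argument has no real obstacle. The only points to check are, first, the invariance identity \eqref{eq:frequency-copy-invariance}, which is immediate from $T$-invariance of $\nu$ applied $J$ times. Second, the interchange of limit and integral, which uniform convergence justifies. The Fourier-ordering properties \ref{B1}--\ref{B2} and Proposition~\ref{p:abstract uniform fourier} play no role here; they are needed only for the series representation, not for exposure.
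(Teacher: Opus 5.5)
Your proof is correct and follows the paper's argument. The paper likewise uses \eqref{eq:frequency-copy-invariance} to obtain $\int\!\widetilde F_N\,\mathrm{d}\nu=\int\!F_N\,\mathrm{d}\nu$, passes to the limit by uniform convergence, and concludes from Theorem~\ref{thm:main}, which you unpack directly via Proposition~\ref{p:exposure bound}; your observation that \ref{B1}--\ref{B2} play no role in this step is also accurate.
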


\begin{proof}
By \eqref{eq:spread-increment}, \eqref{eq:frequency-copy-invariance}, and
$\widetilde F_N=\sum_{n=1}^N\widetilde\Delta_n$, we have
$\int\!\widetilde F_N\,\mathrm{d}\nu=\int\!F_N\,\mathrm{d}\nu$ for all
$\nu\in\cM(G,T)$. Letting $N\to+\infty$ and using the uniform convergence of
$\{F_N\}$ and $\bigl\{\widetilde F_N\bigr\}$ together with $\nu(G)=1$, we obtain
$\int\!\widetilde F\,\mathrm{d}\nu=\int\!F\,\mathrm{d}\nu$. The conclusion now
follows from Theorem~\ref{thm:main}.
\end{proof}

\begin{prop}[Exact computability of the coefficients]
\label{p:exact coefficients}
Suppose that the enumeration \eqref{eq:enumeration} is chosen so that, given $r\in\N$, the
character $\gamma_r$ and the integers $J_{n,s}$ can be produced by finite procedures, and
that equality in $\Gamma$ is decidable. Then there is an algorithm which, on input
$r\in\N$, returns the exact value of $c_r\in\Q$ after finitely many operations.
\end{prop}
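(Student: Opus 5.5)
The plan is to make explicit the finite search implicit in the definition of $c_r$ given in the proof of Proposition~\ref{p:abstract uniform fourier}. Recall that $c_r=K_n^{-1}\wh{\Delta_n}(\gamma)$ if $\gamma_r=A^{J_{n,s}}\gamma$ for some $n$, $1\le s\le K_n$ and $\gamma\in E_n$, and $c_r=0$ otherwise. The algorithm therefore has three jobs: compute each $\Delta_n$ exactly, decide membership of $\gamma_r$ in a block, and bound how many blocks must be examined.

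The first step is to observe that, for each $n$, the full finite Fourier expansion of $F_n$, and hence of $\Delta_n$, is produced exactly by a finite procedure. This follows from Remark~\ref{rem: computable} and Lemma~\ref{lem:rational fourier closure}. The recursion in Subsection~\ref{ss:inductive construction} has closed-form parameters $a_n,\delta_n,b_n$. The quantities $M_n,d_n,\oB_n,B_n,\Theta_n$ are rationals obtained by ceilings of rational expressions. $\cD_n$ is a finite rational combination of characters $A^j\gamma_r$, and the flattening step uses only composition with $T^j$, products, a polynomial in $\Q[t]$, and reading off the trivial coefficient. Hence $E_n$, $\wnorm{\Delta_n}$ and $K_n$ from \eqref{eq:number-of-frequency-copies} are computable exactly. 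By hypothesis the integers $J_{n,s}$ are also computable, so each block $A^{J_{n,s}}E_n$ is a finite list of characters that we can write down, applying $A$ via the given presentation.

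The second step is the search itself. We enumerate the pairs $(n,s)$ in lexicographic order, compute each block, and test whether $\gamma_r$ equals $A^{J_{n,s}}\gamma$ for some $\gamma\in E_n$, which is decidable by the assumption on equality in $\Gamma$. At the first hit we output $K_n^{-1}\wh{\Delta_n}(\gamma)\in\Q$. By \ref{B1} the hit, if it exists, is unique.

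The third step, which I expect to be the main obstacle, is a stopping rule guaranteeing termination when $c_r=0$. For this I would use \ref{B2}. Every character in block $(n,s)$ occurs strictly after every character in all earlier blocks. So if the maximal enumeration index occurring in the blocks examined so far is at least $r$ without a hit, then every later block consists only of characters $\gamma_{r'}$ with $r'>r$, and we may stop and output $0$.

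It remains to show this threshold is reached in finitely many steps. Each nonempty block contributes at least one new index, and these indices strictly increase by \ref{B2}. So once $r$ nonempty blocks have been examined, the maximal index is at least $r$. Nonempty blocks occur infinitely often, since $E_n\neq\emptyset$ for infinitely many $n$. Otherwise $F=F_N$ would be a trigonometric polynomial and $F_N$ would expose $m_G$, contradicting $Q(T,F_N)$ being attained only at $m_G$ together with the coboundary flattening; more simply, if $E_n=\emptyset$ then $\Delta_n=0$. A cleaner route avoids this issue entirely: we simultaneously check that, for the $n$ reached, all characters $\gamma_1,\ldots,\gamma_r$ have been accounted for. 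Alternatively, we note that $\gamma_r$ determines $r$ and that the search can be capped at $n$ with $\sum_{(a,b)\le(n,K_n)}\abs{E_a}\ge r$ whenever such $n$ exists, and otherwise all blocks are finitely many and the search ends after exhausting them once $\Delta_n=0$ is detected. I would settle this termination point carefully, and this is where the main work lies.
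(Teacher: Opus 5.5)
Your Steps 1 and 2 follow the paper's route. So does your stopping rule: you halt once the largest enumeration index seen is at least $r$ without a hit, while the paper generates blocks in lexicographic order and halts at the first block lying entirely beyond $\gamma_r$, citing \ref{B2}. The stopping rule itself is sound.

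\textbf{The gap.} Termination is unproved. You correctly see that it needs nonempty blocks to occur infinitely often, but none of your justifications establishes this.
\begin{itemize}
\item The claim that a trigonometric polynomial $F_N$ exposing $m_G$ would yield a contradiction is not backed by anything available. The regularity obstruction recalled in the introduction concerns hyperbolic systems only, via sub-actions, and nothing here rules it out for a general $G$.
\item ``If $E_n=\emptyset$ then $\Delta_n=0$'' merely restates the definition of $E_n$.
\item The fallback of ending the search ``once $\Delta_n=0$ is detected'' fails. One vanishing increment says nothing about later increments, so no finite computation certifies that all remaining blocks are empty.
\end{itemize}
As written, your algorithm could run forever.

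\textbf{The missing idea.} Evaluate at the identity $e_G$; this gives $\Delta_n\neq0$ for \emph{every} $n$.
\begin{itemize}
\item Every character equals $1$ at $e_G$. Hence $B_{r,N}(e_G)=1$ and $K_{r,N}(e_G)=N^{-1}-1$, so $\cD_n(e_G)=\bigl(N(\rho_n)^{-1}-1\bigr)\bigl(1-2^{-R(\rho_n)}\bigr)\le-\tfrac38$.
\item We have $\norm{M_n^{-1}S_{M_n}H_n}_\infty\le\wnorm{H_n}<\oB_n$. So Lemma~\ref{lem:bernstein approximation} gives $\cB_{d_n,\oB_n}(M_n^{-1}S_{M_n}H_n)\ge0$, and hence $Q_n\le C_n<b_n$ pointwise.
\item Since $\Delta_n=a_n\cD_n+Q_n$ and $b_n=\delta_{n+1}/64<a_n$,
\[
\Delta_n(e_G)<-\tfrac38\,a_n+b_n<0 .
\]
\end{itemize}
Thus $E_n\neq\emptyset$ and every block $A^{J_{n,s}}E_n$ is nonempty. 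By \ref{B2}, every character in the $k$-th block (in lexicographic order) has index at least $k$. So your stopping rule fires within the first $r$ blocks, and the search terminates.

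The paper's own proof relies on this same nonemptiness when it asserts that the halting condition is met after finitely many blocks.
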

\begin{proof}
    Every quantity in the recursion of Subsection~\ref{ss:inductive construction} is a rational
number or a positive integer obtained from earlier data by finitely many rational
arithmetic operations, ceilings of rationals,
and finite operations in $\Gamma$:
this is the content of \eqref{eq:defi of a}, \eqref{eq:b}, \eqref{eq:defi of R N},
\eqref{eq:Mn}, \eqref{eq:Bernstein degree} and \eqref{eq:B Theta n}, together with
Lemmas~\ref{lem:rational fourier closure} and~\ref{lem:bernstein approximation}. In
particular, for each $n$ the complete finite Fourier expansion of $F_n$, hence of
$\Delta_n=F_n-F_{n-1}$, is produced exactly, and so are $\norm{\Delta_n}_{A}\in\Q$ and
$ K_n=1+\lceil2^n\norm{\Delta_n}_{A}\rceil$. 
By \ref{B2} the blocks
$A^{J_{n,s}}E_n$ occur in increasing order of enumeration index, so on input $r$ one
generates the blocks in lexicographic order of $(n,s)$ and halts as soon as a block has
been produced all of whose characters occur after $\gamma_r$.
This happens after finitely
many blocks, and $c_r= K_n^{-1}\wh{\Delta_n}(\gamma)$ if $\gamma_r=A^{J_{n,s}}\gamma$
for some $\gamma\in E_n$ in one of the blocks generated, and $c_r=0$ otherwise.
\end{proof}

\subsection{Effective Fourier series construction}
\label{ss:effective Fourier construction}

The choice of $J_{n,s}$ in \eqref{eq:fourier box recursion} was made by an existence
argument. Whether the relocation times can instead be prescribed in closed form depends on
how the dual endomorphism $A$ interacts with the chosen enumeration of $\Gamma$. In each of
the three settings below we follow the same three steps: we introduce a gauge on $\cF_\Q$
(the degree, or the depth) which is controlled under the four operations of
Lemma~\ref{lem:rational fourier closure}; we use it to bound the frequency support of
$\Delta_n$ and hence of its relocated copies; and we then choose the $J_{n,s}$ so that
consecutive blocks are strictly separated, which gives \ref{B1} and \ref{B2} and lets
Propositions~\ref{p:abstract uniform fourier} and~\ref{p:tilde F exposure} apply.

\subsubsection{Expanding maps on the circle: proof of Theorem~\ref{thm:circle}}

Here we will prove Theorem~\ref{thm:circle}.
We choose the enumeration in \eqref{eq:enumeration} by setting $\gamma_{2k-1}(x)\=\exp(2\pi\I kx)$ and $\gamma_{2k}(x)\=\exp(-2\pi\I kx)$ for $k\in\N$. 
On $\T$, inversion-invariance of the Fourier coefficients is the classical condition for a function to be even, so every $P\in\cF_{\Q}$ can be written as
\begin{equation*}P(x)=a_0+\sum_{k=1}^{N}a_k\cos(2\pi kx),\end{equation*} where $a_0,\ldots,a_N\in\Q$ and $N\in\N_0$. 

Defining $\deg (P)\=
\max(\{k\in\N:a_k\neq0\}\cup\{0\})$,
we require the following lemma.

\begin{lemma}
\label{lem:circle degree estimate}
For $P$, $Q\in \cF_{\Q}$, $R\in\Q[t]$, $j\in \N_0$ and the expanding map $T(x)=d\, x \pmod{1}$ on $\T$, 
with $d\in\Z$, $\abs{d}\ge 2$,
the following hold:
\begin{enumerate}[label=\rm{(\roman*)}]
\smallskip
\item $\deg(P+Q)\le \max\{\deg(P),\,\deg(Q)\}$;
\smallskip
\item $\deg(P\cdot Q)\le \deg(P)+\deg(Q)$;
\smallskip
\item $\deg\bigl(P\circ T^j \bigr)=\abs{d}^j\deg(P)$;
\smallskip
\item $\deg(R\circ P)\le \deg(R)\cdot \deg(P)$.
\end{enumerate}
\end{lemma}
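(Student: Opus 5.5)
The plan is to pass from the cosine representation to the exponential one and read off each claim from how frequencies combine. Every $P\in\cF_\Q$ can be written as $P=\sum_{\abs{k}\le\deg(P)}\wh P(k)e_k$ with $e_k(x)\=\exp(2\pi\I kx)$, and $\deg(P)$ is the largest $\abs{k}$ with $\wh P(k)\neq0$, or $0$ if there is none. Here $a_k=2\wh P(k)$ for $k\ge1$, so $\deg$ is exactly the maximal absolute frequency in $\supp\wh P$. All four parts then reduce to statements about $\supp\wh{\,\cdot\,}$.

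For (i), note that $\supp\wh{(P+Q)}\subseteq\supp\wh P\cup\supp\wh Q$, and take maxima of $\abs{k}$. For (ii), expand the product: $PQ=\sum_{k,\ell}\wh P(k)\wh Q(\ell)e_{k+\ell}$. Every frequency that appears has the form $k+\ell$ with $\abs{k+\ell}\le\abs k+\abs\ell\le\deg(P)+\deg(Q)$. Cancellation can only lower the degree, so we get an inequality.

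For (iii), use the support identity in Lemma~\ref{lem:rational fourier closure}. Here $A e_k=e_k\circ T=e_{dk}$, so $\supp\wh{(P\circ T^j)}=d^j\cdot\supp\wh P$, and the coefficients are carried over bijectively since $A$ is injective. The maximal absolute frequency therefore becomes $\abs d^j\deg(P)$ exactly, and the degree-$0$ case is consistent with this. This is the only part with equality, and the bijectivity of $\gamma\mapsto A^j\gamma$ is exactly what rules out cancellation.

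For (iv), write $R(t)=\sum_{i=0}^{\deg R}r_it^i$ and apply (i) together with (ii) iterated, which gives $\deg(P^i)\le i\deg(P)$. Then $\deg(R\circ P)\le\max_{i\le\deg R}i\deg(P)=\deg(R)\deg(P)$. The constant term has degree $0$, and if $R$ is constant the bound reads $0$, which is fine. We use the convention $\deg(P^0)=0$.

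No step is a genuine obstacle. The only points needing care are the bookkeeping between $a_k$ and $\wh P(\pm k)$, the convention $\deg=0$ for constants, and the fact that (iii) uses $\abs d$ because negative $d$ simply swaps $\pm k$, which preserves evenness.
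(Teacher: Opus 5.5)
Your proof is correct and is essentially the paper's argument. The paper likewise reads off (i) from linearity, gets (ii) from the fact that products only create frequencies of modulus at most $\deg(P)+\deg(Q)$, obtains equality in (iii) from the injectivity of frequency scaling (so the top frequency $\deg(P)$ goes to $\abs{d}^j\deg(P)$ without cancellation), and derives (iv) from (i) and (ii) applied to $R\circ P=\sum_i c_iP^i$. The only difference is cosmetic: you work in the exponential basis with $e_ke_\ell=e_{k+\ell}$, whereas the paper stays with cosines and uses $\cos(2\pi kx)\cos(2\pi lx)=\tfrac12\bigl[\cos(2\pi(k+l)x)+\cos(2\pi\abs{k-l}x)\bigr]$.
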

\begin{proof}
Part (i) is immediate from linearity of Fourier coefficients. For (ii), note that the
identity $\cos(2\pi kx)\cos(2\pi lx)=\tfrac12\bigl[\cos(2\pi(k+l)x)+\cos(2\pi\abs{k-l}x)\bigr]$
shows that $PQ$ has no frequency exceeding $\deg P+\deg Q$. For (iii),
composing $\cos(2\pi kx)$ with $T$ gives $\cos(2\pi dkx)=\cos(2\pi \abs{d} kx)$, so composition with
$T$ scales every frequency by exactly $\abs{d}$; since $k\mapsto \abs{d} k$ is injective on
$\N_0$, the top frequency of $P$ maps to $\abs{d}\deg(P)$ without cancellation, giving
equality. Finally, (iv) follows from (i) and (ii) applied to
$R\circ P=\sum_ic_iP^i$, each term of degree at most $i\deg(P)\le\deg(R)\deg(P)$.
\end{proof}

Now we are able to prove Theorem~\ref{thm:circle}:

\begin{proof}[Proof of Theorem~\ref{thm:circle}]
We prescribe a bound for $\deg(\Delta_n)$. Set $D_0\=1$. 
By \eqref{eq:fourier detector}, the frequencies of $K_{r,N}$ are $(d^j-d^\ell)k_r$ with
$0\le\ell<j<N$, where $\gamma_r$ has frequency $\pm k_r$. The enumeration above gives
$\abs{k_r}\le r$, and $\abs{d^j-d^\ell}\le\abs{d}^j+\abs{d}^\ell\le2\abs{d}^{N-1}\le\abs{d}^N$
since $\abs{d}\ge2$; hence
$\deg(K_{r,N})\le \abs{d}^{N}r$,
and so $\deg(\cD_n)\le(n+1)\abs{d}^{2^{n+2}}$.
It follows from \eqref{eq:from Fn-1 to Hn} that
\begin{equation*}
\deg(H_n)\le
\max\bigl\{D_{n-1},\,(n+1)\abs{d}^{2^{n+2}}\bigr\}.
\end{equation*}
Using \eqref{eq:bernstein approximation},
\eqref{eq:Bernstein degree},
\eqref{eq:recursion construction},
and Lemma~\ref{lem:circle degree estimate}~(iv),
define
\begin{equation}\label{eq:defi of D_n}
D_n\= d_n \abs{d}^{M_n}
\max\bigl\{D_{n-1},\,(n+1)\abs{d}^{2^{n+2}}\bigr\}.
\end{equation}
Then $\deg(F_n)\le D_n$ and $D_n\ge D_{n-1}$, so that
$\deg(\Delta_n)\le \max\{\deg (F_{n}),\,\deg(F_{n-1})\} \le D_n$. We therefore set $e_n\=D_n$ and thus $e_n\ge \deg(\Delta_n)$.

Since $\Delta_n$ has zero Haar mean, its positive Fourier support is
contained in $\{1,\,\ldots,\,e_n\}$. Hence the positive Fourier support of $\Delta_n\circ T^J$ is contained in $[\abs{d}^J,\abs{d}^Je_n]$.
We may therefore choose
\begin{equation*}
J_{1,1}\=0,\qquad
J_{n,s+1}\=J_{n,s}+1+\lceil \log_{\abs{d}}(e_n)\rceil,\qquad
J_{n+1,1}\=J_{n,K_n}+1+\lceil \log_{\abs{d}}(e_n)\rceil.
\end{equation*}
The integers $J_{n,s}$
are given by the closed formula above, so Proposition~\ref{p:exact coefficients} applies.
With the above choice of the $J_{n,s}$,
we have
$\abs{d}^{J_{n,s+1}}>e_n \abs{d}^{J_{n,s}}$: the block $A^{J_{n,s+1}}E_n$ (respectively
$A^{J_{n+1,1}}E_{n+1}$, when $s= K_n$) has frequency support in
$[\abs{d}^{J_{n,s+1}},\,+\infty)$, strictly beyond the frequency support
$[\,0,\,e_n \abs{d}^{J_{n,s}}\,]$ of every block already placed. Since the enumeration
\eqref{eq:enumeration} orders the pairs $\{k,\,-k\}$ by increasing $k$, it follows that the
blocks $A^{J_{n,s}}E_n$, $n\in\N$, $1\le s\le K_n$, are pairwise disjoint, and that
consecutive blocks in the lexicographic order of $(n,s)$ occur strictly successively with
respect to \eqref{eq:enumeration}. 
Thus \ref{B1} and \ref{B2} hold, so Proposition~\ref{p:abstract uniform fourier} applies:
there is a sequence $\{c_r\}_{r\in\N}\subseteq\Q$ with $\widetilde F=\sum_{r\ge1}c_r\gamma_r$,
the partial sums converging uniformly in the enumeration order.

Each $\widetilde\Delta_n$ lies in $\cF_\Q$ by Lemma~\ref{lem:rational fourier closure},
so $\wh{\widetilde F}(-k)=\wh{\widetilde F}(k)\in\Q$ for every $k\in\N$. Writing
$c_k\=2\wh{\widetilde F}(k)\in\Q$, the partial sum $\sum_{r\le 2K}c_r\gamma_r$ of
Proposition~\ref{p:abstract uniform fourier} equals $\sum_{k=1}^{K}c_k\cos(2\pi kx)$; being
a subsequence of a uniformly convergent sequence, these cosine partial sums converge
uniformly to $\widetilde F$. Together with
Proposition~\ref{p:tilde F exposure}, this proves Theorem~\ref{thm:circle}.
\end{proof}

\subsubsection{The one-sided full shift: proof of Theorem~\ref{thm:shift}}

Here we prove Theorem~\ref{thm:shift}.
We choose the enumeration in \eqref{eq:enumeration} by setting $\gamma_r\=w_r$ for
$r\in\N$, which is the Walsh ordering of \eqref{eq:walsh ordering}. Since each $w_r$ is
real-valued with $w_r^{-1}=w_r$, every $P\in\cF_\Q$ has the form
$P=\sum_{k=0}^{N}a_kw_k$ with $a_0,\ldots,a_N\in\Q$.
It will be convenient to define
\begin{equation*}
\operatorname{depth}(P)\=
\min\{p\in\N_0 :
P(x)=P(y)\text{ whenever }x_j=y_j\text{ for }0\le j<p\}.
\end{equation*}

\begin{lemma}
\label{lem:shift depth estimate}
For $P$, $Q\in\cF_{\Q}$, $R\in\Q[t]$, and $j\in\N_0$, the
following hold:
\begin{enumerate}[label=\rm{(\roman*)}]
\smallskip
\item $\operatorname{depth}(P+Q)\le
\max\{\operatorname{depth}(P),\,\operatorname{depth}(Q)\}$;
\smallskip
\item $\operatorname{depth}(P\cdot Q)\le
\max\{\operatorname{depth}(P),\,\operatorname{depth}(Q)\}$;
\smallskip
\item $\operatorname{depth}\bigl(P\circ T^j \bigr)\le
\operatorname{depth}(P)+j$;
\smallskip
\item $\operatorname{depth}(R\circ P)\le
\operatorname{depth}(P)$.
\end{enumerate}
\end{lemma}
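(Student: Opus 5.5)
The plan is to argue directly from the definition of $\operatorname{depth}$ as a cylinder-measurability index. Write $p\=\operatorname{depth}(P)$ and $q\=\operatorname{depth}(Q)$, and note that $\operatorname{depth}(P)\le p'$ holds as soon as $P(x)=P(y)$ whenever $x$ and $y$ agree on coordinates $0\le j<p'$. The minimum in the definition is attained because every $P\in\cF_\Q$ is a finite combination of Walsh functions $w_k$. Each $w_k$ depends only on the coordinates $j$ with $\varepsilon_j(k)=1$, so the set of admissible $p$ is nonempty. It is also upward closed, since agreement on a longer prefix implies agreement on a shorter one. So in each part it suffices to exhibit one admissible $p'$.

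For (i) and (ii), take $p'\=\max\{p,q\}$. If $x_j=y_j$ for all $0\le j<p'$, then in particular $x$ and $y$ agree on the first $p$ and on the first $q$ coordinates. Hence $P(x)=P(y)$ and $Q(x)=Q(y)$, which gives $(P+Q)(x)=(P+Q)(y)$ and $(PQ)(x)=(PQ)(y)$. For (iv), $R\circ P(x)=R(P(x))=R(P(y))$ whenever $P(x)=P(y)$, so $p$ itself is admissible. Here I would note that membership in $\cF_\Q$ of the sums, products and compositions is already given by Lemma~\ref{lem:rational fourier closure}, so only the depth bound needs proving.

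For (iii), I use the explicit form of the one-sided shift, $(T^jx)_i=x_{i+j}$. If $x_i=y_i$ for $0\le i<p+j$, then $(T^jx)_i=(T^jy)_i$ for $0\le i<p$. Therefore $P(T^jx)=P(T^jy)$, so $p+j$ is admissible for $P\circ T^j$.

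There is no genuine obstacle. The only point needing care is (iii): one has to check that the shift acts by $(Tx)_i=x_{i+1}$, which is the convention for $T\colon X\to X$ in Theorem~\ref{thm:shift}. Then composing with $T^j$ shifts the relevant window of coordinates by exactly $j$.
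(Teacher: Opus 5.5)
Your proposal is correct and follows essentially the same route as the paper, which also argues directly from the definition of $\operatorname{depth}$: $\max\{p,q\}$ for (i)--(ii), the window shifted by $j$ under $(T^jx)_i=x_{i+j}$ for (iii), and the observation that $R\circ P$ factors through $P$ for (iv). Your remarks that the set of admissible $p$ is nonempty and upward closed make explicit what the paper leaves implicit.
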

\begin{proof}
If $P$ depends only on $x_0,\ldots,x_{p-1}$ and $Q$ only on
$x_0,\ldots,x_{q-1}$, then both $P+Q$ and $P\cdot Q$ depend only on
$x_0,\ldots,x_{\max\{p,\,q\}-1}$, giving (i) and (ii). If $P$ depends only on
$x_0,\ldots,x_{p-1}$, then $P\circ T^j(x)=P(T^jx)$ depends on
$x_j,\ldots,x_{j+p-1}$, hence only on the first $j+p$ coordinates, giving
(iii). Part (iv) holds since any function of $P(x)$ depends on $x$ only
through the coordinates that $P$ itself depends on.
\end{proof}

\begin{proof}[Proof of Theorem~\ref{thm:shift}]
We prescribe a bound for $\operatorname{depth}(\Delta_n)$. Set
$D_0\=0$. Since
$\operatorname{depth}(w_r)=1+\lfloor\log_2r\rfloor$ for $r\in\N$,
it follows from \eqref{eq:fourier detector} that
$\operatorname{depth}(K_{r,N})\le N+\lfloor\log_2r\rfloor$, and hence
\begin{equation*}
\operatorname{depth}(\cD_n)
\le
2^{n+2}+\lfloor\log_2(n+1)\rfloor.
\end{equation*}
Using Lemma~\ref{lem:shift depth estimate} and
\eqref{eq:recursion construction}, define
\begin{equation}\label{eq:defi of shift D_n}
D_n\= \max\bigl\{D_{n-1},\,
2^{n+2}+\lfloor\log_2(n+1)\rfloor\bigr\}+ M_n -1.
\end{equation}
Then $\operatorname{depth}(F_n)\le D_n$ and $D_n\ge D_{n-1}$, so that
\begin{equation*}
\operatorname{depth}(\Delta_n)\le
\max\{\operatorname{depth}(F_n),\,\operatorname{depth}(F_{n-1})\}\le D_n.
\end{equation*}
We therefore set $e_n\=D_n$ and thus $e_n\ge \operatorname{depth}(\Delta_n)$.

Since $\Delta_n$ has zero Haar mean, its Fourier support is contained
in $\{w_k:1\le k\le 2^{e_n}-1\}$. Since
$w_k\circ T^J=w_{2^Jk}$, the Fourier support of
$\Delta_n\circ T^J$ is contained in
$\bigl\{w_k:2^J\le k\le(2^{e_n}-1)2^J \bigr\}$. We may therefore choose
\begin{equation*}
J_{1,1}\=0,\qquad
J_{n,s+1}\=J_{n,s}+e_n,\qquad
J_{n+1,1}\=J_{n,K_n}+e_n.
\end{equation*}
The integers $J_{n,s}$ are given by the closed formula above, so
Proposition~\ref{p:exact coefficients} applies.
With this choice, $2^{J_{n,s+1}}>(2^{e_n}-1)2^{J_{n,s}}$: the block $A^{J_{n,s+1}}E_n$
(respectively $A^{J_{n+1,1}}E_{n+1}$, when $s= K_n$) has Walsh-frequency support in
$[2^{J_{n,s+1}},\,+\infty)$, strictly beyond the Walsh-frequency support
$[\,1,\,(2^{e_n}-1)2^{J_{n,s}}\,]$ of every block already placed. Since the Walsh ordering
$\gamma_r=w_r$ lists frequencies by increasing $k$, it follows that the blocks
$A^{J_{n,s}}E_n$, $n\in\N$, $1\le s\le K_n$, are pairwise disjoint, and that consecutive
blocks in the lexicographic order of $(n,s)$ occur strictly successively with respect to
\eqref{eq:enumeration}. Thus \ref{B1} and \ref{B2} hold, and Theorem~\ref{thm:shift} follows
from Propositions~\ref{p:abstract uniform fourier} and~\ref{p:tilde F exposure}.
\end{proof}

\subsubsection{Ergodic toral automorphisms: proof of Theorem~\ref{thm:torus}}

Here we will prove Theorem~\ref{thm:torus}.

\begin{rem}
In the two settings above, the dual map multiplies each frequency by a fixed integer, so the
absolute value of a frequency increases strictly along every dual orbit and the relocation
time $J_{n,s}$ can be read off by a closed formula. For a toral automorphism $T_B$ this fails: if $B^{\mathsf T}$
has a contracting direction then $\norm{(B^{\mathsf T})^{J}k}_\infty$ need not increase with
$J$. We therefore specify $J_{n,s}$ instead as the least admissible integer in an explicitly
bounded search range, admissibility being decidable by finitely many integer matrix--vector
products.
\end{rem}

In the setting of Theorem~\ref{thm:torus}, the dual group of $\T^m$ is identified with $\Z^m$, where
$\gamma_k(x)\=\exp(2\pi\I\langle k,x\rangle)$ and
$\gamma_k\circ T_B=\gamma_{B^{\mathsf T}k}$.
For this application, we choose the enumeration in
\eqref{eq:enumeration} by ordering $\Z^m\smallsetminus\{0\}$ first by increasing $\norm{k}_\infty$ and then lexicographically,
and write $k_r\in\Z^m\smallsetminus\{0\}$
for the frequency with $\gamma_r=\gamma_{k_r}$. Since $(2r+1)^m-1\ge2r\ge r$
points of $\Z^m\smallsetminus\{0\}$ lie in the ball $\{k:\norm{k}_\infty\le r\}$, this
enumeration gives $\norm{k_r}_\infty\le r$ for every $r\in\N$.

For $P\in\cF_{\Q}$, define
$\deg(P)\=\max(\bigl\{\norm{k}_\infty:\wh P(k)\neq0\bigr\}\cup\{0\})$.
Writing $B=(b_{ij})_{1\le i,j\le m}$, set $C_B\=m\max\{\abs{b_{ij}}: 1\le i,j\le m\}$.
Since
every coordinate of $B^{\mathsf T}v$ is a sum of $m$ terms each of modulus at most
$\max_{i,j}\abs{b_{ij}}\norm v_\infty$,
\begin{equation}\label{eq:CB}
\Normbig{(B^{\mathsf T})^{j}v}_\infty\le C_B^{\,j}\norm{v}_\infty
\qquad\text{for all }v\in\R^m,\ j\in\N_0.
\end{equation}

\begin{lemma}
\label{lem:T^m degree estimate}
For $P$, $Q\in\cF_{\Q}$, $R\in\Q[t]$, and $j\in\N_0$,
the following hold:
\begin{enumerate}[label=\rm{(\roman*)}]
\smallskip
\item $\deg(P+Q)\le\max\{\deg(P),\,\deg(Q)\}$;
\smallskip
\item $\deg(PQ)\le\deg(P)+\deg(Q)$;
\smallskip
\item $\deg\bigl(P\circ T_B^j \bigr)\le C_B^j\deg(P)$;
\smallskip
\item $\deg(R\circ P)\le\deg(R)\deg(P)$.
\end{enumerate}
\end{lemma}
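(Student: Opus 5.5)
The plan mirrors the proof of Lemma~\ref{lem:circle degree estimate}, working directly with Fourier supports in $\Z^m$. Every $P\in\cF_\Q$ is a finite sum $\sum_k \wh P(k)\gamma_k$, and $\deg(P)$ is the largest $\norm{k}_\infty$ over its support.

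\emph{Parts (i) and (ii).} The Fourier support of $P+Q$ lies in $\supp\wh P\cup\supp\wh Q$, which gives (i). For (ii), I would use the product rule $\gamma_k\gamma_l=\gamma_{k+l}$. Thus $\supp\wh{PQ}\subseteq\supp\wh P+\supp\wh Q$, and the triangle inequality gives $\norm{k+l}_\infty\le\norm k_\infty+\norm l_\infty\le\deg(P)+\deg(Q)$. Cancellation can only shrink the support, so no separate care is needed. The convention that the trivial frequency $0$ contributes degree $0$ is consistent with these bounds.

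\emph{Part (iii).} Since $\gamma_k\circ T_B=\gamma_{B^{\mathsf T}k}$, iterating gives $\gamma_k\circ T_B^j=\gamma_{(B^{\mathsf T})^jk}$. Lemma~\ref{lem:rational fourier closure} then shows that $\supp\wh{(P\circ T_B^j)}=(B^{\mathsf T})^j(\supp\wh P)$. By \eqref{eq:CB}, each frequency there has sup-norm at most $C_B^j\norm k_\infty\le C_B^j\deg(P)$. Unlike the circle case, we obtain only an inequality, not an equality, but that is all the statement asserts.

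\emph{Part (iv).} Write $R(t)=\sum_{i=0}^{\deg R}c_it^i$ with $c_i\in\Q$. By (ii) and induction, $\deg(P^i)\le i\deg(P)$, and $P^0=1$ has degree $0$. Applying (i) to the finite sum then gives $\deg(R\circ P)\le\max_i i\deg(P)\le\deg(R)\deg(P)$. If $R$ is constant, $R\circ P$ is constant and has degree $0$, so the bound holds trivially.

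There is no real obstacle here. The only point needing attention is (iii): one must record that composition with $T_B^j$ acts on frequencies by the linear map $(B^{\mathsf T})^j$, and then invoke the operator bound \eqref{eq:CB} rather than any monotonicity of norms along orbits.
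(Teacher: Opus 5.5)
Your proposal is correct and follows essentially the same argument as the paper: support inclusion for (i), the rule $\gamma_k\gamma_l=\gamma_{k+l}$ with the triangle inequality for (ii), the frequency action $k\mapsto(B^{\mathsf T})^jk$ combined with \eqref{eq:CB} for (iii), and the expansion $R\circ P=\sum_i c_iP^i$ bounded via (i) and (ii) for (iv). The only cosmetic difference is that you apply \eqref{eq:CB} directly at the $j$-th power rather than inducting on $j$.
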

\begin{proof}
Part (i) is immediate from linearity of Fourier coefficients. For (ii),
$\gamma_k\gamma_l=\gamma_{k+l}$ and $\norm{k+l}_\infty\le\norm{k}_\infty+\norm{l}_\infty$,
so the Fourier support of $PQ$ lies in $\{\ell:\norm\ell_\infty\le\deg P+\deg Q\}$.
For (iii), $\gamma_k\circ T_B^j=\gamma_{(B^{\mathsf T})^jk}$, and
$\norm{B^{\mathsf T}v}_\infty\le C_B\norm{v}_\infty$ for every $v\in\R^m$ by \eqref{eq:CB}; the claim follows by induction on
$j$. Finally, (iv) follows from (i) and (ii) 
applied to $R\circ P=\sum_ic_iP^i$,
each term of degree at most $i\deg(P)\le\deg(R)\deg(P)$.
\end{proof}

\begin{proof}[Proof of Theorem~\ref{thm:torus}]
We prescribe a bound for $\deg(\Delta_n)$. Set $D_0\=1$.
By \eqref{eq:CB},
$\norm{(B^{\mathsf T})^jk_r}_\infty\le C_B^j\norm{k_r}_\infty\le rC_B^j$ for
$0\le j<N$, so
\begin{equation*}
\norm{(B^{\mathsf T})^j k_r-(B^{\mathsf T})^\ell k_r}_\infty
\le \norm{(B^{\mathsf T})^j k_r}_\infty+\norm{(B^{\mathsf T})^\ell k_r}_\infty
\le 2rC_B^{N-1}
\end{equation*}
for $0\le\ell<j<N$, and \eqref{eq:fourier detector} gives
$\deg(K_{r,N})\le 2rC_B^{N-1}$, so $\deg(\cD_n)\le2(n+1)C_B^{2^{n+2}-1}$.
It follows from \eqref{eq:from Fn-1 to Hn} that
\begin{equation*}
\deg(H_n)\le\max\bigl\{
D_{n-1},\,2(n+1)C_B^{2^{n+2}-1}\bigr\}.
\end{equation*}
Using \eqref{eq:Bernstein degree}, \eqref{eq:recursion construction}, and Lemma~\ref{lem:T^m degree estimate}~(iv), define
\begin{equation}\label{eq:defi of toral D_n}
D_n\=d_nC_B^{M_n}\max\bigl\{
D_{n-1},\,2(n+1)C_B^{2^{n+2}-1}\bigr\}.
\end{equation}
Then $\deg(F_n)\le D_n$ and $D_n\ge D_{n-1}$, so that
$\deg(\Delta_n)\le \max\{\deg (F_{n}),\, \deg (F_{n-1})\} \le D_n$. We therefore set $e_n\=D_n$.

For $R\in\N_0$, set
\begin{equation*}
L(R)\=(2R+1)^{m}-1=
\#\bigl\{\ell\in\Z^m:\ell\neq0,\, \norm{\ell}_\infty\le R\bigr\}.
\end{equation*}
We define the integers $J_{n,s}$ and the frequency bounds $R_{n,s}$
recursively in the lexicographic order of the pairs $(n,s)$, starting
with $R_{1,1}\=0$ and maintaining the inductive invariant
\begin{equation}
\label{eq:torus invariant}
\bigcup_{(a,b)<(n,s)}(B^{\mathsf T})^{J_{a,b}}E_a
\subseteq \bigl\{\ell\in\Z^m:0<\norm{\ell}_\infty\le R_{n,s}\bigr\}.
\end{equation}
By Lemma~\ref{lem:distinct character}, for all non-zero $k$,
$\ell\in\Z^m$ there is at most one $J\in\N_0$ with
$(B^{\mathsf T})^Jk=\ell$. Hence at most $L(e_n)L(R_{n,s})$ integers
$J\in\N_0$ fail the condition
\begin{equation}
\label{eq:torus admissible}
\norm{(B^{\mathsf T})^{J}k}_\infty>R_{n,s}
\quad\text{for all }k\in\Z^m\smallsetminus\{0\}
\text{ with }\norm{k}_\infty\le e_n,
\end{equation}
so \eqref{eq:torus admissible} holds for at least one
$J\in\{0,\,1,\,\ldots,\,L(e_n)L(R_{n,s})\}$; we let $J_{n,s}$ be the least such
$J$. Condition \eqref{eq:torus admissible} is decided by finitely many
products of integer matrices and vectors, so $J_{n,s}$ is the exact
output of a finite computation,
therefore Proposition~\ref{p:exact coefficients} applies.

The Fourier support of $\Delta_n\circ T_B^{J_{n,s}}$ is contained in
$\bigl\{(B^{\mathsf T})^{J_{n,s}}k:0<\norm{k}_\infty\le e_n\bigr\}$,
whose elements have $\ell^\infty$-norm greater than $R_{n,s}$ by
\eqref{eq:torus admissible} and at most $C_B^{J_{n,s}}e_n$ by
Lemma~\ref{lem:T^m degree estimate}~(iii). Setting
\begin{equation*}
R_{n,s+1}\=\max\bigl\{R_{n,s},\,C_B^{L(e_n)L(R_{n,s})}e_n\bigr\},
\qquad R_{n+1,1}\=R_{n,K_n+1},
\end{equation*}
the invariant \eqref{eq:torus invariant} is preserved, and the relocated
blocks are pairwise disjoint and occur in the lexicographic order of
$(n,s)$ with respect to the enumeration of $\Z^m\smallsetminus\{0\}$
fixed above. Thus \ref{B1} and \ref{B2} hold.

That enumeration lists the frequencies in order of increasing
$\ell^\infty$-norm, so for each $R\in\N_0$ the square partial sum
$\Sigma_R\widetilde F$ of \eqref{eq:T^d partial sum} coincides with
$\sum_{r=1}^{L(R)}c_r\gamma_r$. The square partial sums are therefore a
subsequence of the partial sums in the prescribed enumeration, and
Proposition~\ref{p:abstract uniform fourier} gives their uniform
convergence. With Proposition~\ref{p:tilde F exposure}, this proves
Theorem~\ref{thm:torus}.
\end{proof}


\begin{thebibliography}{PER89}

\bibitem[Boc18]{Boc18}
\textsc{Bochi,~J.},
Ergodic optimization of Birkhoff averages and Lyapunov exponents.
In \textit{Proceedings of the International Congress of Mathematicians---Rio de Janeiro 2018,
Vol.~III}, World Scientific, 2018, 1843--1866.

\bibitem[Bou00]{Bou00}
		\textsc{Bousch,~T.}, 
		Le poisson n'a pas d'ar\^{e}tes.
		\textit{Ann.\ Inst.\ Henri Poincar\'{e} Probab.\ Stat.} {\bf 36} (2000), 489--508.

\bibitem[Bou01]{Bou01}
\textsc{Bousch,~T.}, 
La condition de Walters.
\textit{Ann.\ Sci.\ \'Ec.\ Norm.\ Sup\'er.\ (4)} {\bf 34} (2001), 287--311.

\bibitem[Bou11]{Bou11}
        \textsc{Bousch,~T.}, 
        Le lemme de Ma\~n\'e-Conze-Guivarc’h pour les syst\`emes amphidynamiques rectifiables.
        \textit{Ann.\ Fac.\ Sci.\ Toulouse Math.\ (6)} {\bf 20} (2011), 1--14.

\bibitem[CLT01]{CLT01}
		\textsc{Contreras,~G.}, \textsc{Lopes,~A.O.}, and \textsc{Thieullen,~Ph.},
		Lyapunov minimizing measures for expanding maps of the circle.
		\textit{Ergodic Theory Dynam.\ System} {\bf 21} (2001), 1379--1409.

        \bibitem[CG95]{CG95}
        \textsc{Conze,~J.-P.} and \textsc{Guivarc'h,~Y.},
        Croissance des sommes ergodiques et principe variationnel.
        \textit{unpublished manuscript}, circa 1995.


\bibitem[Fin49]{Fin49}
\textsc{Fine,~N.~J.},
On the Walsh functions.
\textit{Trans.\ Amer.\ Math.\ Soc.} {\bf 65} (1949), 372--414.

\bibitem[Fol16]{Fol16}
\textsc{Folland,~G.~B.},
\emph{A Course in Abstract Harmonic Analysis},
2nd ed.,
Textbooks in Mathematics,
CRC Press, Boca Raton, FL, 2016.

\bibitem[Gra14]{Gra14}
\textsc{Grafakos,~L.},
\emph{Classical Fourier Analysis},
3rd ed.,
Grad.\ Texts in Math.\ 249,
Springer, New York, 2014.

\bibitem[Hal43]{Hal43}
\textsc{Halmos,~P.~R.},
On automorphisms of compact groups.
\textit{Bull. Amer. Math. Soc.} {\bf 49} (1943), 619--624.

\bibitem[Je06a]{Je06a}
\textsc{Jenkinson,~O.},
Ergodic optimization.
\textit{Discrete Contin.\ Dyn.\ Syst.} {\bf 15} (2006), 197--224.

\bibitem[Je06b]{Je06b}
\textsc{Jenkinson,~O.},
Every ergodic measure is uniquely maximizing.
\textit{Discrete Contin.\ Dyn.\ Syst.} {\bf 16} (2006), 383--392.

\bibitem[LT03]{LT03}
        \textsc{Lopes,~A.O.} and \textsc{Thieullen,~P.},
        Sub-actions for Anosov diffeomorphisms.
        \textit{Ast\'{e}risque} {\bf 287} (2003), 135--146.

\bibitem[PER89]{PER89}
\textsc{Pour-El,~M.~B.} and \textsc{Richards,~J.~I.},
\emph{Computability in Analysis and Physics}.
Perspect.\ Math.\ Logic, Springer, Berlin, 1989.

\bibitem[Roh49]{Roh49}
\textsc{Rohlin,~V.~A.},
On endomorphisms of compact commutative groups. (Russian)
\textit{Izv. Akad. Nauk SSSR Ser. Mat.}
{\bf 13} (1949), 329--340.

 \bibitem[Sa99]{Sa99} 
        \textsc{Savchenko,~S.V.},
        Homological inequalities for finite topological Markov chains (Russian). 
        \textit{Funktsional.\ Anal.\ i Prilozhen} {\bf 33} (1999), 91--93; translation in
        \textit{Funct.\ Anal.\ Appl.} {\bf 33} (1999), 236--238.

\bibitem[SR88]{SR88}
\textsc{Shirvani,~M.} and \textsc{Rogers,~T.~D.},
Ergodic endomorphisms of compact abelian groups.
\textit{Comm.\ Math.\ Phys.} {\bf 118} (1988), 401--410.


\bibitem[Wa78]{Wa78}
        \textsc{Walters,~P.},
        Invariant measures and equilibrium states for some mappings which expand distances.
        \textit{Trans.\ Amer.\ Math.\ Soc.} {\bf 236} (1978), 121--153.

\end{thebibliography}
\end{document}